\documentclass{amsart}

\usepackage[all]{xy}

\usepackage{mathrsfs,graphicx,latexsym,tikz,color,euscript}
\usepackage{amsfonts,amssymb,amsmath,amscd,amsthm, enumerate}
\usepackage{graphicx}
\usepackage{epstopdf}
\usepackage{hyperref}
\usepackage{upgreek}
\usepackage{verbatim}

% by Richard   
\def\R{\mathbb R}

\def\C{\mathbb C}
\def \E {\mathbb E}
\def \V {\mathbb V}

\def\N {\mathbb N}

\def\w{\omega}

\def \nbhd {neighborhood }

\def\into{\rightarrow}

\def\metric#1{\langle #1 \rangle}
\def\metrictwo#1{\langle\!\langle #1 \rangle\!\rangle}
\def\normtwo#1{\| #1 \|}
\def\m#1{\begin{bmatrix}#1\end{bmatrix}}

% by Guowei 
\newcommand{\al}{\alpha}

\newcommand{\g}{\gamma}
\newcommand{\s}{\sigma}

\newcommand{\tht}{\theta}
\newcommand{\lmd}{\lambda}

\newcommand{\Eh}{\hat{\mathbb{E}}}

\newcommand{\Sh}{\hat{S}}
\newcommand{\CE}{\mathcal{E}}
\newcommand{\CEh}{\hat{\mathcal{E}}}
\newcommand{\CH}{\mathcal{H}}
\newcommand{\CU}{\mathcal{U}}
\newcommand{\CV}{\mathcal{V}}
\newcommand{\CHS}{\mathcal{HS}}
\newcommand{\CS}{\mathcal{S}}

\newcommand{\nbtl}{\tilde{\nabla}}

\newtheorem{theorem}{Theorem}[section]
\newtheorem*{theorem*}{Theorem}
\newtheorem{proposition}{Proposition}[section]

\newtheorem{lemma}{Lemma}[section]
\newtheorem{definition}{Definition}[section]
\newtheorem{remark}{Remark}[section]
\newtheorem{example}{Example}[section]

\newtheorem{problem}{Problem}

\DeclareMathOperator{\diag}{diag}
\DeclareMathOperator{\sech}{sech}

\begin{document}

\title[Scattering  $n$ bodies]{Chazy-Type Asymptotics and Hyperbolic Scattering for the $n$-Body Problem}

\author[N. Duignan]{Nathan Duignan} 
\address{School of Mathematics and Statistics, University of Sydney, Camperdown, 2006 NSW, Australia}
\email{nathan.duignan@sydney.edu.au}

\author[R. Moeckel]{Richard Moeckel}
\address{School of Mathematics\\ University of Minnesota\\ Minneapolis MN 55455}
\email{rick@math.umn.edu}

\author[R. Montgomery]{Richard Montgomery}
\address{Mathematics Department\\ University of California, Santa Cruz\\
Santa Cruz CA 95064}
\email{rmont@ucsc.edu}

\author[G. Yu]{Guowei Yu} 
\address{Chern Institute of Mathematics and LPMC\\ Nankai University\\ Tianjin, China, 300071}
\email{yugw@nankai.edu.cn}

%\date{October 13, 2019}

\begin{abstract}  
We study solutions of the Newtonian $n$-body problem which tend to infinity hyperbolically, that is, all mutual distances tend to infinity with nonzero speed as $t\into +\infty$ or as $t\into -\infty$.  In suitable coordinates, such solutions form the stable or unstable manifolds of normally hyperbolic equilibrium points in a boundary manifold ``at infinity''.   We show that the flow near these manifolds can be analytically linearized and use this to give a new proof of Chazy's classical asymptotic formulas.  We also address the scattering problem, namely, for solutions which are hyperbolic in both forward and backward time, how are the limiting equilibrium points related?  After proving some basic theorems about this scattering relation, we use perturbations of our manifold at infinity to study scattering ``near infinity'', that is, when the bodies stay far apart and interact only weakly. 
\end{abstract}

\maketitle

\section{Introduction and Overview} 

Let $q(t)=(q_1(t),q_2(t),\ldots, q_n(t))\in \E,\, t\in\R$ be a solution of the
Newtonian $n$-body problem. Here $\E$ denotes the n-body configuration space
with center of mass fixed at the origin.  If $q(t)$ has positive energy $h$,  then it is unbounded: some of its interbody distances will  tend  to infinity with time.  Following Chazy \cite{Chazy},  we call   a solution  ``hyperbolic''  if {\it all}   its interbody distances tend to infinity with time, and do so asymptotically linearly. Chazy \cite[eq.~(27)]{Chazy} established the first few terms of a convergent asymptotic expansion for  hyperbolic solutions, 
$$q(t) = At + B\log |t| + C +\cdots.$$ 
One can show that $B= B(A)$.  The coefficients $A, C\in\E$ determine the entire series and  can be thought of as initial conditions at infinity.  We will call these two coefficients  the scattering parameters.  The leading coefficient $A$ represents  the asymptotic velocities of the bodies. The  mass weighted norm of $A$ 
satisfies $\normtwo{A}^2=2h$.  The normalized vector $A/\normtwo{A}$ represents the limiting shape of the configuration $q(t)$.  The interpretation of the coefficient $C$ is less straightforward.  
  
Bi-hyperbolic solutions, solutions that are hyperbolic in both time directions, sweep out an open subset of the phase space and allow us to define a  {\em scattering map}  
$$F: (A, C) \mapsto (A', C')$$
which sends the  scattering parameters associated to the infinite past  of  a solution to its  scattering parameters associated to   the  infinite future.
We then say that    $A$ and $A'$ are related by hyperbolic scattering and write $A\rightarrow A'$.  In other words, $A\rightarrow A'$ means that there is some bi-hyperbolic orbit with asymptotic behavior described by $A$ in the past and by $A'$ in the future. 
{\it The main problem of hyperbolic scattering can now be stated :  For a given scattering parameter $A$ at time minus infinity, which parameters $A'$ at time infinity are related to it  by hyperbolic scattering ?}   Since the energy is constant along solutions,   we must have $\normtwo{A} =\normtwo{A'}$, so 
this question  is really a question of connecting the  two asymptotic shapes $A/ \|A\|$ and $A' /\| A' \|$ .  

For example, consider the planar two-body problem with the center of mass $m_1 q_1 + m_2 q_2$ fixed at the origin.  Then the vector ${\vec r} (t) = q_2(t)-q_1(t)$ connecting the two-bodies satisfies the Kepler problem and for positive energies, it sweeps out a hyperbola in the plane (see Figure~\ref{fig:hyperbola}).  Suppose the hyperbola has semimajor axis $a>0$ and eccentricity $e> 1$.  Then  the  Chazy parameters $A, A'$
for this orbit  are vectors with equal length $\sqrt{2h}$ pointing along the asymptotes of the hyperbola as in Figure~\ref{fig:hyperbola}. 
($\E$ can be  parameterized by  $q_2 -q_2$.)  The vectors $C, C'$ are orthogonal with equal lengths proportional to $a\sqrt{e^2-1}$ (see Example~\ref{ex_Kepler} below).  Together, $A,C$ uniquely determine the hyperbola and then $(A',C')=F(A,C)$ are also uniquely determined.  The asymptotic shape vectors $A/\normtwo{A}$ and $A'/\normtwo{A'}$   encode the incoming and outgoing directions and the main problem reduces to:  For a fixed incoming direction, which outgoing directions are possible ?  A little thought about hyperbolas shows that the outgoing unit vector can be anything other than minus the incoming one.  To put it another way, all possible ``scattering angles'' are possible, except $\pi$.  Of course,  the situation for the $n$-body problem with $n>2$ is much more complicated.

\begin{figure}[h]
\scalebox{0.4}{\includegraphics{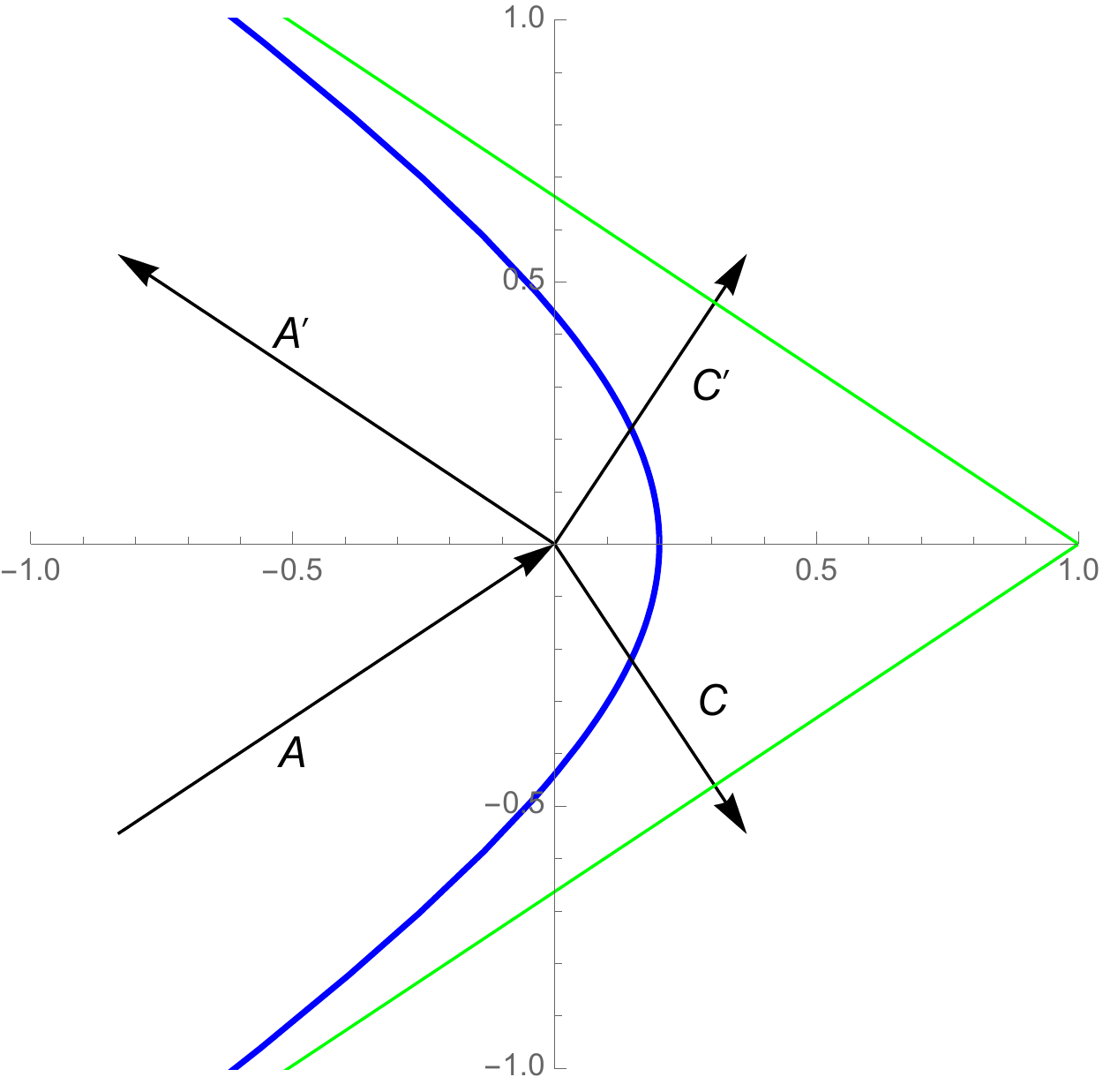}}
\caption{Hyperbolic Scattering for the Kepler Problem}
\label{fig:hyperbola} 
\end{figure}

We approach scattering by constructing a kind of McGehee regularization 
which partially compactifies $ n $-body phase space by adding a boundary at infinity.  Let $r$ be the square root of the moment of inertia of the bodies with respect to its center of mass, which represents the overall size of the configuration.  Then the boundary at infinity is given by $r=+\infty$ or equivalently, $\rho: = 1/r = 0$.
The regularized  dynamics extends analytically to this infinity manifold.  
The infinity manifold is an invariant manifold for the extended flow and contains a submanifold  $\hat{\CE}$ of 
equilibrium points  which gives the escaping scattering solutions a place to go.  (The hat over $\CE$   indicates
that we have removed collisions from a slightly larger  equilibrium set  $\CE$ at infinity.) 
$\CEh$ is normally hyperbolic, has half the dimension of the phase space and falls into two components, $\CEh_-$ and $\CEh_+$, one for the infinite past, the other for the infinite future, with $\CEh_-$ repelling and $\CEh_+$ attracting. Every future-hyperbolic solution has for its $\omega$-limit set a single point of $\CEh_+$.  
Every past-hyperbolic solution has for its $\alpha$-limit set a single point of $\CEh_-$.  These sets of restpoints  $\CEh_{\pm}$ can be  identified with the configuration space $\hat\E$ where the hat symbol means that we have deleted the collision configurations.  With this identification, specifying a restpoint $p\in\CEh_-$ is equivalent to specifying the Chazy parameter $A$ at time minus infinity. and similarly $q\in\CEh_+$ corresponds to a Chazy parameter $A'$ at time infinity.  From this point of view, the scattering parameters parameters $C,C'$ specify particular orbits in the stable and unstable manifolds of the restpoints determined by $A,A'$.
For the blown-up flow, the bi-hyperbolic solutions are exactly the heteroclinic orbits  connecting
$\CEh_-$ to $\CEh_+$.    If $p \in \CEh_-$ and $q \in \CEh_+$ are connected by a bi-hyperbolic orbit we will write $p \rightarrow q$  which is equivalent to saying $A\rightarrow A'$ for the corresponding Chazy parameters, as above.  

By the {\it image} of $A$ under the scattering relation we will mean the set  $\CHS(A) = \{A': A \rightarrow A'  \}$.  
Then the main problem of scattering can be stated as: ``what is the nature of this image set?''.   Besides preserving the energy levels, there seem to be no other obvious restrictions and indeed, for $n=2$, almost all energetically possible scatterings do occur.  Nevertheless, for $n\ge 3$,  it is challenging to prove anything at all.

Here is an outline of the paper. Section 2 sets up the equations of motion and introduces the blown-up coordinates at infinity.   In Section 3 we study the resulting flow on the invariant manifold at infinity.   At the end of section 3 we  formulate our key  analytic linearization result, Theorem~\ref{th_analyticlinearizationv1}, 
which is needed  in Section 4 to obtain a new proof of Chazy's asymptotic formulae.  We defer the proof of this analytic linearization result to 
the appendix where its proof  becomes a  corollary of the more general Theorem~\ref{th_analyticlinearization}. 
        
Section 5 forms the heart of our investigation.  There we define the scattering map and  the scattering relation and prove some of their basic properties.
Then  we focus on scattering near infinity, that is,  on the behavior of bi-hyperbolic solutions whose overall  size $r$ remains large throughout.   Our main result about the main problem shows that using heteroclinic orbits near infinity, we can connect almost every initial $A$ to  at least an open subset of the energetically possible $A'$:
\begin{theorem*}
For generic $A\in\hat \E$, the image $\CHS(A)$ has nonempty interior in the sphere $\{A':\normtwo{A'} = \normtwo{A}\}$.
\end{theorem*}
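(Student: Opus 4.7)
The plan is to work entirely in the near-infinity regime, where the Newtonian interaction acts as a small perturbation of free motion along straight lines. For $C$ deep in the unstable manifold of a generic $p(A)\in\CEh_-$, the orbit stays far from all collisions for all time, is automatically bi-hyperbolic, and the associated scattering map $\Phi_A:C\mapsto A'$ can be expanded perturbatively around the straight-line background. I expect its image to have nonempty interior on the energy sphere precisely because the first-order deflection depends in a nondegenerate way on the impact parameters encoded in $C$.

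First, I would use Theorem~\ref{th_analyticlinearizationv1} to analytically parameterize the unstable manifold $W^u(p(A))$ by $C\in\hat\E$, with orbits enjoying the Chazy expansion $q(t) = At + B(A)\log|t| + C + \cdots$ as $t\to-\infty$. Take $A$ in the open dense subset of $\hat\E$ where the asymptotic velocities $v_1,\dots,v_n$ are pairwise distinct and the corresponding straight-line motion is collision-free for all $t$; this is the ``generic'' condition. For $|C|$ sufficiently large the interbody distances grow linearly in $|t|$, $r(t)$ stays large throughout, the orbit avoids the collision set, and the symmetric analysis at $\CEh_+$ yields an outgoing point $A'=\Phi_A(C)\in\CEh_+$ with $\Phi_A(C)\to A$ as $|C|\to\infty$.

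Second, I would expand $\Phi_A$ in inverse powers of $|C|$. The leading-order deflection of body $i$ is a Born-type integral along the straight-line background,
\[
v'_i - v_i \;=\; -\sum_{j\ne i} m_j \int_{-\infty}^{\infty}\frac{(v_i-v_j)t+(c_i-c_j)}{\|(v_i-v_j)t+(c_i-c_j)\|^{3}}\,dt,
\]
which admits a closed-form evaluation and depends smoothly on the components of $C$ transverse to the relative velocities $v_i-v_j$. It then suffices to exhibit one value of $C$ at which $D\Phi_A$ has maximal rank $\dim\hat\E - 1$: the implicit function theorem then produces an open neighborhood of $\Phi_A(C)$ in $\{A':\|A'\|=\|A\|\}$ lying inside $\CHS(A)$.

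The principal obstacle is this rank calculation. Overall time translation along $A$ supplies a trivial one-dimensional kernel of $D\Phi_A$, so the remaining $\dim\hat\E - 1$ transverse variations of $C$ must produce linearly independent deflections spanning the tangent to the energy sphere at $A$. A natural route is a localization argument: choose $C$ so that one pair $(i,j)$ achieves a much closer approach than any other, reducing the dominant contribution to $\Phi_A$ to a single pairwise Kepler scattering whose deflection direction can be varied over a full family of impact parameters as in the two-body example of the introduction; iterating over pairs and superposing the resulting first-order directions should span the tangent sphere except possibly on a proper analytic subvariety of $\hat\E$. Identifying this exceptional locus and verifying the spanning property on its complement is where I expect the main technical effort to lie.
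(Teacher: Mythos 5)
Your geometric strategy --- perturbing bi-hyperbolic orbits in the near-infinity regime, computing a first-order (Born-type) deflection integral, and invoking the implicit function theorem at a point where the derivative of the scattering map has maximal rank --- is exactly the paper's strategy, and your formula for the first-order deflection is equivalent to the paper's Lemma~\ref{lemma_scatteringformula} after the change of time variable from $t$ to $\theta$ (your $|C|\to\infty$ limit corresponds to the paper's $\rho_1\to 0^+$).

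The genuine gap is the rank computation, which you flag as ``the principal obstacle'' and then dispatch with a heuristic. Your localization argument --- arrange one pair $(i,j)$ to have a much closer approach than all others, treat its contribution as a Kepler two-body scattering, then ``iterate over pairs and superpose'' --- does not establish that $D\Phi_A$ has rank $D-1$, for two reasons. First, linear independence is not additive: it is not obvious, and you do not argue, that the union of the first-order deflection directions produced by the various pairs spans the $(D-1)$-dimensional tangent space; for collinear or degenerate shapes they visibly do not. Second, the variation is over the transverse components of $C$, which move all the bodies simultaneously, so even for a single dominating pair the ``full family of impact parameters'' you want to sweep out is entangled with the impact parameters of the other pairs. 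The paper instead makes the specific choice $\eta = \xi^\perp$ for a planar non-collinear $s_0$, so that the reference half-circle at infinity is a rigid rotation; the integral of the Hessian $D\nabla U$ then has a closed form $\bar D$, and the rank statement becomes Lemma~\ref{lem: xi nondeg}: the kernel of $\bar D$ is exactly the three-dimensional span of $\xi$ and the two translations. That lemma is the technical core of the proof, and proving it is a non-trivial geometric argument about triangles being similar; nothing in your sketch substitutes for it. The paper then handles non-planar $s_0$ by an analyticity argument (Theorem~\ref{thm:generic}), replacing your ``pairwise distinct asymptotic velocities'' genericity (which only guarantees convergence of the Born integral, not nondegeneracy of its derivative) by the nonvanishing of an explicit analytic function built from the determinant of a restriction of $\bar D$.
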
  

{\sc Related Work.}  The literature on classical scattering is vast.  See the somewhat encyclopaedic \cite{Dz}.  However, there is very little rigorous work
that we are aware of on global properties of Newtonian N-body scattering for $N> 2$.  Chazy \cite{Chazy} is the seminal piece.
Quite recently Maderna and Venturelli  \cite{Maderna} posted  an elegant variationally  inspired article which is  closely related
but  complementary to this present work. 
Their main result is that given any asymptotic shape,
an ``$s = A/\|A\|$'' in Chazy language, and any configuration $q_0 \in \E$, there is a hyperbolic orbit passing through $q_0$ at
time $t =0$ and whose asymptotic shape is $s$ as $t \to +\infty$.
In other words, instead of trying to connect a given state at $t = - \infty$ to one at $t = + \infty$ they
 connecting finite states at $t=0$ to states at $t = + \infty$. 
  
{ \sc Acknowdledgements.}   
This work was started at MSRI during their semester on Hamiltonian Systems in the Fall of 2019 and all of us would like to acknowledge their support and the wonderful working environment at MSRI.   
We would like  to acknowledge useful discussions with other MSRI members attending a seminar on scattering at MSRI, notably Tere M. Seara, Amadeu Delshams, Jim Meiss, and Pau Martin.  
Montgomery would like to acknowledge useful  discussions with Andreas Knauf and
Maciej Zworski. 
Moeckel would like to acknowledge support from NSF grant DMS-1712656, NCTS in Hsinchu, Taiwan, Universit\'e Paris-Dauphine and IMCCE, Paris.
Guowei Yu thanks useful discussion with Xijun Hu and Yuwei Ou, and the support of Nankai Zhide Foundation. Nathan Duignan thanks the support of the Australian government's Endeavour Fellowship.

\section{Change of variables extending the flow to infinity} 

Let $m_i>0, q_i\in\R^d$ and $\xi_i\in\R^d$ denote the masses, positions and velocities of the bodies and let $ q=(q_i)_{i=1}^{n}\in \R^{nd }$ and $\xi =(\xi_i)_{i=1}^{n} \in\R^{nd}$.  The Newtonian potential function (the negative potential energy) is 
$$U(q)= \sum_{1 \le i < j \le n} \frac{m_i m_j}{|q_i-q_j|}$$
and the kinetic energy is 
$$K(\xi) = \frac12 \sum_{i=1}^n m_i|\xi_i|^2 = \frac12 \metric{\xi,M \xi}$$
where $M = \diag (m_1,\ldots,m_1,m_2,\ldots,m_2, \ldots, m_n,\ldots,m_n)$ is the $nd \times nd$ mass matrix with $d$ copies of each mass along the diagonal and $\metric{\cdot , \cdot }$ is the Euclidean metric on $\R^{nd}$.

Introduce the \emph{mass inner product}, also called the  \emph{kinetic energy inner product}:
\begin{equation}
\label{eq_metrictwo}  \metrictwo{v,w} = \metric{v,Mw}.
\end{equation}
If we write the  norm  of a vector $v$ relative to this inner product  by  $\normtwo v ^2$
then  $K(\xi) = \frac12\normtwo{\xi}^2$ and 
$\normtwo{q}^2$ is the moment of inertia with respect to the origin.

 Newton's equations can be written  
   $$\ddot q = M^{-1}\nabla_{\text{euc}} U(q) = \nabla U(q)$$
where $\nabla_{\text{euc}}$ is the Euclidean gradient and $\nabla = M^{-1} \nabla_{\text{euc}}$ is the gradient with respect to
the mass inner product.  
Newton's equations preserve the total energy
$$H(q,\xi) = K(\xi)-U(q) = h,$$
which we assume to be positive throughout the paper. 

Without loss of generality, we may fix  the center of mass at the origin, and insist that the total linear momentum $\Sigma_{i=1}^n m_i \xi_i$  is zero.   
Then $q, \xi \in \E$ where
\begin{equation}
\label{eq_E} \E=\{q\in\R^{nd}: \sum_{i=1}^n m_iq_i=0\}
\end{equation}
This $\E$ is a subspace of $\R^{nd}$ of dimension $D=d(n-1)$. Denote the collision set by  
$$\Delta = \{q: q_i=q_j\text{ for some }i\ne j\}.$$ 
Then the configuration space for the n-body problem is $\Eh = \E\setminus \Delta$ and its phase space is the tangent bundle $T\Eh =  \Eh \times \E$.

Set $S = \{q \in \E:  \|q \| =1 \}$ and introduce   spherical variables $ (r, s) \in (0 , +\infty]\times S$ on $\E$ according to: 
$$q = r s,  \text{ where } r = \|q \|,  \;\; s=q/r \in S ,$$
where $S$ is  a sphere of dimension $D-1$. Set $\Sh : = S \setminus \Delta$.  

Decompose  the velocity vector $\xi := \dot q$   as
$$\xi  = vs +w\qquad \text{ where } v= \metrictwo{s,\xi }\text{ and }\metrictwo{s,w}=0,$$
so that $v\in\R$ is the  radial velocity  component while $w\in\E$ is the tangential velocity.

Make the change of independent (time) variable
\begin{equation}
\label{eq_tau} d t = r d \tau,
\end{equation}
writing $' = \frac{d}{d \tau}$.
Then   Newton's equations become
\begin{equation}\label{eq_oder}
 \begin{aligned}
 r'&=v r\\
 s'&=w\\
 v' &= \normtwo{w}^2 - \frac1r U(s) \\
 w'&= \frac1r \nabla U(s)+\frac1r U(s)s -vw-\normtwo{w}^2s = \frac1r \tilde\nabla U(s)-vw-\normtwo{w}^2s
 \end{aligned}
 \end{equation}
 where 
 \begin{equation*}
 \label{eq_tilnabU}  \tilde\nabla U(s) = \nabla U(s) + U(s)s
 \end{equation*}
 is the tangential component of $\nabla U(s)$, since $U$  is homogeneous of degree $-1$.
 The energy equation becomes
 \begin{equation}
 \label{eq_energy}  \frac12 v^2+\frac12 \normtwo{w}^2 - \frac1r U(s) = h.
 \end{equation}
 The changes of variables and timescale leading to (\ref{eq_oder}) are a variation on the McGehee blow-up method \cite{McGeheeTriple, McGeheeICM} first introduced for studying triple collision.  In that case, the change of timescale was $d t = r^{\frac 32} d \tau$.  The factor of $r$ here is more appropriate for studying hyperbolic orbits near infinity.
 
As is well-known, any nonsingular solution with $h > 0$ must tend to infinity:
\begin{proposition}\label{prop_infinity}
Let $q(t)$ be a collision-free solution of the $ N $-body problem, with energy $h >0$,  defined for all $t\in [0,+\infty)$. Let $\tau=\tau(t)$ be the new time parameter defined as \eqref{eq_tau} with 
$$ \tau(0)=0, \;\; \tau_0= \lim_{t \to +\infty} \tau(t). $$
Then $r(t)\into+\infty$ as $t\into+\infty$ and $r(\tau)\into+\infty$ as $\tau\into\tau_0$.    Moreover, if $U(q(t))$ is bounded, then $\tau_0=+\infty$.  Similarly for solutions defined for all  $t\in(-\infty,0]$.
\end{proposition}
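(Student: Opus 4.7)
The plan is to deduce everything from the Lagrange--Jacobi identity applied to the moment of inertia $I(t) = r(t)^2 = \normtwo{q(t)}^2$. The key computation, using $\ddot q = \nabla U(q)$ together with Euler's identity $\metrictwo{q,\nabla U(q)} = -U(q)$ (since $U$ is homogeneous of degree $-1$), gives
\[
 \ddot I = 2\normtwo{\dot q}^2 + 2\metrictwo{q,\ddot q} = 4K - 2U = 4h + 2U(q).
\]
Because $U>0$ off the collision set and $h>0$ by hypothesis, this yields the coarse bound $\ddot I \ge 4h > 0$ along the orbit. Integrating twice produces $I(t) \ge 2h t^2 + \dot I(0)\,t + I(0)$, hence $r(t) = \sqrt{I(t)} \into +\infty$ as $t\into +\infty$, which is the first assertion.

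For the statement in the new time variable, I would observe that $r>0$ on $[0,+\infty)$, so $\tau(t) = \int_0^t ds/r(s)$ is strictly increasing and is a homeomorphism of $[0,+\infty)$ onto $[0,\tau_0)$, whether $\tau_0$ is finite or infinite. Pushing $\lim_{t\to+\infty} r(t) = +\infty$ through this homeomorphism immediately gives $r(\tau) \into +\infty$ as $\tau \into \tau_0$.

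For the last claim, I would bring in the energy equation $\tfrac12\normtwo{\dot q}^2 = h + U(q)$. If $U$ is bounded above by $M$ along the orbit, then $\normtwo{\dot q} \le \sqrt{2(h+M)}$, and integrating gives the linear upper bound
\[
 r(t) \le r(0) + t\sqrt{2(h+M)}.
\]
Plugging this into $\tau_0 = \int_0^\infty dt/r(t)$ forces the integral to diverge, so $\tau_0 = +\infty$. The $t\in (-\infty,0]$ case is obtained verbatim by reversing time, since Newton's equations are time-reversible and the same Lagrange--Jacobi inequality holds.

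The argument is essentially classical and presents no real obstacle; the only point worth flagging is that the positive lower bound on $\ddot I$ comes entirely from the energy term $4h$, with no appeal to any lower bound on $U$. This matters because we are making no assumption that the orbit stays uniformly away from the collision set on $[0,+\infty)$; the hypothesis $U$-bounded is invoked only in the last step and only as an upper bound.
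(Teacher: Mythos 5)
Your proof is correct and follows essentially the same strategy as the paper's: the Lagrange--Jacobi identity $\ddot I = 4h + 2U(q)$ gives $\ddot I \geq 4h$, and then a linear upper bound on $r(t)$ together with the divergence of $\int_0^\infty dt/r(t)$ gives $\tau_0 = +\infty$. The only minor difference is in the upper bound on $r(t)$: you deduce $\normtwo{\dot q} \le \sqrt{2(h+M)}$ directly from the energy relation and integrate once, whereas the paper integrates the upper estimate $\tfrac12\ddot I \le 2h + K$ twice --- both yield $r(t) \le Ct$ and the conclusion is identical.
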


\begin{proof}
Let $I = r^2$ be the moment of inertia. Using the usual timescale we have
$$\frac 12 \ddot I = r\ddot r + \dot r^2=v'+v^2$$
where we used the fact that $\dot r = v$ and the definition of the new timescale.
Using the energy equation and the differential equation (\ref{eq_oder}) for $v'$   we derive
$$v' + v^2 =  2h +\frac1r U(s)=2h + U(q),$$
so that 
$$\frac12 \ddot I \ge 2h.$$
Integrating  twice with respect to $t$ gives
$$I(t) \ge 2h t^2 + kt +l$$
where $k = \dot I(0),l =I(0)$.  Since $h>0$ and $I(t)= r(t)^2$, we have $r(t)\into+\infty$ as $t\into+\infty$.  In fact
$$r(t) \ge ct$$
for $t$ sufficiently large, where $c>0$ is a suitable constant.

If we have an upper bound $U(q(t))\le K$ we get  upper estimates
$$\frac12 \ddot I \le 2h+K\qquad r(t) \le Ct$$
for $t$ sufficiently large and some $C>0$.
Then
$$\frac{d\tau}{dt} = \frac{1}{r(t)} \ge \frac{1}{Ct}$$
Hence $\tau(t)\into+\infty$ as $t\into+\infty$ and the rescaled solution exists for $\tau\in [0,+\infty)$.
 \end{proof}

For studying behavior near infinity, introduce
 $$ \rho = 1/ r.$$
which transforms  equations \eqref{eq_oder} into 
\begin{equation}\label{eq_odex}
 \begin{aligned}
 \rho'&=-v \rho\\
 s'&=w\\
 v' &= \normtwo{w}^2 -\rho U(s)\\
 w'&= \rho \tilde\nabla U(s)-vw-\normtwo{w}^2s
 \end{aligned}
\end{equation}
with energy equation
$$\frac12 v^2+\frac12 \normtwo{w}^2 -\rho U(s) = h.$$

Define  the infinity manifold to be 
$$\Sigma = \{(\rho,v,s,w): \rho = 0,\, v \in \R,\, s \in S,\, w \in \E,\, \metrictwo{s,w}=0,\, h>0 \}.$$
$\Sigma$ is an invariant manifold of \eqref{eq_odex} which represents the behavior of the $n$-body problem ``at infinity''.
Topologically,  $\Sigma$ is a manifold of dimension $2D-1$, diffeomorphic to $S \times (\R^D\setminus \{0\})$.  To see
its topology, use $\xi = w + v s \in \R^D$ as above, and note that when $\rho =0$ the positive energy is given by $h = \frac12 \normtwo{\xi}^2$.
Fixing this  energy to be the positive number  $h$  defines  a $(2D-2)$-dimensional submanifold $\Sigma_h \subset \Sigma$ which is the product of two
 $(D-1)$-spheres.   
 
The usual phase space $\hat \E \times \E$  is coordinatized by those $(\rho, s, v, w)$ for which $\rho > 0$
while $s \notin \Delta$ with the coordinatization being $(\rho, s, v,  w) \mapsto (q, \xi) = (\frac{1}{\rho} s, v s + w)$.
We will assiduously avoid collisions in our investigations here.    
To this end   set 
$$ \hat \Sigma :=\{ (0, s, v, w) \in \Sigma: \; s \in \hat S \}. $$

\begin{definition} We will refer to $(0, +\infty) \times \hat \Sigma$ as the ``usual'' or ``original'' phase space and  $[0, +\infty)\times \hat \Sigma $  as the ``blown-up'' or ``extended'' phase space. 
\end{definition}

\begin{remark}
	Extended phase space is a manifold with boundary,  this   boundary being  the infinity manifold $\hat{\Sigma}$. 
	Equations \eqref{eq_odex} make sense and are analytic for $\rho < 0$  as well, so we could    further extend  phase space to  $\R \times \hat{\Sigma}$. However $\rho < 0$ has no  physical meaning that we are aware of and this extension will rarely, if ever be  considered in what follows. 
\end{remark}

The idea of introducing an invariant manifold at infinity goes back to McGehee \cite{McGeheeStable, MoserStable, EastonMcGehee}.  It has been used many times since then, mainly in the negative or zero energy cases \cite{RobinsonHomoclinic, RobinsonSaari, MMS}, but also  in \cite{HuEtc} for the
positive energy case.

\section{Flow on and near the infinity manifold}
The differential equations on the infinity manifold $\Sigma$ arise by setting $\rho = 0$ in \eqref{eq_odex} and are 
\begin{equation}\label{eq_odeinfinity}
 \begin{aligned}
 \rho'&=0\\
 s'&=w\\
 v' &= \normtwo{w}^2 \\
 w'&= -vw-\normtwo{w}^2s
 \end{aligned}
\end{equation}
The energy equation with $\rho = 0$  becomes
\begin{equation}
\label{eq_energyinfty} v^2+ \normtwo{w}^2 = 2h.
\end{equation}
Notice that the potential $U(s)$ has completely disappeared from the equations.   $\Sigma$ contains submanifolds of equilibria $\CE = \CE_- \cup \CE_+$, where 
$$ \CE_{\pm} = \{(0, s, v, 0): \; s \in S,\, v = \pm \sqrt{2h},\, h>0 \}, $$
These equilibrium points are the   alpha (-)  and omega (+)  limit sets of hyperbolic  orbits.

{\sc  Free particles and the flow at infinity.} Since the flow at infinity is independent of the potential, we can understand it
by setting the potential  to zero, i.e., by considering  the motion of free particles.  
Then each body moves in a straight line at constant speed with respect to the usual timescale, so $q_i(t)= A_i t+ C_i$ where $A_i,C_i\in\R^d$ and
$m_1A_1+\ldots+ m_nA_n = m_1C_1+\ldots+  m_nC_n = 0$, 
or 
\begin{equation}\label{eq_freeparticle}
q(t) =  At+C
\end{equation}
where $A = \dot q \in \E$ is the constant velocity vector.  The vector $C$ is not unique, since a time translation  $t \mapsto t-t_0$ 
transforms  $C \mapsto C - t_0 A$.  Using such a translation  we may assume $\metrictwo{A,C}=0$.  The corresponding $C$ represents
the impact parameter: the closest point on the straight line to the origin. 
Now clearly (\ref{eq_odeinfinity}) is just a rescaled version of this free particle flow.

Consider the asymptotic behavior of a free particle solution \eqref{eq_freeparticle}
  as $t\into\pm\infty$ or equivalently $\tau\into\pm\infty$, but written out  in our ``McGehee'' coordinates.
We find  
$$(\rho(\tau),s(\tau),v(\tau),w(\tau))\into (0,\pm\frac{A}{\normtwo{A}}, \pm \normtwo{A},0)\qquad \tau\into\pm\infty.$$
Thus, in blown-up coordinates,  the free particle motion is  a heteroclinic orbit  connecting restpoint $p$ to restpoint $-p$ where 
$$p =  (0,-\frac{A}{\normtwo{A}},-\normtwo{A},0)\in\CE_-, \qquad -p =  (0,\frac{A}{\normtwo{A}},\normtwo{A},0)\in\CE_+.$$
When projected onto the sphere, $s(\tau)$ is  half of a great circle and connects the antipodal pair  $\pm \frac{A}{\normtwo{A}}$.
See Figure \ref{fig:infity}.

\begin{figure}[ht]
\scalebox{0.6}{\includegraphics{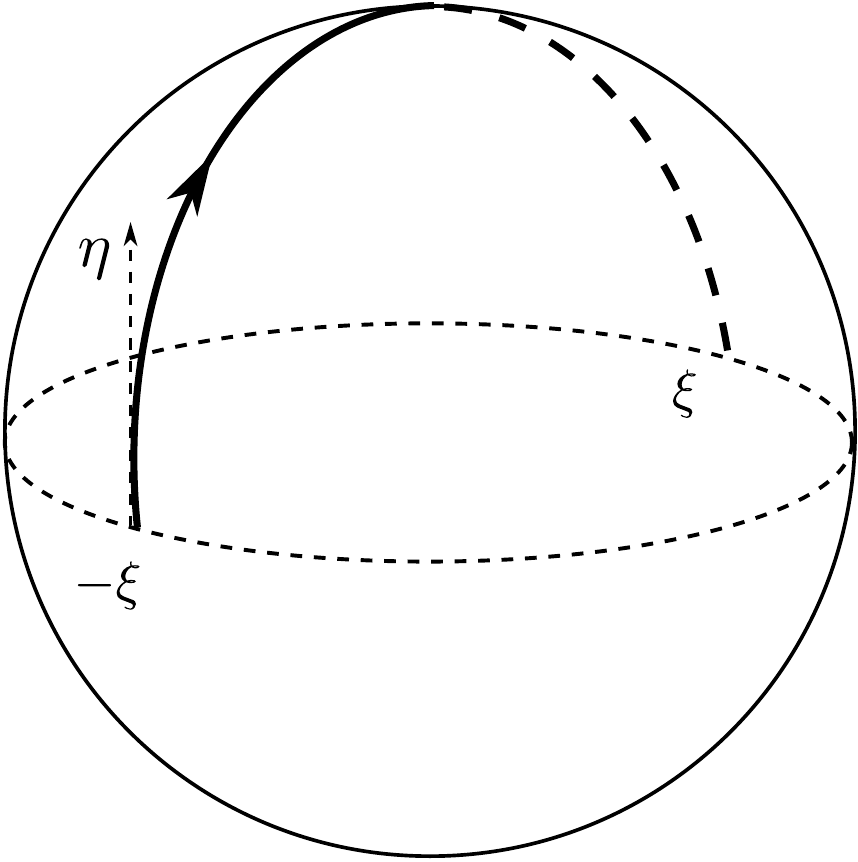}}
\caption{A half great circle solution}
\label{fig:infity} 
\end{figure}

The following proposition  reformulates  the facts just described.

\begin{proposition}\label{prop_infinityflow}
\label{prop: flow at infty} For any $\xi, \eta \in S$ satisfying $\metrictwo{\xi, \eta}=0$, and any $h > 0$,
equations \eqref{eq_odeinfinity} and \eqref{eq_energyinfty}  are solved by 
\begin{equation*}
\begin{aligned}
\rho(\tau) & = 0 \\
v(\tau) & = \sqrt{2h} \tanh(\sqrt{2h}\tau) \\
s(\tau) &= \xi \sin(\tht(\tau)) + \eta \cos (\tht(\tau)) \\
w(\tau) &= \sqrt{2h} \sech(\sqrt{2h}\tau) (\xi \cos(\tht(\tau)) -\eta\sin(\tht(\tau))),
\end{aligned}
\end{equation*}
where $\tht(\tau)= tan^{-1}(\sinh(\sqrt{2h}\tau)).$
This solution is    heteroclinic, connecting 
$$ p=(0, -\xi, -\sqrt{2h}, 0)  \in \CE_- \qquad \text{ to }   -p=(0, \xi, \sqrt{2h}, 0) \in \CE_+. $$
Up to  time shift $\tau \mapsto \tau + \tau_0$,  every solution to \eqref{eq_odeinfinity} and \eqref{eq_energyinfty} is of  this form.
\end{proposition}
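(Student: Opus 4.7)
The proof splits naturally into three parts: (i) verify the formula solves the system; (ii) compute the $\pm\infty$ limits; (iii) show every solution has this form up to a time shift. Parts (i) and (ii) are routine substitutions, so the substance is in part (iii).

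For part (i) I would simply plug the ansatz into \eqref{eq_odeinfinity}. Using $\theta'(\tau) = \sqrt{2h}\,\sech(\sqrt{2h}\tau)$ (the derivative of $\tan^{-1}\sinh$), one immediately sees $s' = w$, and $v' = 2h\,\sech^2(\sqrt{2h}\tau) = \normtwo{w}^2$. For $w'$, differentiate the expression for $w$, collect the $\tanh$ and $\sech$ factors, and use the identities $\sech^2 = 1 - \tanh^2$ and $\xi\sin\theta + \eta\cos\theta = s$ to match $-vw - \normtwo{w}^2 s$. The energy identity $v^2 + \normtwo{w}^2 = 2h$ is immediate from $\tanh^2 + \sech^2 = 1$. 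For part (ii), $\tanh(\sqrt{2h}\tau) \to \pm 1$ and $\sech(\sqrt{2h}\tau) \to 0$ as $\tau \to \pm\infty$, while $\theta(\tau) \to \pm \pi/2$, so $s(\tau) \to \pm\xi$ and $w(\tau) \to 0$, giving the claimed heteroclinic connection between $p$ and $-p$.

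For part (iii), I would first observe that the equations on $\Sigma$ are invariant under the orthogonal group acting on $\E$, and that both $s'$ and $w'$ lie in the span of $\{s,w\}$. Hence the two-plane $V = \mathrm{span}(s(\tau),w(\tau)) \subset \E$ is independent of $\tau$ (away from the degenerate case $w \equiv 0$, which gives the restpoints themselves). Pick an orthonormal basis $\{\xi,\eta\}$ of $V$ for the mass inner product and write
\begin{equation*}
s(\tau) = \xi \sin\theta(\tau) + \eta \cos\theta(\tau),
\end{equation*}
so that $s' = w$ forces $w = \theta'(\xi\cos\theta - \eta\sin\theta)$ and $\normtwo{w} = |\theta'|$. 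The energy relation then becomes the scalar ODE
\begin{equation*}
v' = \normtwo{w}^2 = 2h - v^2,
\end{equation*}
whose solutions are exactly $v(\tau) = \sqrt{2h}\tanh(\sqrt{2h}(\tau - \tau_0))$ (the constant solutions $v = \pm\sqrt{2h}$ correspond to $w \equiv 0$ and are excluded by $w \not\equiv 0$). This fixes $(\theta')^2 = 2h\,\sech^2(\sqrt{2h}(\tau - \tau_0))$; a choice of sign for $\theta'$ (absorbed into a relabelling $\eta \leftrightarrow -\eta$) together with the initial value of $\theta$ (absorbed into the choice of basis $\{\xi,\eta\}$) then reduces the solution to the stated form after shifting $\tau$ by $\tau_0$.

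The main obstacle I anticipate is making the uniqueness argument in part (iii) clean: one must be careful to separate the ``free particle'' interpretation, already noted in the discussion preceding the proposition, from a genuine integration of the ODE on $\Sigma$, and to handle the degenerate orbits $w \equiv 0$ (which are the restpoints themselves and do not fall under the ``up to time shift'' clause). Alternatively, one can bypass the ODE analysis entirely by invoking the identification with free particle motion \eqref{eq_freeparticle}: every solution on $\Sigma$ pulls back to $q(t) = At + C$ in Cartesian coordinates (with $\metrictwo{A,C}=0$ after a time shift), and one then simply computes $(\rho,s,v,w)$ in terms of $A,C$ and rewrites $A = \sqrt{2h}\,\xi$, $C = \normtwo{C}\,\eta$ to recover the stated formula.
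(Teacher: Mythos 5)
Your proof is correct, and both routes you offer match the two alternatives the paper itself points to (the published proof is the single sentence ``straightforward computation; alternatively, convert to the case of a free particle''). Parts (i) and (ii) are exactly the ``straightforward computation.'' The interesting content is your part (iii), where you fill in a uniqueness argument the paper leaves implicit: you observe that $s'$ and $w'$ both lie in $\mathrm{span}(s,w)$, so (for non-restpoint orbits, where $w$ never vanishes by uniqueness of solutions) the $2$-plane $V=\mathrm{span}(s,w)$ is flow-invariant, and after choosing an orthonormal basis for $V$ the whole system collapses to the scalar Riccati equation $v'=2h-v^2$, whose non-constant solutions are precisely $\sqrt{2h}\tanh(\sqrt{2h}(\tau-\tau_0))$. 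This is a genuinely self-contained route: it never appeals to the free-particle interpretation and handles the ``up to time shift'' clause and the degenerate $w\equiv 0$ orbits explicitly. What the paper's preferred route (the free-particle correspondence) buys in exchange is conceptual economy and a hint at why the formula looks the way it does --- the half-great-circle is literally a straight line $At+C$ seen in spherical coordinates --- at the cost of needing to check that every solution of the system at infinity is indeed the rescaled image of some free-particle line, a step you correctly flag and which the paper glosses over. Your only minor loose end is the remark ``$C=\normtwo{C}\,\eta$'' in the free-particle variant: this requires $C\ne 0$, but $C=0$ gives a radial line and corresponds exactly to the $w\equiv 0$ restpoints you already excluded, so no harm done.
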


\begin{proof} The proof is a straightforward computation.  Alternatively, convert to the  case of a free particle  just described.
\end{proof}  

\begin{remark}  Figure \ref{fig:infity} and the discussion preceding proposition \ref{prop_infinityflow} describes the time $\pi$ geodesic
flow for the sphere.  Melrose and Zworski \cite{Melrose} 
underlined  the central role played by this flow for  quantum scattering.
\end{remark}

We   obtained the limiting equations (\ref{eq_odeinfinity}) by  setting $\rho=0$ in  (\ref{eq_odex}). However,   in the neighborhood of   collision points the values of the expressions $\rho U(s)$ and $\rho \tilde\nabla U(s)$ are not uniformly small.  Some of the ``half-great circle'' solutions in Proposition~\ref{prop_infinityflow} will  pass through collision configurations.   In order to correctly extend   into the interior $\rho > 0$, where
the real solutions lie, we will need to   delete these collision solutions. When we impose this restriction     we  obtain a dynamical system on the open subset 
of the infinity manifold 
  which is analytic, 
extends analytically into the interior, and   satisfies the usual existence, uniqueness and smoothness properties.

Let $\hat{\CE}= \CEh_- \cup \CEh_+$ be the subsets of collision-free equilibrium
points in the infinity manifold, where $\hat{\CE}_{\pm} :=  \CE_{\pm}\setminus
\Delta$ and $\Delta$ in this context is the extension of collision points to
infinity, that is, points when $s_i=s_j, i\neq j$. For  $p = (0,s_0,v_0,0) \in \hat{\CE}_{\pm}$, consider the linearization of \eqref{eq_odex} at $ p $:
\begin{equation}\label{eq_odelinearized}
\m{ \rho_1 \\ s_1 \\  v_1  \\  w_1}'= \m{-v_0 &0&0&0 \\ 0&0&0&I \\ -U(s_0)&0&0&0 \\ \tilde\nabla U(s_0)&0&0&-v_0 I}\m{ \rho_1 \\ s_1 \\  v_1  \\  w_1}
\end{equation}
where $I$ is the $D\times D$ identity matrix acting on  $\E$  and $\rho_1, s_1, \ldots$ represent linearizations of the corresponding variables $\rho, s, v, w$. 
The   variation vector $(\rho_1 , s_1, v_1  ,  w_1) \in \R \times \E \times \R \times \E$ arising in \eqref{eq_odelinearized}  should be restricted to the $2D$-dimensional invariant subspace
\begin{equation}
\label{eq:tgtSpace}
\mathcal{T} (s_0) := \{  \metrictwo{s_0,s_1} = \metrictwo{s_0,w_1}=0\} \subset \R \times \E \times \R \times \E
\end{equation}
which is  the tangent space at $(0, s_0, v_0, w)$  to our phase space $P \subset  \R \times\E \times \R \times \E$. 
At this point is worth observing  that 
our basic    ODE  
	\eqref{eq_odex}  is    defined  and analytic for all  $s, w \in \E$ with  $s \notin \Delta$. Within this larger ``thickened'' phase
	space $\R \times \hat \E \times \R \times \E$  the  equilibrium manifold  
	  gains a dimension, becoming    $\tilde \CE = 0 \times \hat \E \times \R \times 0 \supset \CE$.  
	  Linearizing  the ``thickened''   flow at any point
	 of this larger equilibrium manifold yields  the  matrix \eqref{eq_odelinearized}.   This full  matrix has a   $(D+1)$-dimensional eigenspace with eigenvalue $\lambda = 0$ whose   eigenvectors form the tangent space to $\tilde \CE$,  
	  which is to say, are of  the form
\begin{equation}
\rho_1 = w_1 = 0  \label{eq_0Espace}
\end{equation}
Its  only other eigenvalue is $\lambda = -v_0$ which    has a $D$-dimensional  eigenspace with eigenvectors of  the form
\begin{equation}
  \rho_1 =  v_1 = 0, \qquad   w_1 =-v_0 s_1, 
\label{eq_nonzeroEspace}
  \end{equation} 
and a single   independent  generalized eigenvector
\begin{eqnarray*}
\label{eq_generalized_eigenvector}
(\rho_1 , s_1, v_1  ,  w_1) & =& (1,0,\frac{1}{v_0} U(s_0), -\frac{1}{v_0} \tilde\nabla U(s_0))\\
 & =& G(s_0,v_0). 
\end{eqnarray*}
Restricting  these eigenspaces to $\mathcal{T} (s_0)$ is achieved by simply imposing the constraints defining  $\mathcal{T} (s_0)$
  yielding $T_{(0,s_0, v_0, 0)} \CE = \{ (0, s_1, 0, w_1);  s_1, w_1 \perp s_0 \}$ as the eigenspace for $0$ and a $D$-dimensional generalized eigenspace
for $-v_0$ 
consisting of a $(D-1)$-dimensional eigenspace and the generalized eigenvector $G(s_0, v_0)$.

The general solution of the linearized differential equation at an equilibrium point $p_0 = (0,s_0,v_0,0)$ can be found from the matrix exponential.  If 
$L(p_0)$ denotes the matrix in (\ref{eq_odelinearized}) then we have
$$
\exp(\tau L(p_0)) = \m{u(\tau)&0&0&0\\ \tilde\nabla U(s_0)\left((1-u(\tau))/v_0^2-\tau u(\tau)/v_0\right)&I&0&I(1-u(\tau))/v_0\\
U(s_0)(u(\tau)-1)/v_0&0&1&0\\\tilde\nabla U(s_0)\tau u(\tau) &0&0&u(\tau)I}
$$
where $$u(\tau) = \exp(-v_0\tau).$$

Restricting  the linearized flow to the generalized eigenspace $N_{(s_0, v_0)}$  for  $\lambda = -v_0$ and then
adding the result to the corresponding equilibrium point yields the linear approximation to the full flow.    
  This  generalized eigenspace $N_{(s_0, v_0)}$ consists of all vectors of the form
\begin{equation}
\label{eigenspace}
i_{(s_0, v_0)} (s_1, \rho_1) := \rho_1 G(s_0,v_0) + (0,s_1,0,-v_0 s_1).
\end{equation}
Apply $\exp(\tau L (p_0))$ to this  vector and add the result  to the equilibrium point   $(0,s_0,v_0,0)$  we obtain  the representation
\begin{equation}\label{eq_stableunstablelinearizedflow}
\m{ \rho (\tau) \\ s (\tau) \\   v (\tau)  \\ w (\tau)} =\m{0 \\s_0 \\v_0 \\ 0} +  u(\tau) \m{ \rho_1   \\ s_1  - (\rho_1 /v_0) \tilde \nabla U(s_0) \tau    \\  (\rho_1 /v_0) U(s_0)  \\ -v_0 s_1  - (\rho_1 /v_0)  \tilde \nabla U(s_0)  + \rho_1 \tilde \nabla U(s_0) \tau  } 
\end{equation}
 for this linear flow.
 
 It will  help  to be  explicit about our linearizing change of variables  $(\rho, s, v, w) \leftrightarrow (\rho_1, s_0, v_0, s_1)$
 which  we are advocating for and to give it a name. We call the change of variables   $J$ and  define it by the equality   
  $$(\rho, s, v, w) = (0, s_0, v_0, 0) +  i_{(s_0, v_0)} (s_1,  \rho_1),$$
  where  $i_{(s_0,v_0)}$ is the linear inclusion  \eqref{eigenspace} above which parameterizes $N_{(s_0, v_0)}$.
Thus 
  \begin{equation}
J: (s_0, v_0, s_1, \rho_1) \mapsto (\rho, s, v, w) 
\label{eq:alignedCoords}
\end{equation}
has the  component expression 
$$(\rho, s, v, w) = \left( \rho_1, s_0 + s_1,  v_0 + \frac{1}{v_0} U(s_0) \rho_1, -\frac{1}{v_0} \tilde \nabla U( s_0, v_0) \rho_1 - v_0 s_1 \right)$$

In the   new $(s_0,v_0, s_1, \rho_1)$  coordinates, the  linearized  flow reads:
  \begin{equation} 
  \label{linflow}
  L_{\tau}:  (s_0, v_0, s_1,  \rho_1) \mapsto ( s_0, v_0,  \tilde s_1 , \tilde \rho_1 )
  \end{equation}
  where 
  \begin{eqnarray*}
  \tilde s_1   & =  & u  (\tau; v_0)   s_1   -  \tau u  (\tau;v_0) \alpha(s_0,v_0) \rho_1  \\
  \tilde \rho_1 &  =  &   \qquad  \quad   \quad  \quad   u (\tau;v_0)    \rho_1 
  \end{eqnarray*}
  with     $$ u(\tau;v_0)   = \exp(-v_0 \tau), \text{ and }  \alpha(s_0,v_0)  = \frac{1}{v_0} \tilde \nabla U( s_0, v_0) .$$
  
 The following analytic linearization theorem will be key to the sections that follow.   The theorem serves to transfer the 
 $(s_0,v_0, s_1, \rho_1)$ coordinates  to the extended phase space,  converting  the nonlinear flow to the linearized one just described.

\goodbreak

\begin{theorem}\label{th_analyticlinearizationv1}
There is an analytic diffeomorphism $\Phi$  from a neighborhood of  the equilibrium manifold  $\hat \CE$ (collisions deleted!)  in  extended phase space
to a neighbhorhood of the zero section $\{ s_1 = 0, \rho_1 = 0 \}$ of the new $(s_0, v_0, s_1, \rho_1)$-variable space which 
conjugates  the n-body  flow $\phi_{\tau}$ and
the linearized  flow   $L_{\tau}$  (eq (\ref{linflow})): 
$$\Phi (\phi_{\tau} (p)) = L_{\tau} (\Phi(p))$$
for as long as $\tau$ is small enough so that the curves  $\phi_{\tau'} (p), L_{\tau'}(p)$ lie in their corresponding
neighborhoods for $0 \le \tau' \le \tau$.   Moreover, $\Phi^{-1} (s_0,v_0, s_1, \rho_1) = J(s_0, v_0, s_1, \rho_1) + O(|s_1|^2 + \rho_1 ^2)$,
where $J$ is the fiber-linear change of coordinates defined above by equation (\ref{eq:alignedCoords}).
%so that in particular, $\Phi^{-1}(s_0,v_0, 0,0) = (0, s_0, v_0, 0)$ parametrizes $\CE$.
\end{theorem}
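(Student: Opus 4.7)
The plan is to deduce Theorem~\ref{th_analyticlinearizationv1} from a parameterized analytic Poincar\'e--Sternberg linearization theorem applied transversally to the normally hyperbolic equilibrium manifold $\hat{\CE}$, with the parameters given by the coordinates $(s_0,v_0)$ on $\hat{\CE}$ itself. The more general statement to be proved in the appendix as Theorem~\ref{th_analyticlinearization} will do the heavy analytic lifting; the work of the present proof is to cast the $n$-body setup into a form in which it applies.

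First I would use the fiber-linear map $J$ from~(\ref{eq:alignedCoords}) to present a tubular neighborhood of $\hat{\CE}$ in extended phase space as (the total space of) the normal bundle $\bigsqcup_{(s_0,v_0)} N_{(s_0,v_0)}$, with coordinates $(s_0,v_0,s_1,\rho_1)$ in which $\hat{\CE}=\{s_1=0,\ \rho_1=0\}$. In these coordinates the pulled-back vector field reads
\begin{align*}
\dot{s}_0 &= f_1(s_0,v_0,s_1,\rho_1), & \dot{v}_0 &= f_2(s_0,v_0,s_1,\rho_1), \\
\dot{s}_1 &= -v_0\,s_1 - \alpha(s_0,v_0)\,\rho_1 + g_1, & \dot{\rho}_1 &= -v_0\,\rho_1 + g_2,
\end{align*}
with $f_1,f_2,g_1,g_2$ vanishing to order two in $(s_1,\rho_1)$, since $\{s_1=\rho_1=0\}$ is an invariant manifold of equilibria. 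The first-order part agrees with the linear flow $L_\tau$ of~(\ref{linflow}) by the explicit matrix exponential computed above.

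Next I would verify the spectral hypotheses. The transverse spectrum at every point of $\hat{\CE}$ consists only of $-v_0$ (with a $2\times 2$ Jordan block coupling $s_1$ and $\rho_1$ through $\alpha$), and $v_0=\pm\sqrt{2h}\ne 0$. Thus on compact subsets of $\hat{\CE}$ the single nonzero eigenvalue lies uniformly in the Poincar\'e domain, and the only possible resonance $-v_0=k(-v_0)$ with integer $k\ge 2$ has no solution. Consequently there are no resonant nonlinear terms to obstruct linearization, and a parameter-dependent majorant construction yields an analytic near-identity diffeomorphism $\Psi$ that kills all higher-order terms in the normal directions while fixing $\hat{\CE}$ pointwise. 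Setting $\Phi=\Psi\circ J^{-1}$ gives the claimed conjugacy; since $\Psi$ differs from the identity only by terms of order $\ge 2$ in $(s_1,\rho_1)$, the stated expansion $\Phi^{-1}=J+O(|s_1|^2+\rho_1^2)$ is automatic.

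The main obstacle is twofold. First, the nilpotent coupling through $\alpha(s_0,v_0)$ prevents a direct citation of the textbook Poincar\'e theorem, which is usually stated for diagonalizable linear parts; one must either rescale in $(s_1,\rho_1)$ so that the nilpotent piece becomes a small perturbation absorbed into the nonlinearity, or refine the estimates in the homological equation to accommodate it. Second, one needs the linearizing transformation to depend analytically on the parameters $(s_0,v_0)$ uniformly as they range over compact subsets of $\hat{\CE}$, rather than merely point-by-point. Both refinements are standard but technically delicate, and are precisely what the more general Theorem~\ref{th_analyticlinearization} of the appendix is designed to supply, so the proof here would consist of checking the hypotheses of that theorem and reading off the conclusion.
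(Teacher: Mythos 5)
Your proposal follows essentially the same route as the paper: present a tubular neighborhood of $\hat{\CE}$ via the fiber-linear map $J$, observe that the only transverse eigenvalue is $-v_0\ne 0$ so no resonances $k\lambda=\lambda$ with $k\ge 2$ can occur, and then invoke the parameter-analytic linearization result proved as Theorem~\ref{th_analyticlinearization}. Your explicit pulled-back normal form with $f_i,g_i=O(|(s_1,\rho_1)|^2)$ and the formula $\Phi=\Psi\circ J^{-1}$ are exactly the right way to read off the error estimate in the last sentence of the theorem.

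The one step you gloss over, and which the paper treats explicitly, is the ``thickened phase space'' issue of Remark~\ref{rmk:Alert}: $J$ sends $(s_0,v_0,s_1,\rho_1)$ to a point with $s=s_0+s_1$, which leaves the sphere $S$ whenever $s_1\ne 0$, so $J$ does \emph{not} coordinatize a tubular neighborhood of $\hat{\CE}$ inside the extended phase space $[0,\infty)\times\hat{\Sigma}$. The paper's resolution is to apply Theorem~\ref{th_analyticlinearization} to the full vector field $X$ on the ambient space $\R\times\hat{\E}\times\R\times\E$ (where the equilibrium manifold gains a dimension, $\tilde{\CE}=0\times\hat{\E}\times\R\times 0$), and then restrict: the extended phase space $P$ is $X$-invariant, hence $\Phi^{-1}(P)$ is $X_N$-invariant, and restricting the conjugacy to this set gives the claimed statement for $X^{\mathrm{newt}}$. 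Your write-up should make this restriction step explicit; as stated, ``present a tubular neighborhood of $\hat{\CE}$ in extended phase space as the normal bundle'' is not literally true. A minor further quibble: your suggested ``rescale $(s_1,\rho_1)$ so the nilpotent piece becomes a small perturbation absorbed into the nonlinearity'' is not the right device --- the nilpotent part is genuinely linear and must stay in the linear model; what actually makes the argument go through is that Brushlinskaya's parametric Poincar\'e--Dulac theorem never required diagonalizability in the first place, only the eigenvalue/resonance hypotheses you correctly verified.
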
 

\begin{proof}  The proof is found in the Appendix, and follows rather directly from   the more general Theorem \ref{th_analyticlinearization} there.
\end{proof} 

\begin{remark} Note in particular that $(s_0, v_0) \mapsto \Phi^{-1}(s_0,v_0, 0,0) = (0, s_0, v_0, 0)$ parametrizes the equilibrium manifold $\hat{\CE}$.
	If we fix  this  equilibrium value $(s_0,v_0)$ and vary  $(\rho_1, s_1)$ then 
	the map $(\rho_1, s_1) \mapsto \Phi^{-1} (s_0,v_0, s_1, \rho_1)$ parametrizes the local stable  or unstable  
	manifold attached to $(0, s_0, v_0, 0)$ .   Back in
	the $(\rho,s,v,w)$ variables, the flow will be   given by (\ref{eq_stableunstablelinearizedflow}) up to terms of order $O(\rho_1^2+\normtwo{s_1}^2)$.
\end{remark}  

\begin{remark}
         \label{rmk:Alert} 
	The alert reader may have noticed   something  fishy about our change of variables \eqref{eq:alignedCoords}. The variable $s =s_0 + s_1$ 
	is supposed to lie on the sphere $S$, but if  $s_0 \in S$ and  $s_1 \in T_{s_0} S = s_0 ^{\perp} $ then  $s_0 + s_1 \notin S$
	unless $s_1 = 0$.    To escape this trap,
	  relax the constraint that $\|s \| =1$ and work on the ``thickened phase space''  described immediately after \eqref{eq:tgtSpace}.
	  Theorem \ref{th_analyticlinearizationv1} holds in this larger context, and that is where we will prove it, 
	  restated and somewhat generalized as  theorem  \ref{th_analyticlinearization} and proved in the appendix.  As already noted just after
	  \eqref{eq:tgtSpace},
	  the  eigenvalue structures of the linearization at an equilibrium for our original phase space and for its thickened version are nearly identical,
	  both having as their only  eigenvalues  $0$ and $-v_0$ with $0$'s eigenspace being  the tangent space to the equilibrium manifold and
	  $v_0$ enjoying a  single nonzero  generalized eigenvector and a codimension genuine eigenspace. 
	  % The only difference being 
	  %the two linearizations is a matter of dimension:   when we thicken then  each eigenspace for the original phase space  gains a dimension. 
	  %In this linear context the analogue of theorem \ref{th_analyticlinearizationv1} becomes easier to prove and is 
	  %stated and proved in the appendix as theorem  \ref{th_analyticlinearization}. 
	  Theorem  \ref{th_analyticlinearizationv1} 
	  becomes a   corollary of the more general theorem   \ref{th_analyticlinearization} and an inspection of the proofs of
	  these  theorems yields the  solution to this  ``$s = s_0 + s_1$'' puzzle.   In  a nutshell, for $s_1$
	small the  points $s_0 + s_1$ of 
	the affine space $s_0 + T_{s_0} S$ lies within $O(|s_1|^2)$ of the sphere $S$.  The $O(|s_1|^2 + \rho_1 ^2)$ error
	of $\Phi^{-1} = J + O(|s_1|^2 + \rho_1 ^2)$ occurring in  the  end of the statement of theorem \ref{th_analyticlinearizationv1} 
	(compare with the $N_p$ of  theorem \ref{th_analyticlinearization}) includes a  projection
	of the affine space $s_0 + T_{s_0} S$ onto the sphere $S$.

\end{remark}

\section{Chazy-type Asymptotics}\label{sec_asymptotics}

In this section, we reinterpret our results about the flow near infinity using  the Newtonian timescale,  and so
rederive Chazy's asymptotic expansion with   error terms. {We focus on the case $t \to +\infty$ in most of this
  section. At the end we describe the various adjustments needed for the $t \to -\infty$ case.} 
The Chazy  expansion,  discussed  in the beginning of the paper,  is 
\begin{equation}
q(t)  =  A t + B \log (t) + C + f(t; A, C),  \text{ as } t \to +\infty  
\label{eq_ChazyAsymptotics} 
\end{equation}
where  $A, B, C \in \R^{nd}$ are constant vectorial parameters. 
Chazy claimed that the error term had the form $f(t;A, C) = Q(1/t,\log t/t; A, C)$
where $Q(u,v; A, C) $ is a two-variable  power series in $u$ and $v$  having   no constant term, absolutely convergent in a neighborhood 
of  $u = v= 0$ and whose coefficients depend analytically on A and C.  Note that in particular such an $f$ is 
$O(\log t/t)$.

The necessity of the $\log(t)$ term in (\ref{eq_ChazyAsymptotics})  is seen by
trying out  the ansatz   $At + C$ in Newton's equations.  The equations fail to hold at  $O(1/t^2)$ as $t \to +\infty$.
Assuming instead    $A t + B \log (t) + C$  and plugging into Newton's equations yields   $B = B(A)$ as per:
\begin{equation}
%q(t) & = & A t + B \log (t) + C + f(t, \log (t))  ; f = O( \log(t)/t) , t \to +\infty \\
B = -\nabla U (A)
\label{eq_ChazyAsymptoticsB} 
\end{equation}
in order for the solution to hold up to $O(1/t^2)$. 

The parameters $A, C \in  \R^{nd}$  
are fixed ``scattering parameters'' and determine  the analytic function $f$. 
The parameter $A$ simultaneously  encodes the   asymptotic velocities and asymptotic shape since
$$ A = \lim_{t \to +\infty}  q(t)/t = \lim_{t \to +\infty} \dot q (t).$$
In keeping with our hyperbolicity assumption, we insist that $A$ be collision-free:  $A \in \hat{\E}$.  
The parameter $C$ is not uniquely determined, since a time shift $q(t) \mapsto q(t-t_0)$ alters $C$.  By shifting the origin of time, we can find a unique $C$ with $\metrictwo{A,C}=0$.
In analogy with the free particle case,  $C$ can be viewed as an ``impact parameter'' -- a   measure of how close the straight line $At + C$
misses total collision.  But for the orbits in the Newtonian problem, this interpretation of $C$ is less clear.
  
Chazy \cite[eq.~(22)--(27)]{Chazy} established his expansion \eqref{eq_ChazyAsymptotics}  and its analyticity for  hyperbolic solutions. See also \cite{Gingold}.   
We will  derive  his   result   from the local behavior of solutions near the stable/unstable manifold at infinity as investigated in the previous sections.  We summarize the result now. 

\begin{comment}
\begin{theorem} 
\label{thm_Chazy} Let $q(\tau)$ be the position vector as a function of time for any solution
lying in the stable manifold of the equilibrium point $(0, s_0, v_0,0) \in \CE_+$ 
and having linearized data $\rho_1, s_1$ (with $s_1 \perp s_0$) as per equation \eqref{eq_stableunstablelinearizedflow}.  
Then, reexpressed in terms of Newtonian time $t$,  the solution $q(t)$ 
satisfies Chazy's asymptotics \eqref{eq_ChazyAsymptotics}  with coefficients given by
$$A = v_0 s_0,$$
$$B = -\nabla U(v_0 s_0)= -\nabla U (A) $$
as per  \eqref{eq_ChazyAsymptoticsB} ,  
$$C = - \frac{c v_0^3 + U(s_0)}{v_0^2}s_0 +\frac{1}{\rho_1}s_1 -\frac{\log(\rho_1 v_0)}{v_0^2} \nabla U(s_0),$$
where $c \in \R$ is an arbitrary constant, and with error term $f$ an analytic function of $t$, vanishing  as $t \to +\infty$
and depending analytically on the parameters $s_0, v_0, s_1, \rho_1$. 
\end{theorem}
\end{comment}

\begin{theorem} 
\label{thm_Chazy} [Case $t, \tau \to +\infty$] Let $q(\tau)$ be the position vector as a function of time $\tau$ for any solution lying in the stable manifold of the equilibrium point $(0, s_0, v_0,0) \in \CEh_+$ 
and having linearized data $\rho_1, s_1$ with $\metrictwo{s_0,s_1}=0$ as per equation \eqref{eq_stableunstablelinearizedflow}.  
Then, reexpressed in terms of Newtonian time $t$,  as $t \to +\infty$ the solution $q(t)$ 
satisfies Chazy's asymptotics \eqref{eq_ChazyAsymptotics}  with coefficients given by
\begin{equation}
\label{eq_Chazy}
\begin{split}
A &= v_0 s_0\\
B &= -\nabla U(v_0 s_0)= -\nabla U (A)\\
C &= \frac{s_1}{\rho_1} -\frac{\log(\rho_1 v_0)}{v_0^2}\tilde \nabla U(s_0).
\end{split}
\end{equation}
The error term is of the form $f(t) = Q(1/t,\log t/t)$
where $Q$ denotes a two-variable power series beginning with no constant term
depending analytically on the parameters $s_0, v_0, s_1, \rho_1$ and converging when the arguments are sufficiently small.
\end{theorem}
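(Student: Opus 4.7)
The plan is to invoke Theorem~\ref{th_analyticlinearizationv1} so that the motion on the stable manifold of $(0, s_0, v_0, 0) \in \CEh_+$ reduces to the explicit linear flow $L_\tau$ in the new coordinates $(s_0, v_0, s_1, \rho_1)$, then to push the expansion back to the physical $(\rho, s, v, w)$ variables via $\Phi^{-1} = J + O(|s_1|^2 + \rho_1^2)$, and finally to convert to Newtonian time via $dt = r\, d\tau$.  The structural reason Chazy's two-variable series $Q(1/t, \log t/t)$ appears is that as $\tau \to +\infty$ both $u := e^{-v_0\tau}$ and $u\log u = -v_0 \tau\, e^{-v_0\tau}$ tend to $0$, so any analytic function of the linearized state
$$(s_1(\tau), \rho_1(\tau)) = \left(u s_1 + \frac{u\log u}{v_0^2}\tilde\nabla U(s_0)\rho_1,\; u\rho_1\right)$$
becomes analytic in $(u, u\log u)$; its monomials $u^a(u\log u)^b = u^{a+b}(\log u)^b$ have $\log u$-degree bounded by $u$-degree, which is precisely the structure of $Q(1/t, \log t/t)$ under the substitutions $u \sim 1/t$, $u\log u \sim -\log t/t$.

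Concretely, first I would substitute the linearized orbit from \eqref{linflow} into $\Phi^{-1}$ to obtain
$$\rho(\tau) = \rho_1 u + O(u^2(\log u)^2), \quad s(\tau) = s_0 + u s_1 + \frac{u\log u}{v_0^2}\tilde\nabla U(s_0)\,\rho_1 + O(u^2(\log u)^2),$$
together with $v(\tau) = v_0 + (U(s_0)/v_0)\rho_1 u + \cdots$, all analytic in $(u, u\log u)$. Forming $q(\tau) = s(\tau)/\rho(\tau)$ yields the leading expansion
$$q(\tau) = \frac{s_0}{\rho_1 u} + \frac{s_1}{\rho_1} + \frac{\log u}{v_0^2}\tilde\nabla U(s_0) + \cdots.$$
Integrating $dt/d\tau = 1/\rho(\tau)$ then produces $t(\tau) = e^{v_0\tau}/(v_0 \rho_1) + (\text{polynomial in }\tau)/\rho_1^2 + (\text{decaying corrections})$, which inverts to $\tau = v_0^{-1}\log(v_0\rho_1 t) + (\text{analytic in } 1/t,\log t/t)$. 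Substituting this back, together with $e^{v_0\tau} = v_0\rho_1 t + (\text{polynomial in }\log t)$, into $q(\tau)$ and collecting powers of $t$ and $\log t$ yields $q(t) = A t + B\log t + C + f(t)$: the identification $A = v_0 s_0$ is immediate, $C$ emerges from matching the constant level as in \eqref{eq_Chazy}, and $f(t) = Q(1/t, \log t/t)$ is inherited from the $(u, u\log u)$-analyticity.

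The delicate step is pinning down $B$. The tangential contribution $(\log u/v_0^2)\tilde\nabla U(s_0)$ in $q(\tau)$ supplies only the perpendicular-to-$s_0$ piece $-\tilde\nabla U(s_0)/v_0^2 \cdot \log t$, whereas $-\nabla U(A) = -\nabla U(s_0)/v_0^2$ has an additional $s_0$-component $U(s_0) s_0/v_0^2$ (since $\tilde\nabla U(s_0) = \nabla U(s_0) + U(s_0) s_0$ by Euler's identity for $U$ of degree $-1$). The missing $s_0$-component must therefore come from the $\tau$-to-$t$ conversion: the correction $v - v_0 = (U(s_0)/v_0)\rho_1 u + \cdots$ from $\Phi^{-1}$, fed through $\dot r = v r$, produces a linear-in-$\tau$ correction to $t(\tau)$ whose inversion contributes precisely $+U(s_0) s_0/v_0^2 \cdot \log t$ to $q(t) = r(\tau) s(\tau)$, completing $B = -\nabla U(A)$. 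A clean a-posteriori check is that any expansion of the form \eqref{eq_ChazyAsymptotics} satisfying Newton's equations is forced to have $B = -\nabla U(A)$ at order $1/t^2$ as in \eqref{eq_ChazyAsymptoticsB}; once existence of the $(u, u\log u)$-analytic expansion is secured by the linearization theorem, the identification is automatic.
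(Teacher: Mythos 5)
Your proposal follows essentially the same route as the paper: linearize via Theorem~\ref{th_analyticlinearizationv1}, extract the $(u,\tau u)$-expansions of $\rho,s,v$ with $u = e^{-v_0\tau}$, form $q = rs = s/\rho$, integrate $dt = r\,d\tau$ and invert via a two-variable inverse function theorem, then match coefficients. Your account of where $B$ comes from is correct in spirit: the $s_0$-component of $B$ arises from the $\log t$ correction that appears when $e^{v_0\tau}$ is re-expressed in $t$, that correction in turn being traceable to the $O(u)$ term in $v(\tau)$ that produces a linear-in-$\tau$ contribution to $t(\tau)$. (You gloss over the fact that the raw constant term in $q$ also has an $s_0$-component, $-U(s_0)s_0/v_0^2$ plus pieces from the inversion, which is absorbed by the free time-translation constant $c$; the paper spells this out, but it is a bookkeeping point.)

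The genuine gap is in the error term, which is precisely where the paper's proof has to do nontrivial extra work. It is true that $\rho, s, v$ are analytic in $(u, u\log u)$, and that this class of series has $\log$-degree bounded by $u$-degree, matching the shape of $Q(1/t,\log t/t)$. But $q = s/\rho$ carries an overall factor $\rho^{-1} \sim u^{-1}/\rho_1$, and likewise $t = \int r\,d\tau$ is $u^{-1}$ times such a series. After performing the legitimate analytic change of variables $(u,\tau u)\leftrightarrow(1/t,\log t/t)$, what falls out for $q(t)-At-B\log t-C$ is a remainder of the form $t\,Q_2(1/t,\log t/t)$ with $Q_2$ starting at degree two, not $Q(1/t,\log t/t)$. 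These classes differ: $t\,(\log t/t)^2 = (\log t)^2/t$, and more generally $(\log t)^{k+1}/t^k$, are permitted by the former and excluded by the latter. So the assertion that the $Q$-form is ``inherited from the $(u,u\log u)$-analyticity'' overclaims. Closing this gap requires an independent argument: substitute the ansatz $q(t) = At + B\log t + C + t\,g(s,\sigma)$ with $s=1/t$, $\sigma=\log t/t$ into Newton's equations, use the homogeneity of $U$ to show $\ddot q$ must carry a factor $s^2$, and deduce that $g_{\sigma\sigma}$ is divisible by $s$, which kills every pure $\sigma^k$ monomial, $k\ge 2$, in $g$. Without this step (or an equivalent) you only obtain the weaker $t\,Q_2$ error bound, not the stated conclusion.
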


{\sc Sketch of Proof.} From the analytic linearization Theorem \ref{th_analyticlinearizationv1} we have that $\rho, s, v, w$
are analytic functions of $\tau$ depending parametrically in an analytic way on the limiting values $s_0, v_0 $
and linearizing parameters $\rho_1, s_1$.  In particular $r = 1/\rho$ and $s$ are analytic  functions of $\tau$ whose expansions we can work out. 
Multiplying  $r(\tau)$ and $s(\tau)$ yields the position vector $q(\tau) = r(\tau) s (\tau)$   as an analytic function whose expansion we can work out
and which depends parametrically   on   $s_0, v_0$
and  $\rho_1, s_1$. 
To finish off the derivation we require analytic expansion of  $\tau$ as a function of Newtonian time $t$.

{\sc Proof in Detail.} From the analytic linearization theorem and from  \eqref{eq_stableunstablelinearizedflow}  
we have that $\rho$, $s$ and $v$ are analytic functions of $\tau$,
depending analytically on the limiting equilibrium values $s_0,  v_0$,
and analytic linearization variables $s_1, \rho_1$,  and having first few terms,
\begin{comment}
\begin{equation}
\label{eq_rho}
\begin{split}
\rho(\tau) & = \rho_1 u (\tau)  +d_2 u(\tau)^2 \tau^2 +d_1 u(\tau) ^2 \tau+ \rho_2 u (\tau)^2 +  O(u (\tau)^3\tau^3) \\
s(\tau) & = s_0 +  s_1 u(\tau) - \rho_1 \nbtl U(s_0) u(\tau) \tau/v_0 + O (u(\tau)^2\tau^2) \\
v(\tau) & = v_0 +\rho_1 U(s_0) u(\tau)/v_0 + O(u(\tau)^2 \tau^2) \\
\end{split}
\end{equation}
with
$$u(\tau) = exp(-v_0 \tau).$$
\end{comment}

\begin{equation}
\label{eq_rho}
\begin{split}
\rho(\tau) & = \rho_1 u (\tau)  +d_2 u(\tau)^2 \tau^2 +d_1 u(\tau) ^2 \tau+ \rho_2 u (\tau)^2 +  f_3(u,\tau u) \\
s(\tau) & = s_0 +  s_1 u(\tau) - \rho_1 \nbtl U(s_0) u(\tau) \tau/v_0 +f_2(u,\tau u) \\
v(\tau) & = v_0 +\rho_1 U(s_0) u(\tau)/v_0 + g_2(u,\tau u) \\
\end{split}
\end{equation}
where here and in what follows, expressions like $f_p, g_p, \ldots$ denote unspecified convergent power series which begin with terms of order $p$ and converge when the arguments are sufficiently small. In this section we always set 
$$ u(\tau) = u(\tau; v_0)= \exp(-v_0 \tau) .$$

The value of the coefficient $\rho_2$ will be needed later.  Plug the above $\rho(\tau)$ and $v(\tau)$ into the first equation of \eqref{eq_odex}, we get 
\begin{equation} 
\label{eq_rho2}  d_1 =d_2 =0,  \;\; \rho_2 = \left( \frac{\rho_1}{v_0} \right)^2 U(s_0). 
\end{equation}

Using these we can get asymptotic estimates for $q(\tau) = r(\tau)s(\tau)$.  First consider $r(\tau)$.
\begin{lemma}
The size variable satisfies
\begin{equation}\label{eq_r}
r(\tau)  = \frac{1}{\rho_1}e^{v_0 \tau} -\frac{U(s_0)}{v_0^2} + u^{-1} h_2(u,\tau u)
\end{equation}
\end{lemma}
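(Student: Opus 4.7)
The plan is to invert the expansion $\rho(\tau)=\rho_1 u + \rho_2 u^2 + f_3(u,\tau u)$ supplied by \eqref{eq_rho}--\eqref{eq_rho2}.

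\textbf{Step 1: Factor a $u$ out of $\rho(\tau)$.} The key structural fact is that
\[
\rho(\tau) = u\,\rho_1\,\tilde K(u,\tau u),\qquad \tilde K(0,0)=1,
\]
for a convergent power series $\tilde K$ in two variables. I would prove this by appealing to invariance of the infinity manifold $\Sigma=\{\rho=0\}$. In the linearizing coordinates of Theorem \ref{th_analyticlinearizationv1}, the generalized eigenvector $G(s_0,v_0)$ is the unique stable direction with nonzero $\rho$-component (equal to $1$), while the genuine stable eigenvectors in the $s_1$ directions have vanishing $\rho$-component and hence lie in $\Sigma$. A short direct check, using the explicit form of $L_\tau$ in \eqref{linflow}, shows $\{\rho_1=0\}$ is the unique flow-invariant, codimension-$1$, analytic submanifold through $\hat{\CE}$ tangent to $\Phi(\Sigma)$; hence $\Phi(\Sigma)=\{\rho_1=0\}$ locally. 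This forces $\Phi^{-1}_{\rho}(s_0,v_0,s_1,0)\equiv 0$, so $\Phi^{-1}_{\rho}=\rho_1\cdot K$ with $K|_{s_1=\rho_1=0}=1$. Substituting the linearized flow, whose components $s_1(\tau)$ and $\rho_1(\tau)=u\rho_1$ are linear in the pair $(u,\tau u)$, yields the factorization.

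\textbf{Step 2: Invert.} Since $\tilde K(0,0)=1\neq 0$, the reciprocal $1/\tilde K(X,Y)$ is itself a convergent power series in $(X,Y)$ near the origin, so
\[
r=\frac{1}{\rho}=u^{-1}\,\Psi(u,\tau u),\qquad \Psi(X,Y):=\frac{1}{\rho_1\,\tilde K(X,Y)},
\]
with $\Psi$ analytic near $(0,0)$ and $\Psi(0,0)=1/\rho_1$.

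\textbf{Step 3: Read off the first two Taylor coefficients.} Write $\tilde K(X,Y) = 1 + \kappa_{10}X + \kappa_{01}Y + \cdots$. Matching $\rho_1 u\tilde K$ against \eqref{eq_rho} at the $u^2$ and $u^2\tau$ levels, the values $\rho_2 = (\rho_1/v_0)^2 U(s_0)$ and $d_1 = 0$ force $\kappa_{10}=\rho_1 U(s_0)/v_0^2$ and $\kappa_{01}=0$. Taylor-inverting gives
\[
\Psi(X,Y) = \frac{1}{\rho_1} - \frac{U(s_0)}{v_0^2}\,X + h_2(X,Y),
\]
for a convergent power series $h_2$ beginning at order $2$. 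Setting $(X,Y)=(u,\tau u)$ and multiplying by $u^{-1}$ produces exactly \eqref{eq_r}.

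\textbf{Main obstacle.} The one genuinely nontrivial step is Step 1. Without the divisibility of $f_3$ by $u$, the naive geometric expansion $1/\rho = \rho_1^{-1}u^{-1}\bigl(1 - \rho_2 u/\rho_1 - f_3/(\rho_1 u) + \cdots\bigr)$ would produce bare $\tau^k$-contributions of the form $u\tau^k$ with $k$ arbitrarily large, which cannot be absorbed into $u^{-1}h_2(u,\tau u)$. Invariance of $\Sigma$ is precisely the fact that rules these out; everything else is routine power-series bookkeeping.
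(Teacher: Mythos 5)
Your proof is correct, but it takes a genuinely different route from the paper's. The paper exploits the ODE $r' = vr$ directly: it integrates to get $r(\tau) = r(0)\exp(v_0\tau)\exp\bigl(\int_0^\tau(v(\lambda)-v_0)\,d\lambda\bigr)$, normalizes using $\lim_{\tau\to\infty}r(\tau)e^{-v_0\tau} = 1/\rho_1$ to replace $r(0)$ by integrals from $+\infty$, and then expands the exponential factors, relying only on the elementary observation that the antiderivative of a convergent series $f_p(u,\tau u)$ (integrated from $+\infty$) is again of the form $f_p(u,\tau u)$. You instead invert the series for $\rho$, which forces you to confront the absence of bare $\tau^k$ contributions head-on. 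You correctly identify this divisibility of $\rho$ by $u$ — in the strong sense that $\rho/(\rho_1 u)$ is again a two-variable series in $(u,\tau u)$ with value $1$ at the origin — as the crux, and your appeal to invariance of $\Sigma=\{\rho=0\}$ to get $\Phi^{-1}_\rho = \rho_1 K(s_0,v_0,s_1,\rho_1)$ is the right idea. The ``short direct check'' of uniqueness deserves one more sentence, though: for a defining function $g = \rho_1 + O(2)$ of an $L_\tau$-invariant hypersurface, the invariance forces $g\circ L_\tau = \mu_\tau g$ for a unit $\mu_\tau$ with $\mu_\tau(0)=u$; restricting to $\rho_1=0$, where $L_\tau$ acts by $s_1\mapsto us_1$, and comparing lowest-degree $k\ge 2$ homogeneous pieces yields $u^k h_k = u h_k$, which for $\tau\ne 0$ forces $h_k=0$, so $g|_{\rho_1=0}\equiv 0$. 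With that filled in, your Steps 2--3 are routine and yield the same coefficients. Net trade-off: the paper's integration argument is more elementary and sidesteps any discussion of the invariant manifold $\Sigma$ in linearized coordinates, while your route makes the geometric mechanism behind the $u$-divisibility explicit and extends cleanly to the $s$- and $v$-components as well, which the paper handles separately.
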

\begin{proof}
Recall the differential equation $r'=v r$.  Using (\ref{eq_rho}) we have
$$\begin{aligned}
r(\tau) &= r(0)\exp(v_0\tau)\exp\left(\int_0^\tau \rho_1 U(s_0) u(\lambda)/v_0 \,d \lambda\right)\exp\left(\int_0^\tau g_2(u(\lambda),\lambda u(\lambda)) \,d \lambda\right)\\
&=  \frac{1}{\rho_1}\exp(v_0\tau)\exp\left(\int_{+\infty}^\tau \rho_1 U(s_0) u(\lambda)/v_0 \,d\lambda\right)\exp\left(\int_{+\infty}^\tau g_2(u(\lambda),\lambda u(\lambda)) \,d \lambda\right)
\end{aligned}
$$
where we used the fact that $\lim_{\tau\into +\infty}r(\tau)\exp(-v_0 \tau) = 1/\rho_1$ which follows from the first equation  of \eqref{eq_rho}.
The second exponential integral  factor evaluates to 
$$\exp\left(- \rho_1 U(s_0) u(\tau)/v_0^2 \right) = 1 -\rho_1 U(s_0) u(\tau)/v_0^2 + p_2(u)$$
and the third is of the form
$$\exp\left(G_2(u(\tau),\tau u(\tau)\right) = 1+ r_2(u,\tau u).$$
We have used the fact that the integral of a series of the form $f_p(u(\tau),\tau u(\tau))$ is an expression of the same form.
Multiplying the factors and using the fact that $\exp(v_0\tau) = u^{-1}(\tau)$ gives the required formula.
\end{proof}

Multiplying out $q(\tau) = r(\tau)s(\tau)$ we now find
\begin{equation}
\label{eq_q}
q (\tau) = \frac{s_0}{\rho_1} e^{v_0 \tau} - \frac{\nbtl U(s_0)}{v_0}\tau + \left( \frac{s_1}{\rho_1} - \frac{U(s_0)}{v_0^2} s_0 \right) + u^{-1}k_2(u(\tau),\tau u(\tau)). 
\end{equation}

We will rewrite this expansion for $q$ using  $t$ as a time parameter  in place of $\tau$. For this we need the following lemma.
\begin{lemma} \label{lm_tauexpant}  Along  any solution in the stable manifold
of the equilibrium point $(0, s_0, v_0, 0)$,  $v_0 >0$,  the Newtonian time  $t$ 
and our  new time scale   $\tau$ are related, for all sufficiently large $t, \tau$ by 
 an invertible  analytic  change of variables
of    the form 
\begin{equation}
\label{eq_texpan}   t =  \frac{1}{\rho_1 v_0}e^{v_0  \tau} -  \frac{U(s_0)}{v_0^2} \tau + c + u^{-1}p_2(u,\tau u),
\end{equation}
where $c$ is an arbitrary constant. Moreover its analytic inverse is characterized by
\begin{equation}
\label{eq_uexpan}
e^{v_0  \tau} = \rho_1 v_0 t + \frac{\rho_1}{v_0^2} U(s_0) \log(t) + \frac{\rho_1}{v_0^2}U(s_0) \log(\rho_1 v_0)- \rho_1 v_0 c+t P_2(1/t,\log t/t),
\end{equation}
\begin{equation}
\label{eq_tauexpan} \tau = \frac{1}{v_0} \log(t) + \frac{1}{v_0} \log(\rho_1 v_0) + P_1(1/t,\log t/t)
\end{equation}
\end{lemma}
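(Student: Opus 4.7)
\textbf{Proof plan for Lemma \ref{lm_tauexpant}.}

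The approach is to integrate the relation $dt/d\tau = r(\tau)$ using the expansion \eqref{eq_r}, which gives \eqref{eq_texpan} directly; then invert the resulting analytic map $\tau \mapsto t$ via a bootstrap/implicit function argument to obtain \eqref{eq_uexpan} and \eqref{eq_tauexpan}.

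\emph{Step 1: Derivation of \eqref{eq_texpan}.} Starting from $t(\tau) = t(0) + \int_0^\tau r(\lambda)\,d\lambda$ and using Lemma's expansion $r(\tau) = \rho_1^{-1}e^{v_0\tau} - U(s_0)/v_0^2 + u^{-1}h_2(u,\tau u)$, the first two summands integrate explicitly to $(\rho_1 v_0)^{-1}e^{v_0\tau}$ and $-(U(s_0)/v_0^2)\tau$ (absorbing the constants of integration into $c$). The key technical observation is that the primitive of $u^{-1}h_2(u,\tau u)$ again has the form $u^{-1}p_2(u,\tau u)$: indeed, a generic monomial $u^{a-1}(\tau u)^b$ with $a+b\ge 2$ equals $\tau^b e^{-(a+b-1)v_0\tau}$ with $a+b-1\ge 1$, and repeated integration by parts against $\tau^b$ produces $u^{a+b-1}$ times a polynomial of degree $b$ in $\tau$, which can be rewritten as $u^{-1}$ times a polynomial of total weight $a+b\ge 2$ in $(u,\tau u)$. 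Absolute convergence is preserved since this operation on the monomials is bounded by a fixed constant.

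\emph{Step 2: Local invertibility.} Since $r(\tau) > 0$, the map $\tau \mapsto t$ is real-analytic and strictly increasing, and by Proposition \ref{prop_infinity} it is a bijection from a neighborhood of $\tau = +\infty$ onto a neighborhood of $t = +\infty$. So an analytic inverse exists; it remains only to identify its series form.

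\emph{Step 3: Bootstrap to obtain \eqref{eq_uexpan} and \eqref{eq_tauexpan}.} Let $y = e^{v_0\tau}$; from \eqref{eq_texpan},
\begin{equation*}
y = \rho_1 v_0 t + \frac{\rho_1 U(s_0)}{v_0}\tau - \rho_1 v_0 c - \rho_1 v_0 \,u^{-1}p_2(u,\tau u),
\end{equation*}
and the final term is $O(\tau^2/y)$ since $u^{-1}p_2(u,\tau u) = y\cdot p_2(1/y,\tau/y)$ and $p_2$ begins at order two. The leading behavior $y \sim \rho_1 v_0 t$ gives the crude bound $\tau = v_0^{-1}\log y = v_0^{-1}\log t + v_0^{-1}\log(\rho_1 v_0) + O(\log t/t)$, which establishes \eqref{eq_tauexpan} to first order. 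Substituting this crude estimate of $\tau$ back into the displayed equation for $y$ yields
\begin{equation*}
y = \rho_1 v_0 t + \frac{\rho_1 U(s_0)}{v_0^2}\log t + \frac{\rho_1 U(s_0)}{v_0^2}\log(\rho_1 v_0) - \rho_1 v_0 c + tP_2(1/t,\log t/t),
\end{equation*}
since the residual error $O((\log t)^2/t)$ fits inside $tP_2(1/t,\log t/t)$. This is \eqref{eq_uexpan}. Taking $\log$ of both sides and factoring out $\rho_1 v_0 t$ then gives \eqref{eq_tauexpan} to all orders.

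\emph{Step 4: Convergence of the inverse series.} The substitution in Step 3 must be upgraded from an asymptotic estimate to a statement about convergent power series. Here I would invoke a formal inversion lemma: setting $T = 1/t$ and $L = \log t/t$ as independent formal variables, the relation \eqref{eq_texpan} is equivalent, after clearing the exponential, to an analytic equation $F(y, t, \log t) = 0$ to which the analytic implicit function theorem applies (with $\partial F/\partial y$ nonvanishing at the asymptotic values), producing a genuinely convergent series in $(T,L)$ for small $T,L$. This last point is the main technical obstacle: verifying that the ``mixed'' variables $1/t$ and $\log t/t$ can be treated as independent convergent series variables, which requires either a direct majorant estimate or an appeal to the analyticity of the linearized flow together with the Cauchy estimates that underpin Theorem \ref{th_analyticlinearizationv1}.
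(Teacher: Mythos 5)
Your Steps 1--3 follow the same route as the paper: integrate $dt = r\,d\tau$ to get \eqref{eq_texpan} (and your remark that integration preserves the $u^{-1}f_p(u,\tau u)$ form is exactly the observation the paper uses), then solve for $e^{v_0\tau}$ and $\tau$ by substitution. The "bootstrap'' phrasing is a cosmetic difference; the algebra is the same as the paper's replacement of the higher-order terms.

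The gap is in Step 4, and you correctly flag it as the crux but do not resolve it. The issue with your proposed $F(y,t,\log t)=0$ framing is that once you set $y=e^{v_0\tau}$, the relation \eqref{eq_texpan} contains $\tau$ both through $y$ and linearly (the $-U(s_0)v_0^{-2}\tau$ term and the $\tau u$ argument of $p_2$). Eliminating $\tau$ requires writing $\tau = v_0^{-1}\log y$, which makes $F$ depend non-analytically on $y$; the analytic implicit function theorem in $y$ alone does not directly apply. The paper's fix is cleaner and genuinely two-dimensional: treat $(u,w)=(e^{-v_0\tau},\tau e^{-v_0\tau})$ and $(s,\sigma)=(1/t,(\log t)/t)$ as two \emph{pairs} of independent formal variables. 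From \eqref{eq_texpan} and its logarithm one reads off that $(s,\sigma)$ are convergent power series in $(u,w)$ with nondegenerate linear part, so the two-variable analytic inverse function theorem gives $(u,w)$ as convergent series in $(s,\sigma)$. Every $f_p(u,\tau u)$ then converts to a $g_p(1/t,\log t/t)$, which is what lets you promote the first-order estimates in your Step 3 to full convergent expansions. You gesture at "a direct majorant estimate or ... Cauchy estimates'' but never commit to this device; without it, Step 4 is not complete. Once the $2\times 2$ inverse function theorem step is supplied, the rest of your argument goes through and matches the paper.
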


\begin{proof}
Using \eqref{eq_r}, we get \eqref{eq_texpan} by integrating both sides of the identity  $dt = r d\tau$. For simplicity, let 
\begin{equation}
\label{eq_ab} a = (\rho_1 v_0)^{-1}, \;\; b = U(s_0)/v_0^2.
\end{equation}
so that  \eqref{eq_texpan} becomes
\begin{equation} \label{eq_t}
  t = a e^{v_0 \tau} - b\tau +  c + u^{-1}k_2(u,\tau u) = au^{-1}\left[ 1-b\tau u/a + cu/a + l_2(u,\tau u)  \right]
  \end{equation}  
  Taking logs gives
  \begin{equation}\label{eq_logt} 
 \log t = v_0 \tau +\log(a) -b\tau u/a + cu/a + m_2(u,\tau u)
  \end{equation}  
  Now we claim that any convergent power series of the from $f_p(u,\tau u)$ can be rewritten as a convergent power series $g_p(\frac1{t},\frac{\log t}{t})$.  To see this, we view $u$ and $w=\tau u$ as independent variables and similarly for $s= 1/t$ and $\sigma = (\log t)/t$.   The formulas above can be used to express $(s,\sigma)$ as power series in $(u,w)$
  $$s = \frac{u}a  +\ldots\qquad \sigma = v_0w+\frac{\log a}{a} u + \ldots.$$
  Since the matrix of linear terms is nondegenerate, the two-dimensional inverse function theorem shows that we can invert the series to get series for $(s,\sigma)$ is terms of $(u,w)$.
  
  Using this information,  we can replace all of the higher order terms in equations (\ref{eq_t}) and (\ref{eq_logt}) by series in $1/t, (\log t)/t$.  For example, replacing all but the first two terms in  (\ref{eq_logt}) and solving for $\tau$ gives
  $$\tau = \frac1{v_0}\log t -\frac{\log a}{v_0} + P_1(1/t,\log t/t)$$
  which agrees with (\ref{eq_tauexpan}).  Using this equation to replace $\tau$ in  (\ref{eq_t}) and replacing the term $u^{-1}k_2(u,\tau u)$ by $tp_2(1/t,(\log t)/t)$ we can solve for $e^{v_0 \tau}$ to find
  $$\exp(v_0 \tau) = a^{-1}t  + \frac{b}{av_0}\log t - \frac{\log a}{av_0} -  a^{-1}c +t P_2(1/t,\log t/t)$$
as claimed in (\ref{eq_uexpan}).  Here we use the fact that a term of the form $f_1(1/t, (\log t)/t)$ is a special case of a term $t f_2(1/t, (\log t)/t)$. 
\end{proof}

\noindent
Completion of the Proof of Theorem~\ref{thm_Chazy}. The error term in  \eqref{eq_q} can be replaced by $t f_1(1/t, (\log t)/t)$ as in the proof of the last lemma.  Then
plugging \eqref{eq_uexpan} and \eqref{eq_tauexpan} into \eqref{eq_q}, we get 
$$q(t) = v_0s_0 t -\frac{\nabla U(s_0)}{v^2_0} \log t+C+ tQ_2(1/t,\log t/t) . $$
with 
$$C=\frac{s_1}{\rho_1} -\frac{\log(\rho_1 v_0)}{v_0^2} \tilde \nabla U(s_0) +\left(\frac{U(s_0)}{v_0^2}(\log(\rho_1v_0)-1) - c v_0   \right)s_0.$$
Here we have used the fact that $\nbtl U(s_0) = U(s_0)s_0 + \nabla U(s_0)$.   By choosing the constant $c$, which amounts to shifting the origin of time, we can
eliminate  the last term from $C$.   The new value satisfies $\metrictwo{A,C}=0$.  This finishes our proof of the formulas for $A,B,C$, once the reader notices 
$$\nabla U(A) =  \nabla U(v_0 s_0) = \nabla U(s_0)/v_0^2. $$ 

It remains to consider the error term.  The series $tQ_2(1/t,\log t/t)$ differs from the required series $Q(1/t,\log t/t)$ only in that it allows the presence of terms of the form
$\frac{(\log t)^{k+1}}{t^k}$, $k\ge 1$.  We will now show that such terms do not occur.  Assume that $q(t) = At + B\log t+C+ t g(s,\sigma)$ where $g(s,\sigma)$ is a convergent power series in $s=1/t$ and $\sigma=\log t/t$.  Note that $q(t)$ can be factorized as $q(t) = t(A+ B\sigma+Cs+ g(s,\sigma))$.  Then by the homogeneity of the Newtonian potential,
$$\nabla U(q(t))= s^2\nabla U(A+ B\sigma+Cs+ g(s,\sigma)).$$
It follows that $\ddot q(t)$ must also admit a factor of $s^2$.  Using the fact that $\dot s = -s^2$ and $\dot\sigma = s^2-s\sigma$ we find
$$\ddot q(t) = s^2 h(s,\sigma) + s\sigma^2 g_{\sigma\sigma}(s,\sigma)$$
where we have gathered together several terms which have a factor of $s^2$.  It follows that $g_{\sigma\sigma}(s,\sigma)$ must be divisible by $s$, so the series $g(s,\sigma)$ cannot contain any terms involving monomials $s^0\sigma^k$, $k\ge 2$.  In other words every monomial in $Q_2(1/t,\log t/t)$ has at least one factor of $1/t$ so $tQ_2(1/t,\log t/t)$ can be written in the form $Q(1/t,\log t/t)$ as required.

{\sc Negative time asymptotics.} Deriving the negative time asymptotics is not as simple as replacing $t \to + \infty$ by $t \to -\infty$ in the basic expansion (\ref{eq_ChazyAsymptotics}).
Indeed $\log(t)$ for $t< 0$ is problematic, either being a purely imaginary number or undefined.  Instead we will use (\ref{eq_ChazyAsymptotics}) together with time reversal symmetry to derive the correct formula.

Let $q(t)$ be any solution which is hyperbolic as $t\into -\infty$.  Then $\tilde q(t) = q(-t)$ is a solution hyperbolic as $t\into +\infty$ and we can apply what we already proved to see that there is an asymptotic expansion
$$\tilde q(t) = \tilde At+\tilde B\log t + \tilde C +\tilde  f(1/t, \log t/t)\qquad t\into +\infty$$ 
$$\begin{aligned}
\tilde A &= \tilde v_0  \tilde s_0\\
\tilde B &= -\nabla U (\tilde v_0  \tilde s_0 )= -\nabla U(\tilde A)\\
\tilde C &= \frac{\tilde s_1}{\tilde \rho_1} -\frac{\log(\tilde \rho_1\tilde v_0)}{\tilde v_0^2}\tilde \nabla U(\tilde s_0).
\end{aligned}
$$
By definition $q(t)=\tilde q(-t)$ so we automatically get 
$$q(t) = \tilde A(-t)+ \tilde B\log (-t) + \tilde C +\tilde  f(1/(-t), \log(-t)/(-t))\qquad t\into-\infty.$$ 
which we will write in the form
\begin{equation}
\label{eq_ChazyAsymptoticsNegative}
q(t) =  At+ B\log |t|+ C +  f(1/t, \log |t|/t)\qquad t\into-\infty
\end{equation}
with $A= -\tilde A$, $B=\tilde B$ , $C=\tilde C$ and $f(1/t, \log |t|/t) = \tilde f(1/(-t), \log(-t)/(-t))$.

It remains to express the coefficients $A,B,C$ in terms of  $s_0,v_0,s_1,\rho_1$.  \begin{lemma}\label{lemma_timereversal}
The parameters $s_0,v_0,s_1,\rho_1$ of the solution $q(\tau)$ are related to the parameters $\tilde s_0,\tilde v_0,\tilde s_1,\tilde \rho_1$ of $\tilde  q(\tau)$ by
$$s_0=\tilde s_0\qquad v_0 = -\tilde v_0\qquad s_1=\tilde s_1 \qquad \rho_1 = \tilde\rho_1.$$
\end{lemma}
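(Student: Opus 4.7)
The plan is to exploit the time-reversal symmetry of the blown-up system~\eqref{eq_odex}, namely the involution $\sigma:(\rho,s,v,w)\mapsto(\rho,s,-v,-w)$ composed with $\tau\mapsto -\tau$, which manifestly preserves the ODE (the right-hand sides are homogeneous of degree $0$ under this action once the time reversal is included). Since the rescaling $dt = r\,d\tau$ is even in the velocities, normalizing both $q$'s and $\tilde q$'s $\tau$-parametrizations by $\tau(0)=\tilde\tau(0)=0$ at $t=0$ gives the pointwise identity
$$(\tilde\rho,\tilde s,\tilde v,\tilde w)(\tilde\tau) = (\rho,s,-v,-w)(-\tilde\tau).$$
Letting $\tilde\tau\to+\infty$ identifies the $\omega$-limit of $\tilde q$ in $\CEh_+$ with the $\sigma$-image of the $\alpha$-limit of $q$ in $\CEh_-$, immediately yielding $\tilde s_0=s_0$ and $\tilde v_0=-v_0$.

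For the linearization parameters I would apply Theorem~\ref{th_analyticlinearizationv1} on both sides: to $q$ near $(0,s_0,v_0,0)\in\CEh_-$ (where $-v_0>0$ makes this the local unstable manifold, approached as $\tau\to-\infty$) and to $\tilde q$ near $(0,s_0,-v_0,0)\in\CEh_+$. The leading terms of the expansion~\eqref{eq_rho} read
$$\rho(\tau) = \rho_1 e^{-v_0\tau}+\ldots,\qquad \tilde\rho(\tilde\tau) = \tilde\rho_1 e^{-\tilde v_0\tilde\tau}+\ldots,$$
and analogously for $s$. Substituting $\tilde\tau=-\tau$ together with the already-established $\tilde v_0=-v_0$ and $\tilde s_0=s_0$, and matching coefficients of the leading exponential $e^{v_0\tilde\tau}$ in the $\rho$ expansion immediately forces $\tilde\rho_1=\rho_1$. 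Matching the coefficient of $e^{v_0\tilde\tau}$ in the $\tilde s$ expansion then yields $\tilde s_1=s_1$; the coefficient of $\tilde\tau\,e^{v_0\tilde\tau}$ provides a consistency check using $\tilde\nabla U(s_0)/\tilde v_0 = -\tilde\nabla U(s_0)/v_0$.

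The only mildly subtle point is verifying that the analytic linearization theorem applies with the same algebraic form on the unstable side $\CEh_-$ as on the stable side $\CEh_+$. This is built into Theorem~\ref{th_analyticlinearizationv1}, which is stated for a neighborhood of all of $\hat\CE=\CEh_-\cup\CEh_+$: the linearized flow~\eqref{linflow} and the expansion~\eqref{eq_rho} make sense verbatim for $v_0<0$, the role of ``stable direction'' simply being replaced by ``unstable direction''. An alternative and more conceptual route would verify that $\Phi$ itself is equivariant under time reversal, by checking that the linearized flow~\eqref{linflow} commutes with the candidate action $(s_0,v_0,s_1,\rho_1)\mapsto(s_0,-v_0,s_1,\rho_1)$ -- which boils down to the identity $\alpha(s_0,-v_0)=-\alpha(s_0,v_0)$ -- and then invoking uniqueness of the analytic linearization; but the coefficient-matching above is more elementary and self-contained.
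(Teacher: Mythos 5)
Your proof is correct and takes essentially the same route as the paper's: time-reversal symmetry gives $(\tilde\rho,\tilde s,\tilde v,\tilde w)(\tilde\tau)=(\rho,s,-v,-w)(-\tilde\tau)$, from which $s_0=\tilde s_0$, $v_0=-\tilde v_0$ follow by taking limits, and $\rho_1=\tilde\rho_1$, $s_1=\tilde s_1$ follow by matching leading terms of the linearized expansions under the substitution $\tilde\tau=-\tau$. The paper phrases the last step as computing limits of $\exp(\pm v_0\tau)\rho(\tau)$ and $\exp(\pm v_0\tau)(s(\tau)-s_0+\cdots)$ rather than matching series coefficients, but the content is identical.
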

Using this we have
\begin{equation}
\label{eq_ChazyNegative}
\begin{aligned}
A &= v_0 s_0\\
B &= \nabla U ( v_0 s_0 ) = \nabla U(A)\\
C &= \frac{s_1}{\rho_1} -\frac{\log(\rho_1(- v_0))}{ v_0^2}\tilde \nabla U(s_0)=\frac{s_1}{\rho_1} -\frac{\log(\rho_1| v_0|)}{ v_0^2}\tilde \nabla U(s_0).
\end{aligned}
\end{equation}

As a corollary we have: 
\begin{proposition} 
\label{prop_A=sv}
The Chazy parameter $A$ as $t \to \pm \infty$ is related to the corresponding equilibrium equation $(s_0, v_0)$
as $\tau \to \pm \infty$ by $A =v_0s_0$ regardless of whether we are considering the asymptotics as  $t, \tau \to + \infty$ or $t, \tau \to - \infty$.
\end{proposition}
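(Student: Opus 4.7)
The plan is to treat this proposition as a direct corollary of Theorem~\ref{thm_Chazy} together with Lemma~\ref{lemma_timereversal}, which were both established in the preceding discussion. No new analytic machinery is needed; the only work is sign bookkeeping for the $t\to-\infty$ case.

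For $t,\tau\to+\infty$, the formula $A=v_0 s_0$ is already contained verbatim in the Chazy expansion \eqref{eq_Chazy} of Theorem~\ref{thm_Chazy}, so nothing further is required.

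For $t,\tau\to-\infty$, I would reduce to the forward case by the time-reversal trick used just above: given a solution $q(t)$ hyperbolic as $t\to-\infty$, converging in blown-up coordinates to the equilibrium $(0,s_0,v_0,0)\in\CEh_-$ as $\tau\to-\infty$, form $\tilde q(t):=q(-t)$, which is hyperbolic as $t\to+\infty$ with equilibrium parameters $(\tilde s_0,\tilde v_0)$. By the forward case just handled, the Chazy velocity of $\tilde q$ satisfies $\tilde A=\tilde v_0\tilde s_0$. Lemma~\ref{lemma_timereversal} gives $\tilde s_0=s_0$ and $\tilde v_0=-v_0$, while the relation $A=-\tilde A$ follows from $A=\lim_{t\to-\infty}\dot q(t)=-\lim_{t\to+\infty}\dot{\tilde q}(t)=-\tilde A$. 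Combining,
\[
A \;=\; -\tilde A \;=\; -\tilde v_0\,\tilde s_0 \;=\; -(-v_0)\,s_0 \;=\; v_0 s_0,
\]
which is the desired uniform formula.

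The only conceptual point worth flagging, rather than a technical obstacle, is that the statement might appear sign-ambiguous: for $\tau\to-\infty$ one has $v_0=-\sqrt{2h}<0$, yet the asymptotic velocity $A$ at $t=-\infty$ points in the same direction as $s_0$ when traced backwards in time. The proposition is saying precisely that the two sign conventions (the sign of $v_0$ on $\CEh_\pm$, and the sign of $A$ as a limit at $\pm\infty$) conspire so that the single product $v_0 s_0$ captures both cases without any case distinction. This uniform statement is the point of the corollary, and is what will later allow us to identify the equilibrium components $\CEh_\pm$ with the spaces of incoming and outgoing Chazy scattering data.
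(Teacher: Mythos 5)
Your proof is correct and follows exactly the route the paper takes: the $t\to+\infty$ case is read off from Theorem~\ref{thm_Chazy}, and the $t\to-\infty$ case is reduced to it via time reversal using $A=-\tilde A$ together with Lemma~\ref{lemma_timereversal}'s relations $\tilde s_0=s_0$, $\tilde v_0=-v_0$, which is precisely how the paper arrives at \eqref{eq_ChazyNegative} and then states Proposition~\ref{prop_A=sv} as an immediate corollary.
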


\begin{proof}[Proof of  Lemma~\ref{lemma_timereversal}]
We have
$$s_0 = \lim_{t\into-\infty}\frac{q(t)}{|q(t)|} = \tilde s_0\qquad v_0 =\lim_{t\into-\infty}\frac{q(t)\cdot \dot q(t)}{|q(t)|} =  -\tilde v_0$$
but $s_1,\rho_1$ require a bit more effort.  

First consider $\rho_1$. From formula \eqref{eq_stableunstablelinearizedflow}, we should have
$$\rho_1=\lim_{\tau\into -\infty}\exp(v_0\tau)\rho(\tau)\qquad \frac{1}{\rho_1} =\lim_{\tau\into -\infty}\exp(-v_0\tau)r(\tau)$$
Using the known  formula \eqref{eq_r} for $\tilde q$ and $s_0=\tilde s_0, v_0 = -\tilde v_0$ to see that
\begin{equation*}
\begin{split}
r(\tau) = \tilde r(-\tau) &  = \frac{1}{\tilde \rho_1}\exp( \tilde v_0(-\tau)) - \frac{U(\tilde s_0)}{\tilde v_0^2} + \ldots \\ 
& = \frac{1}{\tilde \rho_1}\exp( v_0\tau) - \frac{U(s_0)}{v_0^2} + \ldots.
\end{split}
\end{equation*}
Multiplying by $\exp(-v_0\tau)$ and taking the limit shows $1/\rho_1 = 1/{\tilde \rho_1}$.

Moving on to $s_1$, note that from formula \eqref{eq_stableunstablelinearizedflow}, we should have
$$
\begin{aligned}
\tilde s_1 = \lim_{\tau\into +\infty}\exp(\tilde v_0\tau)(\tilde s(\tau)-\tilde s_0+(\tilde \rho_1/\tilde v_0)\tilde\nabla U(\tilde s_0)\tau)\\
s_1 = \lim_{\tau\into -\infty}\exp(v_0\tau)(s(\tau)-s_0+(\rho_1/v_0)\tilde\nabla U(s_0)\tau).
\end{aligned}
$$
Then we have
$$
\begin{aligned}
s_1 &= \lim_{\tau\into -\infty}\exp(v_0\tau)(s(\tau)-s_0+(\rho_1/v_0)\tilde\nabla U(s_0)\tau)\\
&= \lim_{\tau\into +\infty}\exp(v_0(-\tau))(s(-\tau)-s_0+(\rho_1/v_0)\tilde\nabla U(s_0)(-\tau))\\
&= \lim_{\tau\into +\infty}\exp(\tilde v_0\tau)(\tilde s(\tau)-\tilde s_0+(\tilde \rho_1/\tilde v_0)\tilde\nabla U(\tilde s_0)\tau)\\
&= \tilde s_1.
\end{aligned}
$$
The second line is just changing the variable from $\tau$ to $-\tau$ and the limit direction.  The third line uses the definition $s(-\tau) = \tilde s(\tau)$ and what we already know about $s_0, v_0, \rho_1$.  Two minus signs in the first factor and  last term cancel out.
\end{proof}

The following proposition summarizes the relation between the parameters of the asymptotic expansions of a forward hyperbolic solution and its time reversal using the notation $T$ for the operation of time reversal.  It follows immediately from the formulas  $A = -\tilde A, C= \tilde C$ and Lemma~\ref{lemma_timereversal}.
\begin{proposition} 
\label{prop:timereversal} If a solution $q(t)$ has Chazy asymptotic parameters $(A,C)$ in the distant future, then
its  time reversed  solution $Tq (t) = q (-t)$ has Chazy asymptotic parameters $(-A, + C)$ for the distant past.
If $q$, written in blown-up variables,  tends to the  equilibrium   $(0, s_0, v_0, 0) \in \CEh_+$ as $\tau \to + \infty$,   then $Tq$, written in blown-up variables, tends to the   
equilibrium   $(0, s_0, -v_0, 0) \in \CEh_-$ as $\tau \to -\infty$.  
\end{proposition}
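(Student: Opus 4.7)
My plan is to read the proposition off directly from the negative-time Chazy expansion \eqref{eq_ChazyAsymptoticsNegative} and from Lemma~\ref{lemma_timereversal}; no new analysis is needed, only a careful unwinding of definitions. The proposition is in effect a coordinate-free summary of bookkeeping already carried out in the preceding pages.

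For the statement about Chazy parameters, I would simply substitute $t \mapsto -t$ in the forward-time expansion. If $q(t) = At + B\log t + C + f(1/t,\log t/t)$ as $t \to +\infty$, then
\begin{equation*}
Tq(t) = q(-t) = (-A)\, t + B\log|t| + C + f\!\left(-\tfrac{1}{t},\, \tfrac{\log|t|}{-t}\right) \qquad \text{as } t \to -\infty,
\end{equation*}
so the past Chazy parameters of $Tq$ are exactly $(-A,+C)$. This is the content of the identities $A_{\text{past}}(Tq) = -A_{\text{future}}(q)$ and $C_{\text{past}}(Tq) = C_{\text{future}}(q)$ already isolated in passing from \eqref{eq_ChazyAsymptotics} to \eqref{eq_ChazyAsymptoticsNegative}.

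For the statement about equilibria, I would apply Lemma~\ref{lemma_timereversal} with the roles reversed from how it was first used: let the proposition's $Tq$ play the role of the lemma's $q$ (hyperbolic as $\tau \to -\infty$), so that $T(Tq)=q$ plays the role of the lemma's $\tilde q$ (hyperbolic as $\tau \to +\infty$). The lemma then asserts $s_0(Tq)=s_0(q)=s_0$ and $v_0(Tq)=-v_0(q)=-v_0$, so the $\alpha$-limit of $Tq$ in blown-up coordinates is $(0,s_0,-v_0,0)$. Since $v_0>0$ by hypothesis, $-v_0<0$ and this equilibrium lies in $\CEh_-$, as claimed.

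The only real obstacle is the notational hazard that the lemma and the proposition both use the letter $q$ for solutions hyperbolic in opposite time directions, so one must be careful about which solution plays which role under the substitution. A convenient consistency check is Proposition~\ref{prop_A=sv}: the universal formula $A = v_0 s_0$, combined with the equilibrium relation $v_0 \mapsto -v_0$ under $T$, immediately reproduces the Chazy-parameter relation $A \mapsto -A$, confirming the two halves of the statement are compatible.
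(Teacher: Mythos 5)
Your proof is correct and follows essentially the same route as the paper: the paper's own proof is the one-line remark that the proposition ``follows immediately from the formulas $A=-\tilde A$, $C=\tilde C$ and Lemma~\ref{lemma_timereversal},'' which is precisely what you unwind, being appropriately careful about the role-swap between the lemma's backward-hyperbolic $q$ and the proposition's forward-hyperbolic $q$.
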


 \begin{example}\label{ex_Kepler}
	For the planar two-body problem, $d=n=2$, we can use $q=q_2-q_1\in\R^2\setminus 0$ to parametrize $\hat\E$ which has dimension $D=2$.  The mass norm is proportional to the ordinary Euclidean norm:
	$$\normtwo{q}= \sqrt{\mu} |q|\qquad \mu = \frac{m_1m_2}{m_1+m_2}.$$  
	The timescale change $dt=\normtwo{q}\,d\tau$ is almost the same as the one usually employed to solve the Kepler problem as a function of the eccentric anomaly, so we modify the standard formulas for the solution, for example, those in \cite{Pollard}.  For the hyperbolic case, we have  
	$$\begin{aligned}
	r(\tau) &= \normtwo{q(\tau)} =  a\sqrt{\mu}(e \cosh(\omega \tau)-1)\\  q(\tau)&=(ae-a\cosh(\w\tau), a\sqrt{e^2-1}\sinh(\w\tau)) \\
	t(\tau) &=\frac{a}{\w}(e\sinh(\w\tau)-\w\tau)
	\end{aligned}
	$$
	where $\w=\sqrt{2h}$, $a = \frac{m_1m_2}{2h}$ is the semimajor axis, and $e$ is the eccentricity.  For simplicity, we have rotated the hyperbola so that the perihelion occurs on the positive $x$-axis, as in Figure~\ref{fig:hyperbola}.
	
	It's interesting to compute the scattering parameters $A,C,A',C'$ for these solutions, as shown in the figure.  Taking the limits as $\tau\into\pm\infty$ of $s(\tau) = q(\tau)/r(\tau)$ gives the asymptotic shapes in backward and forward time
	$$s_0 = \frac{1}{e\sqrt{\mu}}(-1,-\sqrt{e^2-1})\qquad s_0' = \frac{1}{e\sqrt{\mu}}(-1,\sqrt{e^2-1}).$$
	These are unit vectors in the direction of  the asymptotes. The limiting values of $v$ are $\mp \w$ respectively and so 
	$$A = \frac{\w}{e\sqrt{\mu}}(1,\sqrt{e^2-1})\qquad A' = \frac{\w}{e\sqrt{\mu}}(-1,\sqrt{e^2-1}).$$
	
	All of the vectors $s$ have length $|s| = 1/\sqrt{\mu}$ and the shape potential $U(s)$ is constant.  Hence the tangential gradient vanishes and the formula for $C,C'$ simplify to
	$$C=\frac{s_1}{\rho_1}\qquad C'=\frac{s_1'}{\rho_1'}.$$
	We have
	$$\rho_1'=\lim_{\tau\into\infty}\exp{\w\tau}\rho(\tau) = \lim_{\tau\into\infty}\frac{\exp{\w\tau}}{a\sqrt{\mu}(e \cosh(\w \tau)-1)}=\frac{2}{ae\sqrt{\mu}}$$
	and a similar computation gives the same result for $\rho_1$.  Further computation with the formulas above gives
	$$s_1'=\lim_{\tau\into\infty}\exp{\w\tau}(s(\tau)-s_0) = \frac{2}{e^2\sqrt{\mu}}(e^2-1,\sqrt{e^2-1})$$
	and similarly
	$$s_1=\lim_{\tau\into-\infty}\exp{(-\w\tau)}(s(\tau)-s_0) = \frac{2}{e^2\sqrt{\mu}}(e^2-1,-\sqrt{e^2-1}).$$
	Hence
	$$C = \frac{a}{e}(e^2-1,-\sqrt{e^2-1})\qquad C' = \frac{a}{e}(e^2-1,\sqrt{e^2-1}).$$
	
	The explicit calculation for $q(\tau)$ immediately gives the asymptotic expansion as $\tau\into\infty$
	$$q(\tau) = \exp(\w\tau)(\frac{a}{2},\frac{a\sqrt{e^2-1}}{2}) + (ae,0) + \ldots.$$ 
	Using the fact that $\tilde\nabla(U(s))=0$, a little algebra shows that this agrees with our formula (\ref{eq_q}) evaluated at $(s_0',\sqrt{2h},s_1',\rho_1')$.  
\end{example}

\section{Hyperbolic Scattering}\label{sec_hyperbolicscattering}
In this section we study orbits which tend to infinity hyperbolically in both forward and backward time. Recall the limiting set of these orbits is the equilibrium manifold $\CEh = \CEh_+ \cup \CEh_-$, where 
$$ \CEh_{\pm} = \{(0, s, v, 0): \; s \in \hat S,\, v = \pm \sqrt{2h},\, h>0 \}.$$
For simplicity, a point $(0, s,v, 0) \in \CEh$ will be written as $(s, v)$ in the rest of this section. 

As noted previously, at the infinity manifold the differential equation (\ref{eq_odeinfinity}) is independent of the potential function $U(s)$ and all its solutions appear to be nonsingular.  But once we move away from it, we will have to insist that our solutions avoid the singular set $\Delta$ of the potential.  So in what follows we will exclude the singular set $\Delta$ from  the domain of the flow $\phi_\tau$ of the blown-up differential equations \eqref{eq_odex}.   
Since collisions are inevitable for the collinear $ n $-body problem, we also require that the space dimension $d$ is at least 2.  

Having done this, it follows from the analytic linearization Theorem \ref{th_analyticlinearizationv1}  that $\CEh_+$ is an attractor for the flow on the extended phase space $[0, +\infty) \times \hat \Sigma$.  Moreover, its  local stable manifold $W^s _{loc}(\CEh_+)$  is a nonempty open subset of the extended phase space, which is analytically foliated into the individual local stable manifolds $W^s _{loc}(q)$ of the equilibrium points $q \in \CEh_+$.  Similarly $\CEh_-$ is a repeller whose local unstable manifold $W^u_{loc} (\CEh_-)$ is a nonempty open subset of the extended phase space analytically foliated into local unstable manifolds $W^u _{loc}(p)$ of its points $p \in \CEh_-$.  
%Note that the local study of the equilibria applies to a full neighborhoods within $\hat{\Sigma} \times \R$, including points where $\rho = 1/r <0$.  However, if we restrict to the set $\rho\ge 0$ of physical interest  we still get open sets in this manifold with boundary.  

Let 
\[\begin{aligned}
\CH_+ &= W^s(\CEh_+)=\{z:\phi_\tau(z)\into \CEh_+ \text{ as }\tau\into+\infty\}\\
 \CH_- &= W^u(\CEh_-)=\{z:\phi_\tau(z)\into \CEh_- \text{ as }\tau\into -\infty\}
 \end{aligned}\]
denote the corresponding global stable and unstable manifolds.  
Note that a point $z$ belongs to the global stable or unstable manifold if and only if there is some finite time such that the orbit enters the corresponding  local stable or unstable manifolds, $W^s _{loc}(\CEh_+)$ or $W^u_{loc} (\CEh_-)$.  
It follows that $\CH_\pm$ are also nonempty open subsets of the extended phase space.  
It follows from the asymptotic estimates in Section~\ref{sec_asymptotics} that orbits with initial conditions $x\in \CH_+$ tend to infinity hyperbolically in forward time in the sense of Chazy. Similarly, if $x\in \CH_-$, its orbit tends to infinity hyperbolically in backward time.  
Finally, let
$$\CH = \CH_- \cap \CH_+.$$
Orbits with initial conditions $z \in \CH$ are bi-hyperbolic, that is, they tend to infinity hyperbolically in both time directions.
\begin{proposition} \label{prop_CHnonempty}
$\CH$ is a nonempty open subset of the extended phase space. Moreover 
\begin{equation}
\label{eq_NonemptyCH} \CH \cap \big( (0, +\infty) \times \hat{\Sigma} \big) \ne \emptyset.
\end{equation}

\end{proposition}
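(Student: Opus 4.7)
The plan has three steps: show that $\CH_\pm$ are open, exhibit an explicit bi-hyperbolic orbit lying on the infinity manifold, and then use openness to pass from that boundary orbit to an interior orbit in the original phase space.

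First, for openness of $\CH_+$: by the analytic linearization theorem, $W^s_{loc}(\CEh_+)$ is an open neighborhood of $\CEh_+$ in the extended phase space. A point $z$ lies in $\CH_+$ iff its forward orbit eventually enters $W^s_{loc}(\CEh_+)$, i.e.\ iff there exists $\tau_0 \ge 0$ with $\phi_{\tau_0}(z)\in W^s_{loc}(\CEh_+)$. Since $\phi_{\tau_0}$ is a local diffeomorphism wherever it is defined (and equation \eqref{eq_odex} extends analytically past $\rho=0$, so flows through boundary points are defined on two-sided neighborhoods), the set $\phi_{\tau_0}^{-1}(W^s_{loc}(\CEh_+))$ is an open neighborhood of $z$ contained in $\CH_+$. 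Thus $\CH_+$ is open; the argument for $\CH_-$ is symmetric, so $\CH=\CH_+\cap\CH_-$ is open.

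Next, for nonemptiness of $\CH$, I exploit the half-great-circle solutions of Proposition~\ref{prop_infinityflow}. Each such solution, parameterized by an orthonormal pair $(\xi,\eta)$ in the mass metric, is a heteroclinic on the infinity manifold connecting $p=(0,-\xi,-\sqrt{2h},0)\in\CEh_-$ to $-p=(0,\xi,\sqrt{2h},0)\in\CEh_+$, provided the curve $s(\tau)=\xi\sin\theta(\tau)+\eta\cos\theta(\tau)$ avoids the collision set $\Delta$. The curve traces a half-great-circle on $S$, which is 1-dimensional, while $\Delta\cap S$ is a finite union of great subspheres, each of codimension $d\ge 2$. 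Hence for a generic choice of $(\xi,\eta)$ with $\xi\in\hat S$ and $\eta\perp\xi$, $|\eta|=1$, the entire half-great-circle lies in $\hat S$. For such a choice the orbit is defined for all $\tau\in\R$, has $\alpha$-limit $\{p\}\subset\CEh_-$ and $\omega$-limit $\{-p\}\subset\CEh_+$, and therefore lies in $\CH_-\cap\CH_+=\CH$. This produces an explicit point $z_0\in\CH$ (which, incidentally, sits on the boundary $\{\rho=0\}$).

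Finally, for \eqref{eq_NonemptyCH}: the extended phase space $[0,+\infty)\times\hat\Sigma$ is a manifold with boundary, and the interior $(0,+\infty)\times\hat\Sigma$ is dense. Since $\CH$ is an open subset containing the boundary point $z_0$, any ambient neighborhood of $z_0$ in the extended phase space contains points with $\rho>0$, so $\CH\cap\bigl((0,+\infty)\times\hat\Sigma\bigr)\neq\emptyset$.

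The only nontrivial step is the second one, and the only subtle point there is the genericity argument against collisions on the half-great-circle; this is where the standing hypothesis $d\ge 2$ is used crucially. Everything else is a direct application of the analytic linearization theorem and elementary point-set topology.
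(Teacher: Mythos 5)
Your proof is correct and follows essentially the same route as the paper's: openness of $\CH_\pm$ (via the analytic linearization and preimages under the flow), nonemptiness from the nonsingular half-great-circle heteroclinics on the infinity manifold guaranteed by $d\ge 2$ and the codimension of $\Delta$, and then the density/dimension observation to upgrade a boundary point of $\CH$ to an interior one. The only cosmetic difference is that you re-derive the openness of $\CH_\pm$ inside the proof, while the paper establishes it in the paragraph just before the proposition and simply invokes it.
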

\begin{proof}
  Since $\CH_\pm$ are open sets of extended phase space, $\CH = \CH_- \cap
  \CH_+$ is open. To see that $\CH$ is non-empty, recall that the extended phase
  space includes the boundary manifold at infinity and
  Proposition~\ref{prop_infinityflow} shows that there are simple solutions in
  this boundary manifold connecting $\CEh_-$ and $\CEh_+$. Since the space
  dimension is at least two, the singular set $\Delta\subset S$ has codimension
  at least two. It follows that most of these connecting solutions from
  Proposition~\ref{prop_infinityflow} do not encounter $\Delta$. The initial
  conditions of these orbits are points of $\CH$, so $\CH\ne \emptyset$. As
  $\CH$ is open, these bi-hyperbolic points on the infinity manifold
  $\{0\}\times\hat{\Sigma}$ must each be contained in some open ball
  $\mathcal{B}_p\subset\CH$. By a dimension count, for $\mathcal{B}_p$ to be
  open it cannot be contained in the infinity manifold
  $\{0\}\times\hat{\Sigma}$. Hence \eqref{eq_NonemptyCH} holds.
\end{proof}

\begin{remark}
 \eqref{eq_NonemptyCH} shows that   there exist  bi-hyperbolic solutions of the $n$-body problem that are close to infinity
 but do not lie within the  infinity manifold.  In other words, there are real bi-hyperbolic solutions. 
\end{remark}

If $x\in \CH_+$, then there is a unique equilibrium point $q = (s_+,v_+) \in \hat{\CE}_+$, such that $\phi_\tau(x)\into q$ as $\tau\into+\infty$.  This defines an analytic mapping $\pi_+:\CH_+\into \CEh_+$.  Similarly there is an analytic mapping $\pi_-:\CH_-\into \CEh_-$ assigning to each $x\in \CH_-$ its limiting equilibrium point $p$ as $\tau\into-\infty$.  

\subsection{The Scattering Relation} We are interested in the scattering  problem:  which equilibrium points $p\in \CEh_-$ and $q\in \CEh_+$ can be connected by bi-hyperbolic orbits.
\begin{definition} We say that $p\in \CEh_-$ and $q\in \CEh_+$  are {\em related by hyperbolic scattering} if 
$$\pi_-(x) = p \text{ and }\pi_+(x) = q\text{ for some }x\in\CH.$$
We denote this relation by $p\rightarrow q$.
The {\em hyperbolic scattering relation} is the corresponding subset $\mathcal{HS}\subset \CEh_-\times\CEh_+$.
We will also say that $x$ connects $p$ to $q$. 
\end{definition}

For any relation,  we can define the image and preimage of particular points.  
For $p\in\CEh_-$ and $q\in\CEh_+$ we have
$$\CHS(p) = \{q\in\CEh_+: p\rightarrow q\}\qquad \CHS^{-1}(q) = \{p\in\CEh_-:p\rightarrow q\}.$$
So $\CHS(p)$ is the set of all final states which can be reached from the initial state $p$ via hyperbolic scattering. 

The scattering relation can be reformulated in terms of the Chazy parameter $A$. 
Recall that the map  $(s, v) \mapsto A = vs$ relates the Chazy parameter of a hyperbolic solution to the corresponding equilibrium  $(s, v)  \in \hat{\CE}$ it converges to, see Proposition \ref{prop_A=sv}. So far we have only defined Chazy parameter $A$ for hyperbolic solutions of the $n$-body problem (solutions that are not contained in the infinity manifold in the blow up variables). However we can obviously extend this relation to heteroclinic solutions contained in the infinity manifold.

This map, upon restriction either to $\CEh_+$ or to $\CEh_-$ is a diffeomorphism onto $\Eh$ with inverse $A \mapsto \pm (A/\|A \|,  \|A \|)$.  
Thus we obtain an  induced scattering  relation on $\Eh$.  We continue to use the symbol ``$\rightarrow$'', as it is induced by our original relation. 
To be more specific, we define  $A \rightarrow A'$ if and only if 
$$ p = (-A/\|A \| , -\|A \|) \rightarrow q= (A'/\|A' \|,  \|A '\| ). $$
If $\gamma$ is a bi-hyperbolic orbit connecting $p$ to $q$ we will  also say that $\gamma$ connects $A$ to $A'$.  
The scattering relation, expressed in Chazy parameters, defines a ``relation'' in the standard sense of set theory: 
which is to say, a subset of $\Eh \times \Eh$.  

We have the following basic theorem concerning this scattering relation on $\Eh$.
\begin{theorem}    
\label{thm:scatteringreln}Let $\rightarrow$ denote the scattering relation on $\Eh$.   
For all $A, A' \in \Eh$ we have 
\begin{enumerate}
\item[(i).] {\bf energy conservation:} $A \rightarrow A' \implies \|A \| = \|A'\|$.
\item[(ii).] {\bf reflexivity:}    $A \rightarrow A $.
\item[(iii).] {\bf T-symm:} $A \rightarrow A' \iff -A' \rightarrow -A$. 
\item[(iv).] {\bf dilation:}  For any $k >0$, $A \rightarrow A' \iff k A \rightarrow k A'$.
\item[(v).] {\bf rotation:} For the diagonal action of any $R \in O(d)$, $A \rightarrow A' \iff R A \rightarrow RA'$.
\item[(vi).] {\bf reversibility:} $A \rightarrow A' \iff A' \rightarrow A$
\item[(vii).] {\bf openness:}  For generic $A \in \Eh$ the  image  $\CHS(A) := \{A':  A \rightarrow A' \}$ of $A$ under the scattering relation  contains  an open subset of the  sphere 
$\Sh (\| A \|)  = \{ A' \in \Eh : \|A ' \| = \|A \| \}$.  The closure of this open set contains $A$. 
\end{enumerate}
\end{theorem}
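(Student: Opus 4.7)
Items (i)--(vi) are essentially formal consequences of the framework already established. For (i), the energy $h$ is conserved along every bi-hyperbolic orbit, so by Proposition \ref{prop_A=sv} both $\normtwo{A}^2$ and $\normtwo{A'}^2$ equal $2h$. For (ii), fix $\xi=A/\normtwo{A}$ and $p=(-\xi,-\normtwo{A})\in\CEh_-$; the $(D-2)$-parameter family of half-great-circle heteroclinics of Proposition \ref{prop_infinityflow} connecting $p$ to $-p$ correspond to $A\to A$ in the extended scattering relation, and a codimension count---each collision hyperplane has codimension $d\ge 2$ in $\E$, so the ``bad'' tangent directions $\eta\in\xi^\perp$ for which $\operatorname{span}(\xi,\eta)$ meets a collision hyperplane nontrivially form a proper subspace---allows us to pick $\eta$ so the arc avoids $\Delta$. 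Item (iii) is Proposition \ref{prop:timereversal} re-expressed in Chazy parameters. Item (iv) follows from the Kepler scaling $q(t)\mapsto \mu q(t/\mu^{3/2})$, a symmetry of Newton's equations since $U$ is homogeneous of degree $-1$, under which asymptotic velocities rescale by $\mu^{-1/2}$. Item (v) is the diagonal $O(d)$ invariance of Newton's equations. Item (vi) is then the composition of (iii) with (v) applied to $R=-I_d\in O(d)$: given $A\to A'$, part (iii) yields $-A'\to -A$, then (v) with $R=-I_d$ yields $A'\to A$.

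The substantive item is (vii). The plan is to perturb the half-great-circle heteroclinics at infinity into $\rho>0$ using the analytic linearization of Theorem \ref{th_analyticlinearizationv1}, and track the induced motion of the $\omega$-limit point in $\CEh_+$. Fix $A\in\Eh$ in a generic subset (specified below) and $p=(-A/\normtwo{A},-\normtwo{A})\in\CEh_-$; pick $\eta$ as in (ii) so the infinity heteroclinic $\gamma_0$ from $p$ to $-p$ is collision-free. By Theorem \ref{th_analyticlinearizationv1}, $W^u(p)$ is an analytic $D$-disk with linearizing coordinates $(\rho_1,s_1)$, $\rho_1\ge 0$, $s_1\in T_{s_0}S$. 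Orbits starting at such $(\rho_1,s_1)$ with $\rho_1$ small remain close to $\gamma_0$ throughout their passage near infinity and subsequently enter $W^s_{\mathrm{loc}}(\CEh_+)$, defining an analytic near-infinity scattering map $\Psi(\rho_1,s_1)\in \CEh_+$ whose image lies in $\CHS(A)$. Energy conservation confines $\Psi$ to $\CEh_+\cap\{v_+=\normtwo{A}\}\cong \Sh(\normtwo{A})$; modding out by the flow direction in $W^u(p)$ makes $\Psi$ a map between $(D-1)$-dimensional manifolds.

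The crux, and main obstacle, is showing that $\Psi$ has nonzero Jacobian somewhere on its reduced domain for generic $A$. The plan is to expand $\Psi$ in powers of $\rho_1$ about the infinity heteroclinic. At $\rho_1=0$ the flow is the explicit infinity flow of Proposition \ref{prop_infinityflow}, so $\Psi|_{\rho_1=0}\equiv A$ is completely degenerate. The first-order-in-$\rho_1$ correction is driven entirely by the $\rho\tilde\nabla U(s)$ term of \eqref{eq_odex} integrated against the variational flow \eqref{eq_stableunstablelinearizedflow} along $\gamma_0$; the resulting linear integral transform of $\tilde\nabla U(\gamma_0(\tau))$ dictates the leading-order deflection. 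Generic nondegeneracy of this transform---an open-dense condition on $A$, depending on $\gamma_0$ and the masses---cuts out the ``generic'' subset of $\Eh$ in the statement. For such $A$, the inverse function theorem produces an open subset of $\Sh(\normtwo{A})$ inside $\CHS(A)$; as $\rho_1\to 0$ these open sets accumulate at $A$, so their union has closure containing $A$, completing (vii).
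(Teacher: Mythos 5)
Items (i)--(vi) of your proposal track the paper's argument closely: the energy equation at infinity for (i), collision-free half-great-circle heteroclinics for (ii), time-reversal (Proposition \ref{prop:timereversal}) for (iii), the Kepler scaling $\delta_\lambda$ for (iv), the diagonal $O(d)$ action for (v), and the composition of (iii) with the parity element $-I_d$ for (vi). The one small quibble in (ii) is that the bad tangent directions $\eta$ do not form a subspace; they form a proper closed subset because $\Delta$ has codimension $\ge 2$ in $S$, and it is this codimension count (made precise in Proposition \ref{prop_CHnonempty}) that is the real content.

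The genuine gap is in (vii). Your outline correctly identifies the mechanism — perturb the infinity heteroclinic $\gamma_0$ into $\rho_1>0$ via the analytic linearization, the map is degenerate at $\rho_1 = 0$, the first-order effect in $\rho_1$ comes from the $\rho\nabla U(s)$ term integrated along $\gamma_0$ — but it then simply \emph{declares} that ``generic nondegeneracy of this transform'' is an open-dense condition on $A$, without establishing that the condition is ever satisfied. A determinant that depends analytically on $A$ could a priori vanish identically; you need a witness. This is precisely where the bulk of the paper's work goes. The paper first restricts to planar, noncollinear $s_0$ and takes $\eta = s_0^\perp$, so the half-great-circle is a rigid rotation $s(\theta) = R(\theta)\xi$; this makes the first-order integral computable and yields the explicit matrix $\bar D$ with blocks $\bar D_{ij} = -\frac{2m_j}{r_{ij}^3} v_{ij}v_{ij}^T$. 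Lemma \ref{lem: xi nondeg} then shows that $\ker\bar D$ is exactly $3$-dimensional (spanned by $\xi$ and the two translations), so that $\bar D$ restricted to $\{\text{center of mass } 0\}\cap s_0^\perp$ is injective, giving full rank $D-1$ for the derivative of the near-infinity scattering map; a separate argument is needed for $d>2$ where the perpendicular blocks behave differently. Only after this explicit nondegeneracy on a nonempty set does the analyticity argument (Theorem \ref{thm:generic}) — which also requires constructing an analytic extension $\eta(s_0)$ to non-planar configurations and arguing the relevant domain is connected so analytic continuation propagates the nonvanishing of the Jacobian — yield the ``generic'' qualifier. Also note your reduction ``modding out by the flow direction'' is not quite the right normalization: the paper instead fixes $\normtwo{s_1}=1$ and lets the domain be $\{\rho_1\}\times\{s_1 : \metrictwo{s_0,s_1}=0,\ \normtwo{s_1}=1\}$, and the full rank comes from combining the $\rho_1$-derivative (a single vector $\bar D\eta$) with the $s_1$-derivatives (which carry a factor $\rho_1$ that must be scaled out). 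Without Lemma \ref{lem: xi nondeg} and the two-stage planar-then-generic structure, the proposal does not close.
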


Of all these properties the openness property (vii) is the most nontrivial one. Proving it will require some serious work. 
The genericity requirement is as follows.  Suppose $A = vs$ for a restpoint $(s,v)$.  If $s$ is a planar configuration, that is, all bodies lie in one plane, then the condition is simply that $s$ not be collinear.  If $s$ is not planar, we are only able to say that $s$ must not lie in the zero set of a certain determinant.
We postpone the  proof of openness to Theorems \ref{thm:openimage} and \ref{thm:generic}.  

\begin{proof} [Proof of Theorem \ref{thm:scatteringreln}]    
(i).  If $A \rightarrow A'$ then there is a bi-hyperbolic orbit $\gamma$ that goes from $(-A/ \|A \|, - \|A \|)$ to $(A'/\|A' \|, \|A '\| )$.
Let $h$ be the energy of $\gamma$. By continuity, the energy of the $\al$ and $\omega$-limit of $\gamma$ in $\CE$ must be $h$ as well. By \eqref{eq_energyinfty}, this means $\|A \|^2 = \|A' \|^2 =2h$.

(ii). Set $p = (-A/ \|A \|, - \|A \|) \in \hat{\CE}_-$, by Proposition \ref{prop_infinityflow} and the proof given in Proposition \ref{prop_CHnonempty}, there is an orbit $\gamma \subset \hat \Sigma$ entirely contained in the infinity manifold, which connects $p$ to its antipodal equilibrium $-p = ( A /\|A \|, \|A \|) \in \CE_+$, so that  $p \rightarrow -p$. (See also the remark on free particles and the flow at infinity immediately preceding Proposition \ref{prop_infinityflow}).  Both $p$ and $-p$ have the same Chazy parameter $A$.  So $A \rightarrow A$.

(iii).  Suppose $A \rightarrow A'$ with a connecting bi-hyperbolic orbit $\gamma$. The time reversed path  $ T \gamma (\tau) = \gamma(-\tau)$ of $\gamma(\tau)$ is again a bi-hyperbolic orbit.  We saw in proposition \ref{prop:timereversal} that
$T \gamma$ has  past Chazy parameter   $-A'$, and an identical argument shows that it has   future Chazy parameter  $-A$. 
Thus $-A' \rightarrow - A$.  

(iv). The $ n $-body problem admits the dilational symmetries $\delta_{\lambda}$, $\lambda > 0$, which in Newtonian time is given by 
$$ q(t) \mapsto  ( \delta_{\lambda} q) (t) : = \lambda q (\lambda^{-3/2} t), $$
and in the rescaled time by 
$$ \delta_{\lambda} \gamma (\tau) = \lambda \gamma (\lambda^{-1/2} \tau). $$
The dilational symmetry can be worked out easily in our blown up variables where it acts on the equilibrium set $\CEh$ by $\delta_{\lambda} (s, v) = (s, \lambda^{-1/2} v)$
and hence it acts on Chazy parameters by $A \mapsto  \lambda^{-1/2} A$.
Setting $k = \lambda^{-1/2}$ and suppose that $A \rightarrow A'$ via a bi-hyperbolic orbit $\gamma(\tau)$. 
Then the path $\delta_{\lambda} \gamma(\tau)$ is also bi-hyperbolic and connects $kA$ to $kA'$ and so $k A \rightarrow kA'$.
The inverse relation is obtained using the inverse transformation $\delta_{\lambda} ^{-1} = \delta_{1/\lambda}$.

(v).  The proof here is almost identical to (iv).  Use the fact that the action of $O(d)$ maps solutions to solutions and
preserves their hyperbolicity.  So if $\gamma$ connects $A$ to $A'$ then $R\gamma$ connects $RA$ to $RA'$.

(vi). Write $P = - I_d$ (P for `parity') for  the action of  $- I_d $, the negative of the identity.   $- I_d \in O(d)$ and acts on configuration space by $q \mapsto -q$. By (v), we have $A \rightarrow A' \iff -A \rightarrow -A'$.  Now apply the  time reversal map T of   (iii) to get $-A \rightarrow -A' \iff A' \rightarrow A$. 

(vii).  See Theorem \ref{thm:openimage} and \ref{thm:generic}.  

\end{proof}

\subsection{The Scattering Map}

To say that the scattering relation $p\rightarrow q$ holds is to assert the existence of a connecting orbit lying in the intersection of the unstable manifold of $p$ with the stable manifold of $q$.  By specifying which unstable orbit connects with which stable orbit, we arrive at a refinement of the scattering relation called the {\em scattering map}, which we now describe.

Recall that the local stable and unstable manifolds $W^s_{loc}(\hat{\CE}_+)$ and $W^u_{loc}(\hat{\CE}_-)$ are open subsets of the extended phase space which can be parametrized by quadruples $(s_0,v_0,\rho_1,s_1)$ where the first two entries parametrize the equilibrium manifold $\hat{\CE}_+$ or $\hat{\CE}_-$ and the second two entries parametrize the stable and unstable manifolds of the corresponding equilibrium points.  
In these coordinates, the flow on the local stable and unstable manifolds keeps the first two entries fixed and is linear on the second two entries as in (\ref{eq_stableunstablelinearizedflow}). 
We will use a separate coordinate system of this type for each of  $W^s_{loc}(\hat{\CE}_+)$ and $W^u_{loc}(\hat{\CE}_-)$.  

To parametrize the {\em orbits} in $W^s_{loc}(\hat{\CE}_+)$ and $W^u_{loc}(\hat{\CE}_-)$, we should identify quadruples which map  to one another under the linearized flows.  The scattering parameter $C$ given in \eqref{eq_Chazy} and \eqref{eq_ChazyNegative} is an orbit invariant.   
Unfortunately it is not well-defined at $\rho_1=0$ so we will have to use a more complicated approach.  Instead of the parameters $(\rho_1,s_1)$, introduce the parameter 
$$\gamma = (\rho_1, s_1-\frac{\rho_1\log(\rho_1|v_0|)}{v_0^2}\tilde\nabla U(s_0)).$$

Note that 
$$
\gamma =  (\rho_1, \rho_1 C), \text{ if } \rho_1 \ne 0.
$$
Then one can check that the linearized flow identifies pairs $\g, \g'$ if and only if $\g'= k \g$ for some constant $k>0$.  Introducing the notation $[\g]$ for the corresponding equivalence classes, we have parametrizations of the orbits in  $W^s_{loc}(\hat{\CE}_+), W^u_{loc}(\hat{\CE}_-)$ by triples  $(s_0,v_0,[\g])$.  
We call these triples ``orbit parameters''.  
{ Note that an orbit is constant (an equilibrium point) if and only if $\gamma = 0$ and that an orbit lies in the infinity manifold $\Sigma$ if and only if $\rho_1 = 0, \gamma = (0,s_1)$ for some $s_1 \perp s_0$.} 
The orbit parameters are related to the Chazy scattering parameters $A,C$  by $A=v_0 s_0$ while $C$ is the ratio of the two components of $\g$ provided that the first coordinate $\rho_1$ of $\g$ is positive.    
The inverse relation is $(s_0, v_0, [\g]) = (\pm A/\|A\|, \pm \|A \|, [(1, C)])$ with the $\pm$ depending on whether we are parameterizing $W^s_{loc}(\hat{\CE}_+)$ or $W^u_{loc}(\hat{\CE}_-)$.  

Now we can introduce a map describing  the scattering behavior of bi-hyperbolic orbits.
\begin{definition} 
\label{def:scattering}
Let $(s_0,v_0,[\g])$ and $(s_0',v_0',[\g'])$ parametrize the space of non-constant orbits of  $W^u_{loc}(\hat{\CE}_-)$ and $W^s_{loc}(\hat{\CE}_+)$, respectively,
so that $\gamma, \g' \ne 0$.  Denote these orbit spaces by  $W^u_{loc}(\hat{\CE}_-)^{orb}$ and  $W^s_{loc}(\hat{\CE}_+)^{orb}$ 
respectively, and the open subspaces of these orbit spaces  consisting of those  orbits which also lie in $\CH$ 
by $(W^u_{loc}(\hat{\CE}_-)\cap \CH)^{orb}$  and $(W^s_{loc}(\hat{\CE}_+)\cap \CH)^{orb}$.
Then the  {\em hyperbolic scattering map} is the map
$$F: (W^u_{loc}(\hat{\CE}_-)\cap \CH)^{orb} \into (W^s_{loc}(\hat{\CE}_+)\cap \CH)^{orb}$$
$$F(s_0,v_0,[\g]) = (s_0',v_0',[\g'])$$
which sends  the local orbit parameters of a  bi-hyperbolic orbit $\gamma$  as $\tau\into-\infty$ to those of the same orbit as $\tau\into +\infty$.  
Alternatively,  if $\rho_1 > 0$ for $\g$ (and hence $\rho_1' > 0$ for $\g'$) we can write the map using Chazy parameters:   $$F(A,C) = (A', C'),  \qquad  \rho_1 > 0.$$
\end{definition}

It's clear that the scattering map is a refinement of the scattering relation.   For  $p = (s_0,v_0)\in \hat{\CE}_-$ and $q = (s'_0,v'_0)\in \hat{\CE}_+$, we have that $p\into q$ if and only if there exist orbit parameters $P= (s_0,v_0,[\g])$ and $Q=(s_0',v_0',[\g'])$ with $Q=F(P)$. 
In particular 
$$\CHS(p) = \{ q: (q,[\gamma']) = F(p, [\gamma]) \text{ for some } [\gamma], [\gamma'] \}.$$ 

Note also that  it follows from Theorem \ref{thm:scatteringreln}, property (i), that if   $F(P) = Q$  with $P$ and $Q$
 as just described, then   $v'_0 = - v_0>0$.
 
 {\sc  Topological Considerations regarding the  domain of $F$.}

The equivalence classes $[\g] = [\rho_1, w]$  which we have used to parametrize orbits are rays through the origin  in 
 the real  vector space $\R \oplus T_{s_0} S \cong \E$.
The space of all such rays in a real vector space is a sphere, so a copy of $S$ in our case.  
In this way we see that $W^u_{loc}(\CEh_-)^{orb} \cong \CEh_- \times S$  and $W^s_{loc}(\CEh_+)^{orb} \cong \CEh_+\times S$
where $W^u_{loc}(\CEh_-)^{orb}$, $W^s_{loc}(\CEh_+)^{orb}$ are as per definition \ref{def:scattering}. 

%Now these local stable and unstable manifolds allow for   $\rho < 0$ which are non-physical with no obvious relation to Newtonian scattering.
We are not really    interested in these entire local orbit spaces, but rather   only those rays for which  
$\rho_1 \ge 0$, since if $\rho_1 < 0$ the corresponding trajectory immediately enters into the non-physical
region $\rho < 0$ of our phase space.    Insisting $\rho_1 \ge 0$ determines a closed hemisphere within
$S$.    We have seen that the  equator $\rho_1 = 0$, the boundary of this hemisphere, represents trajectories tangent to the infinity manifold $\Sigma$
and that $F$ is the identity when restricted to this equator. The Chazy parameter $C \in T_{s_0} S = s_0 ^{\perp} \subset \E$   is an affine parameterization
of the open upper hemisphere $\rho_1 > 0$ via $C \mapsto [1,C]$.   See figure \ref{fig:halfsphere}.  The closed hemisphere itself is topologically a unit disc in $T_{s_0} S$,
and represents the ``radial'' compactification of the Chazy parameters, i.e. of  the tangent space $T_{s_0} S$.    We will  denote this disc by $D(T_{s_0} S)$
and write it as
$$D (T_{s_0}  S) = T_{s_0} S \cup \partial_{+\infty} D(T_{s_0} S)$$
with  $\partial_{+\infty} D(T_{s_0} S)$ corresponding to the equator, which is to say the directions tangent to $\Sigma$, and diffeomorphic to a sphere $S^{D-1}$.
We  denote  the corresponding  disc bundle over the sphere by $D(S) = \bigcup_s D(T_s S)$.  It is a real  analytic fiber bundle over $S$.
More generally,  
take any smooth manifold $M$ , form the vector bundle $\R \oplus TM$  over $M$ and then form the corresponding  disc bundle $D(M) \to M$ whose fibers
are the non-negative rays $\rho_1 \ge 0$ within the vector spaces  $\R \oplus T_m M$. 
  In our case we are interested in the disc bundle  $D(\hat S) \subset D(S)$.  The
boundary of our  disc bundle $\partial D (\hat S)$ is identified with trajectories at  infinity,  $\rho_1 = 0$. 

\begin{figure}[ht]
\scalebox{0.4}{\includegraphics{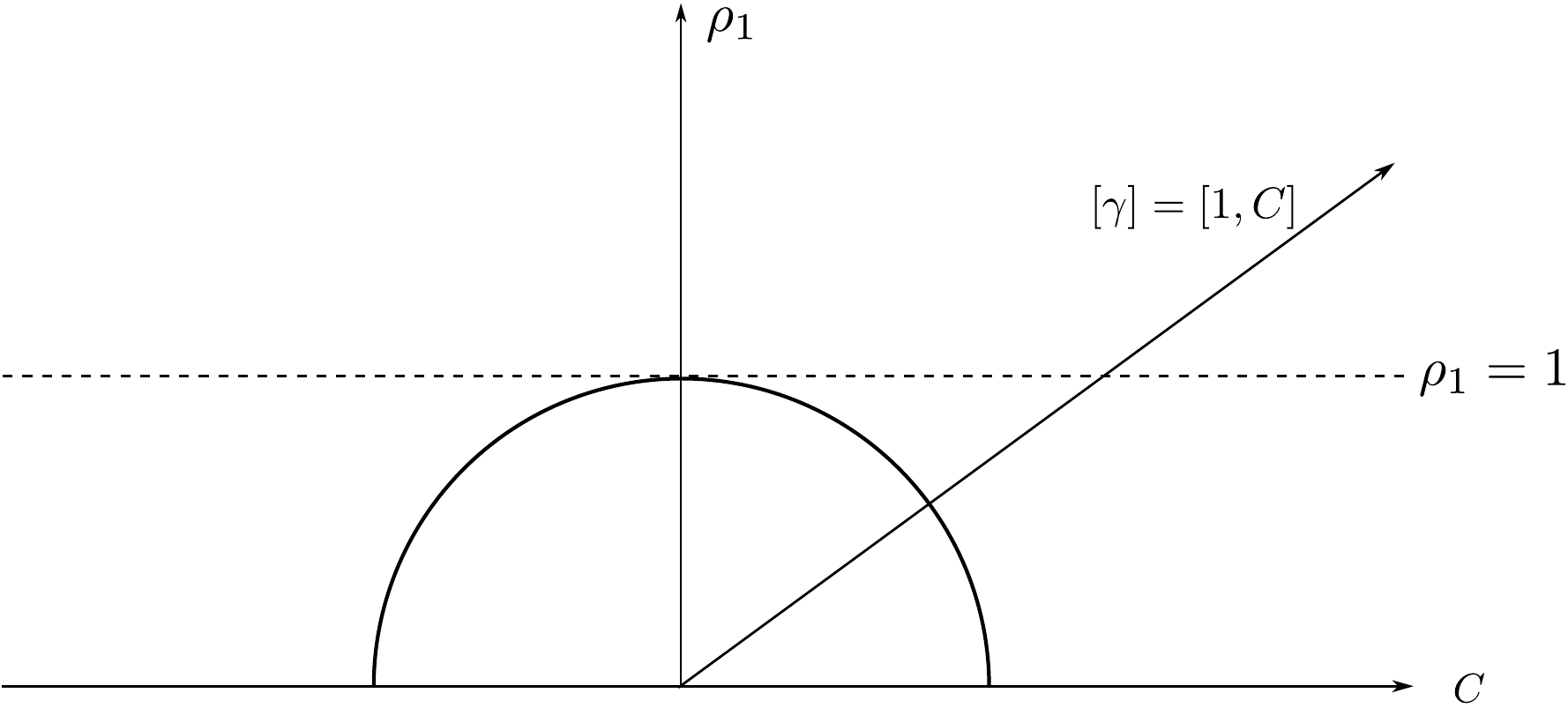}}
\caption{Affine coordinates on the rays through a half-space yield
a realization of the scattering parameter C.}
\label{fig:halfsphere} 
\end{figure}

{\sc Energy considerations.  }  We know the entire scattering map if we know its restriction to states
of energy $1/2$, which is the same as $\|A \| =1$, or $|v_0| = 1$.
Indeed we have seen that energy conservation implies that   $F(A, C) = (A', C') \implies \|A \| = \|A' \|$.
One verifies that the space time dilation, $\delta_{\lambda} q (t) = \lambda q (\lambda ^{-3/2} t)$ in terms of
Newtonian solutions, acts on Chazy parameters according
to $\delta_{\lambda} (A, C) = (\lambda ^{-1/2} A,  \lambda C)$.
It follows that  
$F(\delta_{\lambda} (A, C))=  \delta_{\lambda} (F(A,C))$ when $\rho_1 >0$.  

\begin{theorem}   The scattering map $F$ is determined by its restriction to those states
having energy $1/2$.  So restricted, the domain of the scattering map $F$ is
an  open subset of the disc bundle $D(\hat S) \to \hat S$  which was  obtained by compactifying the   tangent bundle $T\hat S$ as described above. 
 $F$ is analytic on its domain and equals the identity on the boundary of the disc bundle, $\partial D (\hat S)$
 which corresponds to   scattering along the infinity manifold. 
 Moreover, if $T(A,C) = ( - A,C)$ is the  time reversal operation as it applies to
  Chazy parameters (see Proposition \ref{prop:timereversal})  then   $T \circ F \circ T \circ F = Id$.  
\end{theorem}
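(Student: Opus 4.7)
The theorem has four assertions: reduction to energy $1/2$, openness of the domain in $D(\hat S)$ together with analyticity, identity on $\partial D(\hat S)$, and the involution $T\circ F\circ T\circ F=\mathrm{Id}$. I will handle them in order, with most of the work being assembly of earlier machinery.

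For the reduction, I invoke $F(\delta_\lambda(A,C))=\delta_\lambda F(A,C)$, already established in the paragraph preceding the theorem: conjugating by an appropriate dilation recovers $F$ at any energy $h>0$ from the restriction to $h=1/2$. Having fixed energy $1/2$, the parameter $v_0$ is pinned down by the sign conventions on $\CEh_\pm$, so orbit parameters reduce to $(s_0,[\gamma])$ with $s_0\in\hat S$; the physical requirement $\rho_1\ge 0$ selects the closed hemisphere of rays, which the topological discussion preceding the statement identifies with $D(T_{s_0}\hat S)$, and these fibers assemble into $D(\hat S)$ as $s_0$ varies. The domain of $F$ consists of those orbit parameters whose orbit is bi-hyperbolic; since Proposition~\ref{prop_CHnonempty} states that $\CH$ is open in the extended phase space and the passage from orbit parameters to initial conditions is continuous, the domain is open. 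Analyticity then follows directly from Theorem~\ref{th_analyticlinearizationv1}: the linearizing charts near $\CEh_\pm$ are analytic and make orbit parameters into natural coordinates, so along any compact arc of bi-hyperbolic orbit the composition ``parameter $\to$ initial condition $\to$ analytic flow by \eqref{eq_odex} $\to$ parameter'' is analytic.

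To prove identity on the boundary, I start with a point of $\partial D(\hat S)$: it has $\rho_1=0$, hence the corresponding orbit lies entirely in the infinity manifold $\Sigma$ and is given explicitly by Proposition~\ref{prop_infinityflow}, joining $(0,-\xi,-\sqrt{2h},0)$ to $(0,\xi,\sqrt{2h},0)$. Proposition~\ref{prop_A=sv} yields $A=v_0 s_0=\xi$ at both ends, so the $\hat S$-base points agree under the identification by Chazy's $A$. I then expand the explicit $s(\tau)$ of Proposition~\ref{prop_infinityflow} as $\tau\to\pm\infty$ and pass to the limits defining $s_1,s_1'$ given in \eqref{eq_stableunstablelinearizedflow}; the leading behavior $\cos\theta\sim 2e^{-\sqrt{2h}|\tau|}$ produces $s_1=s_1'=2\eta$ in the common tangent space $\xi^\perp=T_{\pm\xi}\hat S$, so $F$ fixes the fiber coordinate as well.

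Finally, the functional equation follows from time-reversibility (Proposition~\ref{prop:timereversal}). If $q$ is a bi-hyperbolic orbit with past parameter $P$ and future parameter $Q=F(P)$, then the time-reversed orbit $Tq$ has past parameter $T(Q)$ and future parameter $T(P)$, so $F(T(Q))=T(P)$; substituting $Q=F(P)$ and applying $T$ to both sides, with $T^2=\mathrm{Id}$, gives $P=T\circ F\circ T\circ F(P)$. The main subtle point is the boundary computation: one must keep the past and future identifications of $D(\hat S)$ consistent at $\rho_1=0$ and correctly extract the leading asymptotic behavior of $s(\tau)$ near the two equilibria; beyond that, the argument is bookkeeping on top of Theorem~\ref{th_analyticlinearizationv1} and Propositions~\ref{prop_infinityflow}, \ref{prop_A=sv}, and \ref{prop:timereversal}.
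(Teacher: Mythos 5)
Your proposal is correct and follows essentially the same approach as the paper: dilation equivariance for the reduction to energy $1/2$, Theorem~\ref{th_analyticlinearizationv1} plus analyticity of the flow for analyticity of $F$, the explicit $s(\tau)$ from Proposition~\ref{prop_infinityflow} with the computation $s_1 = s_1' = 2\eta$ for identity on the boundary, and the time-reversal bookkeeping of Proposition~\ref{prop:timereversal} for $T\circ F\circ T\circ F = \mathrm{Id}$. The only real difference is that you inline the equator computation, whereas the paper's proof simply gestures at "the discussion immediately preceding" and actually defers the detailed $s_1^\pm = 2\eta$ calculation to Proposition~\ref{thm:atinfinity} (which, awkwardly, appears after the theorem in question); your version is therefore somewhat more self-contained, but the mathematics is identical.
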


 %Remark.  $F$ might be analytic near the infinity section in the $\bar T \hat S$ model. I did not check.
 
\begin{remark}
	We  expect that   the  domain of $F$  is not  all of $D(\hat S)$:   orbits hyperbolic in the past need not be hyperbolic in the future.
	   Following \cite{Chazy}, call  an orbit
	``hyperbolic elliptic'' if one or more of the particle pairs $\{i, j \} \subset \{1, \ldots , n\}$ are bound in the distant future,
	meaning the   lim sup of their mutual distance $r_{ij}$ is bounded as $t \to + \infty$.     According to Alekseev (\cite{Alekseev} or Table 1 in p. 61 of \cite{ArnoldEncyc}) the set of 3-body orbits which are  hyperbolic 
	in the distant past and hyperbolic-elliptic in the distant future has positive measure.  Such  orbits almost certainly sweep out an open subset of phase space.  Assuming this openness, we can  picture   $D(\hat S)$ as being comprised  of two open sets separated
	by a closed measure zero boundary.  The two open sets  correspond to the  hyperbolic-hyperbolic type orbits  which form the domain of $F$, and the 
	 hyperbolic-elliptic type orbits.  They are separated by a  measure zero set  which contains   orbits ending in collision, 
	and, more notably perhaps, orbits  of   hyperbolic-parabolic type:  those for which the  bound pair motion  degenerates to a zero energy Kepler solution, so that $r_{ij} \sim t^{2/3}$.
	Understanding  the two open sets and their boundary   could be very interesting, but might be very difficult.
\end{remark} 
\color{purple}

\color{black}
 
  \begin{proof}   Analyticity of $F$ follows from the analytic linearization lemma, and the fact
  that the flow is analytic.   Except for the final assertion regarding $FTFT$, the rest of the theorem 
  was  proved in the discussion immediately preceding the theorem's statement. 
  
  To verify  $FTFT = Id$ use  the $(s,v,[\gamma])$ representation.  Suppose that a bi-hyperbolic orbit $\gamma \subset \CH$
  has  representation $(s_0, -v_0, [\gamma])$ in the infinite past and  $(s_0' , v_0 , [\gamma'])$
  in the infinite future so that $F(s_0, -v_0, [\gamma]) = (s_0', v_0 , [\gamma'])$. 
   The time reversal  $T \gamma$ of $\gamma$ is represented by  $(s_0 ' , - v_0 , [ \gamma'])$ in the infinite past,
  and $(s_0, v_0, [\gamma])$ in the infinite future, so that $F(s_0' , -v_0, [\gamma']) = (s_0, v_0, [\gamma])$.
  (Regarding the action of $T$, see Proposition \ref{prop:timereversal} and the paragraph preceding it.)
Now  $T(s, \pm v, [\gamma]) = (s, \mp v, [\gamma])$,   
  so   we  have just shown that $F(T(F(s_0, -v_0, [\gamma]))) = (s_0, v_0, [\gamma]) = T(s_0, -v_0, [\gamma])$, 
  or   $FTF = T$.
  Use $T^2 = Id$ to finish off.  
  
  \end{proof}

\subsection{Scattering at Infinity}
Understanding the full hyperbolic scattering map or relation is beyond our grasp right now.  It will be useful to restrict ourselves  to various subsets. Let $Z$ be any flow-invariant subset of   phase space.  Consider the smaller scattering map $F_Z = F|_Z$
and   relation
$$\CHS_Z= \{(p,q)\in \CE_-\times\CE_+: \pi_-(x) = p \text{ and }\pi_+(x) = q\text{ for some }x\in\CH\cap Z\}.$$
Clearly $\CHS_Z\subset  \CHS$.  We also use  the notations $\CHS_Z(p), \CHS_Z^{-1}(q), p\rightarrow_Z q$ with the obvious meanings.

Most of  what we know about  the scattering map arises from our precise understanding of the dynamics along the  infinity manifold $\Sigma$ as summarized by Proposition~\ref{prop_infinityflow}.  $\Sigma$ is  flow-invariant so we can form the restricted scattering relation $\CHS_{\Sigma}$ and map $F|_\Sigma$.
\begin{proposition} 
\label{thm:atinfinity}  The scattering map induced by the flow at infinity is obtained by restricting
$F$ to orbit parameters for which $\rho_1 = 0$, that is, $\gamma= (0,s_1)$.   In this case 
$$F_\Sigma(s_0,v_0,[0,s_1]) = (-s_0, -v_0,[0, s_1]). $$  
Expressed in partial Chazy parameters, this restricted $F$ is  the identity map
$$F(A, [0, s_1]) = (A, [0, s_1]).$$ 
The corresponding scattering relation is $\CHS_{\Sigma}(p) = -p$. 
\end{proposition}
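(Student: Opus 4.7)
The plan is to combine the explicit description of orbits in $\Sigma$ from Proposition~\ref{prop_infinityflow} with the linearized-flow formula \eqref{eq_stableunstablelinearizedflow} for the local stable/unstable manifolds, and to read off the orbit parameters $(s_0, v_0, [\gamma])$ at the past and future ends of any orbit lying in $\Sigma$.

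First I would check that the domain of $F_\Sigma$ is precisely the locus $\rho_1 = 0$. An orbit lies in $\Sigma$ iff $\rho(\tau) \equiv 0$; from \eqref{eq_stableunstablelinearizedflow} we have $\rho(\tau) = u(\tau)\rho_1$, so $\rho\equiv 0$ iff $\rho_1 = 0$. Under this constraint the definition of $\gamma$ preceding Definition~\ref{def:scattering} collapses to $(0, s_1)$, confirming the first part of the statement.

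Next I would extract the limiting equilibria from Proposition~\ref{prop_infinityflow}: every non-equilibrium orbit in $\Sigma$ has the form $s(\tau) = \xi\sin\theta(\tau) + \eta\cos\theta(\tau)$, $v(\tau) = \sqrt{2h}\tanh(\sqrt{2h}\tau)$ with $\xi,\eta$ orthonormal in $S$, and connects $p = (0, -\xi, -\sqrt{2h}, 0) \in \CEh_-$ to $-p = (0, \xi, \sqrt{2h}, 0) \in \CEh_+$. Consequently the past and future equilibria are related by $(s_0^+, v_0^+) = (-s_0^-, -v_0^-)$, and the Chazy parameter $A = v_0 s_0$ is manifestly preserved since $(-v_0)(-s_0) = v_0 s_0$.

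The remaining task is to verify that $[0, s_1]$ is unchanged. From \eqref{eq_stableunstablelinearizedflow} with $\rho_1 = 0$, the linearized flow reads $s(\tau) = s_0^{\pm} + u(\tau;v_0^{\pm})\, s_1^{\pm}$ on each linearization chart, so
\[
s_1^{\pm} \;=\; \lim_{\tau \to \pm\infty} e^{v_0^\pm \tau}\bigl(s(\tau) - s_0^{\pm}\bigr).
\]
A short expansion using $\cos\theta(\tau) \sim 2 e^{-\sqrt{2h}|\tau|}$ and $1 - |\sin\theta(\tau)| = O(e^{-2\sqrt{2h}|\tau|})$ yields $s_1^{\pm} = 2\eta$ at both ends. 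Hence the rays $[0, s_1^+]$ and $[0, s_1^-]$ coincide, which gives $F_\Sigma(s_0, v_0, [0, s_1]) = (-s_0, -v_0, [0, s_1])$; the relation $\CHS_\Sigma(p) = \{-p\}$ is then immediate. The only real work is this short asymptotic calculation; a minor bookkeeping point is that the charts for $s_1^+$ and $s_1^-$ are centered at different equilibria, but since $\eta \perp \xi$ and $T_{\xi}S = T_{-\xi}S = \xi^{\perp} \subset \E$, both $s_1^{\pm}$ naturally live in the same subspace and the identification is unambiguous.
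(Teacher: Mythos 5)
Your proposal is correct and follows essentially the same route as the paper's proof: identify the orbits in $\Sigma$ with the half-circles of Proposition~\ref{prop_infinityflow}, read off the limiting equilibria $p$ and $-p$, and compute $s_1^{\pm}$ via the exponential limits to conclude both equal $2\eta$. The one small point the paper makes explicit that you leave implicit is the nonemptiness of $\CHS_\Sigma(p)$ --- that some half-circle through $p$ avoids the collision locus (true because $d\geq 2$) --- which is needed to upgrade $\CHS_\Sigma(p)\subseteq\{-p\}$ to equality.
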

\begin{proof}
The orbits at infinity are  the half-circles of Proposition~\ref{prop_infinityflow}.  We must  delete the ``singular orbits'', i.e., those which hit the collision locus.
 These orbits connect equilibrium points $p=(s_0,-v_0)$ to $-p=(-s_0,+v_0)$ having  the same value of the Chazy parameter $A$.  Since the ambient dimension $d$ is at least 2, not all of the half-circles  are singular and it follows that the scattering relation is $\CHS_{\Sigma}(p) = -p$.  

To understand the scattering map $F_\Sigma$ we need to relate the orbit parameters $[\gamma]$ at the two endpoints.  These can be found by using the formulas for $s_1, \rho_1$ as noted in the proof of Lemma~\ref{lemma_timereversal}.  

Since $\rho(\tau) = 0$ all along the orbit, we have
$$\rho_1= \lim_{\tau\into \mp\infty} \exp(\pm v_0\tau)\rho(\tau) = 0$$
at both endpoints.  Using this, we have
$$s_1^- = \lim_{\tau\into -\infty} \exp(v_0\tau)(s(\tau)-s_0)\qquad s_1^+ = \lim_{\tau\into \infty} \exp(-v_0\tau)(s(\tau)+s_0).$$
Using the formula for $s(\tau)$ in Proposition~\ref{prop_infinityflow} it is easy to see that $s_1^- = s_1^+ =2\eta$.
\end{proof}

\begin{comment}
\begin{figure}[ht]
\scalebox{0.4}{\includegraphics{halfcircle.pdf}}
\caption{Perturbation parameters for a half-great circle.  Back off a bit from $\tau = - \infty$ and
$\tau = +\infty$, to arrive slightly away from the equilibrium endpoints, which are the   green points,
approximately $\pm p + u(\tau) w$.}
\label{fig:halfcircle} 
\end{figure}
\end{comment}

\subsection{Scattering Near Infinity}
What if we ``expand'' $\Sigma$, a bit?   Let $Z = Z(R) $ denote the invariant set consisting of all points $ x $ such that the orbit $ \phi_\tau(x) $ satisfies $\rho(\tau)< 1/R$ for all $\tau$  and let   $Z(R,K)$ be the set whose orbits   also satisfy $U(s(\tau))< K$ for all $\tau$. 
If $K$ is sufficiently large, then there will be a nonempty set of points on the infinity manifold in $Z(R, K)$.
Using the local structure near the equilibria, it is easy to see that $Z(R,K)\cap \CH$ and $Z(R)\cap \CH$ are nonempty open sets of phase space. 
We will denote the restricted scattering maps as $F_R, F_{R,K}$ and the restricted relations by  $\CHS_{R}$ and $\CHS_{R,K}$.  Similarly, we use the notation  $p\rightarrow_{R} q$ and $ p\rightarrow_{R,K} q$.  Note that adding restrictions leads to a smaller scattering relation.  So, for example, $\CHS_{R,K}\subset \CHS_{R} \subset \CHS$. Equivalently, $p\rightarrow_{R,K} q$ implies $p\rightarrow_{R} q$ which implies $p\rightarrow q$. 
 
Recall that the scattering relation at infinity is $\CHS_\Sigma(p) = -p$.  The next two results show that scattering via orbits sufficiently near infinity leads to final states near $-p$.  The first result  also requires a bound on the potential.
\begin{theorem}\label{th_scatteringnearinfinity}
Fix $K>0$ , $p\in\CEh_-$,  $\CV\subset\CEh_+$  a neighborhood of $-p$. Then for all   $R$ sufficiently  large  we have 
$\CHS_{R,K}(p)\subset \CV$. 
\end{theorem}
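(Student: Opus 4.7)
The plan is a compactness/perturbation argument: orbits in $Z(R,K)$ with $R$ large are small perturbations of orbits on $\Sigma$, and any compactness-limit is a half-circle heteroclinic (Proposition \ref{prop_infinityflow}) whose $\omega$-limit is forced to lie near $-p$. I argue by contradiction: suppose there exist $R_n\to\infty$ and orbits $\gamma_n\subset Z(R_n,K)\cap\CH$ with $\pi_-(\gamma_n)=p$ but $q_n:=\pi_+(\gamma_n)\notin\CV$. Fix a small $\sigma>0$, to be chosen at the end. Since $\gamma_n$ has $\alpha$-limit $p$ and is not constant, it has a first exit time $\tau_n$ from $B_\sigma(p)$ in the extended phase space. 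Translate time: $\tilde\gamma_n(\tau):=\gamma_n(\tau+\tau_n)$, so that $d(\tilde\gamma_n(0),p)=\sigma$ and $d(\tilde\gamma_n(\tau),p)<\sigma$ for all $\tau<0$.

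The bound $U(s)\le K$ keeps $s$ uniformly away from $\Delta$, the energy relation bounds $(v,w)$, and $\rho\le 1/R_1$, so the initial points $\tilde\gamma_n(0)$ lie in a compact subset of the extended phase space. Pass to a subsequence with $\tilde\gamma_n(0)\to x_0$; since $\rho(\tilde\gamma_n(0))<1/R_n\to 0$, the limit satisfies $\rho(x_0)=0$, placing $x_0\in\hat\Sigma$ with $d(x_0,p)=\sigma$. Continuous dependence on initial conditions for the analytic extended flow then yields uniform convergence $\tilde\gamma_n\to\tilde\gamma_\infty$ on every compact time interval, where $\tilde\gamma_\infty$ is the orbit through $x_0$ in $\hat\Sigma$. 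By Proposition \ref{prop_infinityflow}, $\tilde\gamma_\infty$ is a half-circle heteroclinic from some $p'\in\CEh_-$ to $-p'\in\CEh_+$; since $\tilde\gamma_\infty$ stays in $\overline{B_\sigma(p)}$ for all $\tau\le 0$, taking $\tau\to-\infty$ gives $d(p',p)\le\sigma$, whence $d(-p',-p)\le\sigma$.

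To finish, I invoke the analytic foliation by local stable manifolds near $\CEh_+$ provided by Theorem \ref{th_analyticlinearizationv1}: the map $\pi_+$ extends continuously to a neighborhood of $\CEh_+$ and satisfies $\pi_+(-p)=-p\in\CV$, so there is an open neighborhood $\CU$ of $-p$ with $\pi_+(\CU\cap\CH_+)\subset\CV$. Choose $\sigma$ small enough that $B_{3\sigma}(-p)\subset\CU$, and pick $T>0$ with $d(\tilde\gamma_\infty(T),-p')<\sigma$, giving $d(\tilde\gamma_\infty(T),-p)<2\sigma$. Uniform convergence on $[0,T]$ then produces $\tilde\gamma_n(T)\in B_{3\sigma}(-p)\subset\CU$ for all large $n$, and hence $q_n=\pi_+(\tilde\gamma_n)=\pi_+(\tilde\gamma_n(T))\in\CV$, contradicting $q_n\notin\CV$. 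The main obstacle is the time normalization: the orbits $\gamma_n$ are defined on all of $\R$ and may spend arbitrarily long times near $\CEh_-$, so one must center them by the first exit time from $B_\sigma(p)$; this is what pins down the $\alpha$-limit of the compactness-limit orbit to within $\sigma$ of $p$, after which the rigid half-circle structure on $\Sigma$ forces the $\omega$-limit to lie within $O(\sigma)$ of $-p$.
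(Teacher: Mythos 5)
Your compactness argument takes a genuinely different route from the paper. The paper's proof chooses a local cross-section to the flow inside $W^u(p)\cap\hat\Sigma$, observes that the part of it traced by orbits with $U\le K$ is compact, and, after thickening by $[0,\delta)$ in $\rho$ with $\delta=1/R$, invokes continuity of $\pi_+$ on that thickening. You instead pass to a limit at the level of whole trajectories, normalizing time by the first exit from $B_\sigma(p)$, so that the limit is an orbit in $\hat\Sigma$ whose $\alpha$-limit is pinned near $p$. The normalization is indeed the crux, and you identify it correctly; the rest of your estimates (the $U\le K$ compactness, forcing $\rho(x_0)=0$, the half-circle being antipodal so $d(-p',-p)\le\sigma$, and the use of continuity of $\pi_+$ near $\CEh_+$) are sound.

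There is, however, a gap where you invoke Proposition~\ref{prop_infinityflow} to conclude that $\tilde\gamma_\infty$ is a half-circle heteroclinic. The solutions described by that proposition all have $w(\tau)\ne0$, so it accounts only for the nonconstant orbits on $\hat\Sigma$; it does not exclude the possibility that the limit point $x_0\in\partial B_\sigma(p)\cap\hat\Sigma$ is itself an equilibrium of $\CEh_-$ distinct from but near $p$ (the equilibrium set has positive dimension, so such neighbors exist). If $\tilde\gamma_\infty\equiv x_0$ were constant, there would be no $T$ with $d(\tilde\gamma_\infty(T),-p')<\sigma$ and your concluding contradiction evaporates. This must be ruled out, and two quick repairs are available: (a) for $\sigma$ small each $\tilde\gamma_n(0)$ lies on $W^u_{loc}(p)$, and since $T_pW^u_{loc}(p)$ is the generalized eigenspace for $-v_0$, which is complementary to $T_p\hat\CE$ inside the tangent space, transversality gives $W^u_{loc}(p)\cap\hat\CE=\{p\}$, so $x_0\in W^u_{loc}(p)\cap\partial B_\sigma(p)$ cannot be an equilibrium; or (b) if $x_0\in\CEh_-$ then, by continuity of $\pi_-$ on the open set $\CH_-\supset\CEh_-$, $p=\lim_n\pi_-(\tilde\gamma_n(0))=\pi_-(x_0)=x_0$, contradicting $d(x_0,p)=\sigma>0$. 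With either patch in place, your argument goes through.
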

\begin{remark} If $K < U(p)$ it is possible that $\CHS_{R,K}(p)$ is empty. Nevertheless, the theorem still holds.
\end{remark} 
\begin{proof}
Consider the unstable manifold of $p$ {\it within the infinity manifold} $\hat{\Sigma}$. We can choose a local cross-section $\s$ to the flow  within this manifold which is diffeomorphic to   the  $  D-1 $ dimensional sphere $S$.  Let   $\s_K \subset \s$ be the subset   whose orbits satisfy $U(s(\tau))\le K$ for all $\tau$. The complement of $\s_K$ in $\s$ is open so $\s_K$ is compact.  Now for each point $x\in \s_K$, we have $\pi_+(x) = -p$
since the orbit through $x$ lies on the infinity manifold and limits to $p$ in the backward direction.
  Since $\pi_+$ is continuous,  there is some neighborhood $\CU(x)$ of $x$ in the full phase space such that $y\in\CU(x)$ implies $\pi_+(y)\in\CV$.  Since $\s_K$ is compact, there is some neighborhood $\CU$ of $\s_K$ itself with this property.  We can thicken the local cross section $\s_K$ in $\Sigma =\{\rho=0\}$  to a cross-section of the form $\s_K\times [0,\delta)$ where $u\in [0,\delta)$.  For $\delta$ sufficiently small, we will have $\s_K\times [0,\delta)\subset \CU$.  Then choose $R =1/\delta$.  If $x\in Z(R,K)\cap \CH$ then its orbit meets the cross-section $\s_K\times [0,\delta)$ and
$\pi_+(x)\in\CV$.  Since this holds for all such $x$,  $\CHS_{R,K}(p)\subset \CV$.
\end{proof}

If we want to avoid imposing a fixed upper bound on $U(s)$, we can't guarantee a uniform $1/R$ sufficiently small, but we can still find a neighborhood of the set of nonsingular orbits at infinity starting at $p$ such that scattering via orbits in this neighborhood always leads near $-p$.
\begin{theorem}\label{th_scatteringnearinfinityA}
Let $p\in\CEh_-$ and  let $\CV\subset\CEh_+$ be a neighborhood of $-p$.  Then there is a neighborhood $Z$ in phase space of the set of nonsingular orbits in $W^u(p)\cap \hat{\Sigma}$ such that  $\CHS_{Z}(p)\subset \CV$. 
\end{theorem}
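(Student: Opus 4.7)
The plan is to mimic the structure of the proof of Theorem~\ref{th_scatteringnearinfinity}, but to replace the use of a compact cross-section (which required the potential bound $U\le K$) by a purely pointwise continuity argument on the full set of nonsingular heteroclinic orbits inside $W^u(p)\cap \hat{\Sigma}$.

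First I would observe that by Proposition~\ref{prop_infinityflow} the orbits inside $W^u(p)\cap\hat{\Sigma}$ are precisely the half-great circles on the sphere $S$ issuing from $s_0$, where $p=(s_0,-\sqrt{2h})$, and that the nonsingular ones, namely those avoiding $\Delta$, are exactly the orbits that accumulate on $-p\in\CEh_+$ as $\tau\to+\infty$. In particular every point $x$ lying on a nonsingular orbit in $W^u(p)\cap\hat{\Sigma}$ belongs to $\CH=\CH_-\cap\CH_+$ and satisfies $\pi_-(x)=p$ and $\pi_+(x)=-p\in \CV$.

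Next I would use the continuity of $\pi_+\colon \CH_+\to \CEh_+$, which follows from the analytic linearization of Theorem~\ref{th_analyticlinearizationv1}, to choose, for each such $x$, an open phase-space neighborhood $\CU_x\subset \CH_+$ of $x$ with $\pi_+(\CU_x)\subset \CV$. Setting $Z_0:=\bigcup_x \CU_x$, where the union runs over all points $x$ on nonsingular orbits in $W^u(p)\cap\hat{\Sigma}$, produces an open phase-space neighborhood of this entire set, and by construction every $y\in Z_0\cap\CH$ satisfies $\pi_+(y)\in \CV$.

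Finally, to produce a flow-invariant $Z$ (as required by the definition of $\CHS_Z$), I would saturate under the flow and set $Z:=\bigcup_{\tau\in\R}\phi_\tau(Z_0)$. This is still open, is still a neighborhood of the (flow-invariant) set of nonsingular orbits in $W^u(p)\cap\hat{\Sigma}$, and satisfies $\pi_+(y)\in\CV$ for every $y\in Z\cap\CH$ because $\pi_+$ is constant along orbits. This gives $\CHS_Z(p)\subset\CV$ as claimed.

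The only genuine obstacle I see is the lack of compactness of the set of nonsingular orbits: here, in contrast with Theorem~\ref{th_scatteringnearinfinity}, we cannot uniformly thicken a single cross-section to a product set $\sigma_K\times[0,\delta)$, so we really must construct $Z$ as an arbitrary (possibly uncountable) union of local neighborhoods. This is acceptable because openness, not compactness, is all that is demanded of $Z$ in the statement; no uniform $R$ is being asserted.
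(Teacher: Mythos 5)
Your proposal is correct, and it rests on the same essential facts the paper uses — the continuity of $\pi_+$, the fact that every nonsingular orbit in $W^u(p)\cap\hat{\Sigma}$ has $\pi_+ = -p$, and the openness and flow-invariance of $\CH$. The difference is purely one of economy: where you build $Z$ by choosing a pointwise neighborhood $\CU_x$ around each point of each nonsingular orbit, taking an uncountable union, and then saturating under the flow, the paper takes the single global preimage
\[
Z \;=\; \CH \cap \pi_+^{-1}(\CV),
\]
which is automatically open (since $\CH$ is open and $\pi_+$ is continuous), automatically flow-invariant (since $\CH$ is invariant and $\pi_+$ is constant along orbits), and automatically contains all nonsingular orbits at infinity through $p$ (since those have image $-p\in\CV$ under $\pi_+$). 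Your worry about the lack of compactness and the need for a possibly uncountable union is legitimate in your framing but is precisely what the preimage construction sidesteps: instead of approximating $\pi_+^{-1}(\CV)$ from within by local neighborhoods and then re-saturating, one simply takes the preimage directly. Both arguments are valid; yours is just the long way around.
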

\begin{proof}
Let $Z =\CH \cap \pi_+^{-1}(\CV)$.  Then $Z$ is an open, invariant subset of phase space.  If $x\in\Sigma$ is a point on any nonsingular orbit at infinity with $\pi_-(x) = p$ then $x\in Z$, so $Z$ is a neighborhood of the nonsingular orbits in $W^u(p)\cap \Sigma$.
\end{proof}

Theorems \ref{th_scatteringnearinfinity} and \ref{th_scatteringnearinfinityA} assert a kind of continuity property of the scattering relation when  restricted to neighborhoods of the infinity manifold.  They can also be viewed as putting an upper bound on the image of $p$ under near-infinity scattering.  Next we turn to the more challenging question of lower bounds.  From what has been proved so far, it is conceivable that $\CHS(p) = \{-p\}$, i.e., that $p\rightarrow -p$ is the only scattering possible.  We want results showing that 
$\CHS(p)$ is nontrivial.  Our main result is  the  ``openness'' of $\CHS(p)$ described in property (vii) of theorem \ref{thm:scatteringreln}.   
The proof will involve perturbing a simple half-circle orbit at infinity and will actually show the stronger result that the image set $\CHS_{R,K}(p)$ contains an open set for certain $R,K$.  In other words, by scattering using bi-hyperbolic orbits which remain near infinity and are bounded away from the singularities, we can reach an open set of final states.  First we show that the image contains at least a curve, then we fatten the curve into the required open set.

Fix an equilibrium point  $p=(s_0,v_0)\in \CEh_-$.  
The local unstable manifold of $p$ is parametrized by $(s_0, v_0, \rho_1,s_1)$ as per equation (\ref{eq:alignedCoords}).  
We get the unstable manifold in $\hat{\Sigma}$ in this parameterization by taking $\rho_1=0$. 
Alternatively, the orbit parameter is $[\g]$ where $\g= (0,s_1)$.  
For these orbits, we have seen that  the scattering map is $F(s_0,v_0,[\g]) = (-s_0,-v_0,[\g])$.   
We want to perturb to study orbits with $\rho_1$ small compared to $s_1$.

If we normalize $\normtwo{s_1}=1$, then Proposition~\ref{prop_infinityflow} gives the unstable orbits in $\hat \Sigma$ by setting $\xi = -s_0, \eta = s_1, v_0 = -\sqrt{2h}$. 
Recall that the flow at infinity is really a reparametrization of the free particle flow with $q(t) = At+C$.  Thus $A=\dot q$, but $\dot q= vs +w$
in   the  variables
$(\rho, s, v, w)$ so we have $$A(s,v,w) = vs +w.$$
The following lemma gives  the first-order  change in  $A$ for perturbations of our unstable-manifold orbit in  $\Sigma$.
\begin{lemma}\label{lemma_scatteringformula}
 Consider any nonsingular orbit at infinity, given by Proposition~\ref{prop_infinityflow} with $s_0 = -\xi,  v_0=-\sqrt{2h}, \rho_1=0,s_1=\eta$ and let $(\delta\rho, \delta s,\delta v,\delta w)$ be any solution of the variational differential equation along this orbit.  Then the total change in $A(s,v,w) = vs+w$ along the variational orbit is  given by
 \begin{equation}\label{eq_scatteringformula}
 \Delta A(\xi,v_0,\eta) = \frac{\delta\rho(0)}{\sqrt{2h}}\int_{-\frac\pi{2}}^{\frac\pi{2}}\nabla U(\xi\sin\theta+\eta\cos\theta)\,d\theta.
 \end{equation}
\end{lemma}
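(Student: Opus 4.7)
The plan is a direct variational calculation. The key observation is that $A(s,v,w) = vs + w$ is conserved along every orbit in the infinity manifold $\Sigma$: a quick check using \eqref{eq_odeinfinity} gives
\[
\tfrac{d}{d\tau}(vs+w) = \|w\|^2 s + vw + (-vw - \|w\|^2 s) = 0,
\]
so on the reference orbit $\delta A$ is the only thing that moves, and its total change at $\pm\infty$ is well-defined. (Consistent with this, the value of $A$ on the reference orbit is $\sqrt{2h}\,\xi$ at both ends.)

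First I would write the variational equations of \eqref{eq_odex} along the reference orbit. Because $\rho\equiv 0$ on that orbit, every term containing an undifferentiated factor of $\rho$ drops out, leaving
\begin{align*}
\delta\rho' &= -v\,\delta\rho,\\
\delta s'   &= \delta w,\\
\delta v'   &= 2\metrictwo{w,\delta w} - U(s)\,\delta\rho,\\
\delta w'   &= \delta\rho\,\tilde\nabla U(s) - \delta v\, w - v\,\delta w - 2\metrictwo{w,\delta w}\,s - \|w\|^2\,\delta s.
\end{align*}
Next I would differentiate $\delta A = \delta v\, s + v\,\delta s + \delta w$ and substitute. The terms $2\metrictwo{w,\delta w}\,s$, $\delta v\, w$, $v\,\delta w$, and $\|w\|^2\,\delta s$ all cancel in pairs, and the two surviving terms recombine using $\tilde\nabla U(s) = \nabla U(s) + U(s)s$ to give the clean identity
\begin{equation}
\label{eq:dApropdrho}
\frac{d}{d\tau}\,\delta A(\tau) \;=\; \delta\rho(\tau)\,\nabla U(s(\tau)).
\end{equation}

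The linear equation $\delta\rho' = -v\,\delta\rho$ with $v(\tau) = \sqrt{2h}\tanh(\sqrt{2h}\tau)$ integrates explicitly to
\[
\delta\rho(\tau) = \delta\rho(0)\,\sech(\sqrt{2h}\,\tau).
\]
Substituting into \eqref{eq:dApropdrho} and integrating from $-\infty$ to $+\infty$ yields
\[
\Delta A \;=\; \delta\rho(0)\int_{-\infty}^{+\infty} \sech(\sqrt{2h}\,\tau)\,\nabla U(s(\tau))\,d\tau.
\]
Finally I would change variables to $\theta = \tan^{-1}(\sinh(\sqrt{2h}\,\tau))$; a one-line computation gives $d\theta = \sqrt{2h}\,\sech(\sqrt{2h}\,\tau)\,d\tau$, with $\theta$ sweeping $[-\pi/2,\pi/2]$. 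Proposition \ref{prop_infinityflow} identifies $s(\tau) = \xi\sin\theta + \eta\cos\theta$, and \eqref{eq_scatteringformula} drops out.

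There is really no serious obstacle. The only step that requires a little care is the cancellation in $\frac{d}{d\tau}\delta A$ producing \eqref{eq:dApropdrho}; it is easy to get lost in the bookkeeping, but once one remembers that $A$ is already conserved on $\Sigma$ (so all $\rho$-independent terms must cancel), the surviving $\delta\rho$-terms are exactly $-U(s)s\,\delta\rho + \tilde\nabla U(s)\,\delta\rho = \nabla U(s)\,\delta\rho$. Everything else is mechanical.
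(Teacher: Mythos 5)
Your proof is correct and is essentially the same as the paper's. The only cosmetic difference is that the paper first observes $A' = \rho\,\nabla U(s)$ for the full nonlinear system \eqref{eq_odex} and then linearizes in one line to get $(\delta A)' = \delta\rho\,\nabla U(s)$, whereas you reach the same identity by writing out the entire variational system and cancelling; likewise the paper passes to the $\theta$ variable before integrating the $\delta\rho$ equation rather than after, but the computation is identical.
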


 \begin{proof}
 From (\ref{eq_odex}) we find $A' = -\rho U(s)s +\rho \tilde\nabla U(s) = \rho\nabla U(s)$ and the first order variation along an orbit with $\rho=0$ is
 $$(\delta A)' = \nabla U(s(\tau))\delta\rho$$
 where $\delta\rho(\tau)$ satisfies $\delta\rho'= -v\delta\rho$.

 Now we will switch to use the variable $\theta$ from Proposition~\ref{prop_infinityflow} as the independent variable.  We have
 $v(\theta) = \sqrt{2h}\sin\theta, \theta' = \sqrt{2h}\cos\theta$
and so
 $$\frac{d(\delta\rho)}{d\theta} = -\tan\theta\,\delta\rho\qquad \delta\rho(\theta) = \delta\rho(0)\cos\theta.$$
 The variational equation for $\delta A$ becomes
 $$\frac{d(\delta A)}{d\theta} = \frac{\delta\rho(0)}{\sqrt{2h}}\nabla U(s(\theta))$$
 and (\ref{eq_scatteringformula}) follows by integration.
 \end{proof}

For later use, we note that the parameter $\delta\rho(0)$ for a solution of the variational equation is related to the parameter $\rho_1$ in the linearized flow (\ref{eq_stableunstablelinearizedflow})  near $(s_0,v_0)$ by
\begin{equation}\label{eq_rho1}
\rho_1 = \lim_{\tau\into-\infty}\exp(v_0\tau)\delta\rho(\theta(\tau))   =\lim_{\tau\into-\infty}\exp(v_0\tau)\delta\rho(0)\sech(v_0\tau)=2\delta\rho(0).
\end{equation}

The integral in (\ref{eq_scatteringformula}) is difficult to calculate, except for the following special case. 
 
\begin{example}
\label{eg_planarcurve}Consider the planar $n$-body problem and introduce the notation
$s^\perp$ for the configuration where each position vector in $s$ is rotated by $\pi/2$. Then taking $\eta=\xi^\perp$ in Proposition~\ref{prop_infinityflow} gives a path
$$s(\theta) = \xi\sin\theta +\xi^\perp\cos\theta\qquad -\frac\pi{2}\le \theta \le \frac\pi{2}$$
from $s_0=-\xi$ to $s_0'=\xi$ which consist of rigid rotations of $\xi$.  In fact, letting the rotation matrix $R(\theta)= \m{\sin\theta& -\cos\theta\\ \cos\theta&\sin\theta}$ act diagonally on configuration space, we have $s(\theta) = R(\theta)\xi$.  The rotational symmetry of the potential gives $U(R(\theta)\xi)=U(\xi)$ and
$$\nabla U(R(\theta)\xi) = R(\theta)\nabla U(\xi) =\nabla U(\xi)\sin\theta +\nabla U(\xi)^\perp\cos\theta.$$
Then (\ref{eq_scatteringformula}) gives
$$\Delta A(\xi,-\sqrt{2h},\xi^\perp) = \frac{2\delta\rho(0)}{\sqrt{2h}}\nabla U(\xi)^\perp.$$
The same proof works when $s_0$ is a planar configuration in $\R^d$, that is, a configuration such that all of the bodies lie in some fixed plane. Then $R(\theta)$ becomes a rotation of that plane which fixes the orthogonal complement.
\end{example}

This calculation gives some (rather minimal) information about the scattering map and relation.

\begin{proposition}\label{prop_imagecurve}
Let $p=(s_0,-\sqrt{2h})\in \hat\CE_-$ where $s_0$ is a planar configuration.  Then the image $\CHS(p)$ under the hyperbolic scattering relation contains an analytic curve of the form $q(\rho_1) = (s(\rho_1),\sqrt{2h})$, $0\le\rho_1<\delta$ for some $\delta>0$ with $s(0)=-s_0$ and tangent vector $s'(0) = \frac{1}{2h}\nabla U(-s_0)^\perp$.  The same holds for  $\CHS_R(p)$ and $\CHS_{R,K}(p)$ for $K>U(s_0)$.
\end{proposition}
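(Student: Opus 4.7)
The idea is to build the curve by perturbing the planar half-circle orbit at infinity of Example~\ref{eg_planarcurve} slightly off the infinity manifold (into $\rho>0$), and to read off the tangent $s'(0)$ from the variational formula of Lemma~\ref{lemma_scatteringformula}. The parameter $\rho_1$ of the perturbation will be the parameter of the curve.

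\emph{Step 1 (base orbit).} I specialize Proposition~\ref{prop_infinityflow} with $\xi=-s_0$ and $\eta=\xi^\perp=-s_0^\perp$. Because $s_0$ is planar, the half-circle $s(\theta)=\xi\sin\theta+\eta\cos\theta$ is a rigid rotation of $s_0$, so it avoids $\Delta$ and has constant potential $U(s(\theta))=U(s_0)$. This is a bi-hyperbolic orbit in $\hat{\Sigma}\subset\CH$ connecting $p$ to $-p=(-s_0,\sqrt{2h})$; in the unstable-manifold coordinates of Theorem~\ref{th_analyticlinearizationv1} it corresponds to $(\rho_1,s_1)=(0,\eta)$.

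\emph{Step 2 (perturbation family).} Using the analytic linearization Theorem~\ref{th_analyticlinearizationv1}, I extend this to the one-parameter family of orbits in $W^u_{loc}(p)$ with parameters $(\rho_1,s_1)=(\rho_1,\eta)$, $\rho_1\in[0,\delta)$. Since $\CH$ is open (Proposition~\ref{prop_CHnonempty}) and contains the $\rho_1=0$ orbit, for sufficiently small $\delta$ every orbit in the family stays in $\CH$. Analyticity of the flow and of $\pi_+\colon \CH_+\to\CEh_+$ makes $q(\rho_1):=\pi_+$(orbit) an analytic function of $\rho_1$, with $q(0)=-p$. Energy conservation (Theorem~\ref{thm:scatteringreln}(i)) forces $q(\rho_1)=(s(\rho_1),\sqrt{2h})$.

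\emph{Step 3 (tangent vector).} To compute $s'(0)$, I use the fact that $A'=vs+w$ evaluated at the $\omega$-limit equals $\sqrt{2h}\,s(\rho_1)$. Along the unperturbed half-circle this quantity is constant and equals $-\sqrt{2h}\,s_0$. Its first-order variation in $\rho_1$ is exactly the $\Delta A$ of Lemma~\ref{lemma_scatteringformula}, which in the planar case (Example~\ref{eg_planarcurve}) reduces to $\frac{2\delta\rho(0)}{\sqrt{2h}}\nabla U(-s_0)^\perp$. Combining with $\rho_1=2\delta\rho(0)$ from \eqref{eq_rho1} yields
\[
A'(\rho_1) \;=\; -\sqrt{2h}\,s_0 + \frac{\rho_1}{\sqrt{2h}}\,\nabla U(-s_0)^\perp + O(\rho_1^2),
\]
and dividing by $\sqrt{2h}=\|A'(\rho_1)\|$ gives the claimed $s'(0)=\frac{1}{2h}\nabla U(-s_0)^\perp$.

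\emph{Step 4 (the $R$ and $R,K$ versions).} The unperturbed orbit satisfies $\rho\equiv 0$ and $U(s)\equiv U(s_0)$, so for any fixed $R>0$ and any $K>U(s_0)$ I need only shrink $\delta$ further to ensure $\rho(\tau)<1/R$ and $U(s(\tau))<K$ for all $\tau$ along each perturbed orbit. The main technical obstacle will be controlling $\rho(\tau)$ uniformly on the full infinite $\tau$-interval: I plan to handle this by splitting the trajectory into (i) a compact middle piece, where continuous dependence on initial data gives uniform smallness for $\rho_1$ small, and (ii) two tails inside the linearization neighborhoods of $p$ and $-p$, where Theorem~\ref{th_analyticlinearizationv1} and \eqref{eq_stableunstablelinearizedflow} furnish explicit exponential bounds on $\rho$ in terms of $\rho_1$ (and the analogous $\tilde{\rho}_1$ at the forward end, itself $O(\rho_1)$).
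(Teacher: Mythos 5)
Your proof is correct and takes essentially the same approach as the paper: you perturb along the curve $(\rho_1,s_1)=(\rho_1,\eta)$, $\eta=\xi^\perp$, in $W^u_{loc}(p)$, invoke openness of $\CH$ and analyticity of $\pi_+$, and extract $s'(0)$ from Lemma~\ref{lemma_scatteringformula}, Example~\ref{eg_planarcurve}, and the identity $\rho_1=2\delta\rho(0)$ of \eqref{eq_rho1}. The one place you are more explicit than the paper is Step~4, where the paper simply asserts the $\rho<1/R$ and $U<K$ bounds ``provided $\delta$ is sufficiently small''; your compact-middle-piece-plus-exponential-tails argument is a reasonable way to justify that claim.
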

\begin{proof}  Since the planar $n$-body problem sits within the spatial and higher dimensional $n$-body problem
we can view the solution  curve $s(\theta)$ at infinity  of the example above  as occurring within any d-dimensional $n$-body problem, $d\ge 2$.  So,
as  above let $\xi=-s_0, \eta=\xi^\perp$.  Consider the curve in the local unstable manifold with parameters $(\rho_1,s_1)$, $\rho_1\in [0,\delta), s_1=\xi^\perp$ (see Figure~\ref{fig:conenbhdcurve}).   If $\delta>0$ is sufficiently small,  the curve will lie in $\CH$.  Moreover, since $\rho(s(\theta))=0$ and $U(s(\theta)) = U(s_0)$ for all $\theta$, the bi-hyperbolic orbits  starting at $(\rho_1,s_1)$ will satisfy $\rho(\tau)<1/R$ and $U(s(\tau))<K$ for all $\tau$, provided $\delta$ is sufficiently small.

Now consider the image curve $\pi_+(s_0,-\sqrt{2h},\rho_1,\xi^\perp)$.  It's an analytic curve in $\hat\CE_+$ with $\pi_+(s_0,-\sqrt{2h},0,\xi^\perp) = (-s_0,\sqrt{2h})$.  Along this curve we have 
$$A(\pi_+(s_0,-\sqrt{2h},\rho_1,\xi^\perp)) = \sqrt{2h}s(\rho_1)$$
 and, to first order in $\rho_1$,
$$\sqrt{2h}s'(0)\rho_1 =  \Delta A(\xi,-\sqrt{2h},\xi^\perp) = \frac{2\delta\rho(0)}{\sqrt{2h}}\nabla U(\xi)^\perp =  \frac{\rho_1}{\sqrt{2h}}\nabla U(\xi)^\perp$$
which gives the formula for $s'(0)$.
\end{proof}

\begin{comment}  The  factor of 2 from the previous comment propagates into the formula for $s'(0)$. 
\end{comment} 

With more  work we can expand this curve into an open set.
\begin{theorem}\label{thm:openimage}
Let $p=(s_0,-\sqrt{2h})\in \hat\CE_-$ where $s_0$ is a planar, but non-collinear, configuration. Then the image $\CHS(p)$ under the hyperbolic scattering relation contains an open subset of $\{(s,v)\in \hat\CE_+:v=\sqrt{2h}\}$.  The same holds for  $\CHS_R(p)$ and $\CHS_{R,K}(p)$ for $K>U(s_0)$.
\end{theorem}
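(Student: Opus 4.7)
The plan is to fatten the one-parameter family of orbits from Proposition~\ref{prop_imagecurve} into a full $(D-1)$-parameter family whose image under $\pi_+$ contains an open subset of $\hat{\CE}_+\cap\{v=\sqrt{2h}\}$. Following the notation of that proof, set $\xi = -s_0$ and consider the analytic map
\[
\Psi : [0,\delta)\times W \longrightarrow \hat{\CE}_+, \qquad \Psi(\rho_1, s_1) = \pi_+\bigl(\text{orbit in }W^u(p)\text{ with linearization parameters }(\rho_1, s_1)\bigr),
\]
where $W$ is a neighborhood of $\xi^\perp$ in the unit sphere of $T_{s_0}S$. Proposition~\ref{thm:atinfinity} gives $\Psi(0, s_1)\equiv -s_0$, and combining Lemma~\ref{lemma_scatteringformula} with the relation $\delta\rho(0)=\rho_1/2$ from \eqref{eq_rho1} yields the expansion
\[
\Psi(\rho_1, s_1) = -s_0 + \tfrac{\rho_1}{4h}\,I(s_1) + O(\rho_1^2), \qquad I(\eta) := \int_{-\pi/2}^{\pi/2}\nabla U\bigl(\xi\sin\theta+\eta\cos\theta\bigr)\,d\theta,
\]
where $I(\eta)\in T_{-s_0}\hat S$ automatically by linearized energy conservation applied along the infinity manifold.

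By the inverse function theorem, to show that the image of $\Psi$ contains an open set it is enough to prove that for some small $\rho_1^* > 0$ the differential of $s_1\mapsto \Psi(\rho_1^*, s_1)$ at $s_1^* = \xi^\perp$ has rank $D-1$; at leading order in $\rho_1^*$ this reduces to the purely algebraic statement that $I(\xi^\perp)$ together with the image of $DI(\xi^\perp)\colon T_{\xi^\perp}S^{D-2}\to T_{-s_0}\hat S$ spans the $(D-1)$-dimensional target. Let $\Pi\subset\R^d$ denote the 2-plane containing the bodies of $s_0$; this plane is uniquely determined precisely because $s_0$ is non-collinear. Split $\E = \E_\Pi\oplus\E_{\Pi^\perp}$ into in-plane and out-of-plane configurations. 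Since reflection across $\Pi$ is a symmetry of $U$ fixing every planar configuration, both $\nabla U$ and $D^2U$ along planar paths preserve the splitting, and so does $DI(\xi^\perp)$.

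On the out-of-plane piece, a direct computation at a planar non-collision configuration $q$ yields, for $\delta q\in\E_{\Pi^\perp}\setminus\{0\}$,
\[
\metrictwo{D^2U(q)\,\delta q,\delta q} = -\sum_{1\le i<j\le n}\frac{m_im_j}{|q_i-q_j|^3}\,|\delta q_i-\delta q_j|^2<0,
\]
since the off-diagonal Hessian blocks vanish for indices perpendicular to $\Pi$ and the center-of-mass constraint excludes the constant zero mode. Integration against $\cos\theta$ (nonnegative on $[-\pi/2,\pi/2]$, strictly positive inside) shows $DI(\xi^\perp)|_{\E_{\Pi^\perp}}$ is negative-definite and hence a bijection onto $\E_{\Pi^\perp}$, contributing the full $(d-2)(n-1)$ out-of-plane dimensions. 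On the in-plane side, the vector $I(\xi^\perp) = 2\nabla U(\xi)^\perp$ from Example~\ref{eg_planarcurve} has non-zero component along $s_0^\perp$ since $U(s_0)>0$, and $DI(\xi^\perp)$ restricted to the $(2n-4)$-dimensional in-plane tangent subspace $\E_\Pi\cap s_0^\perp\cap(\xi^\perp)^\perp$ supplies the remaining $2n-4$ independent directions in $\E_\Pi\cap s_0^\perp$. Summing gives rank $1+(2n-4)+(d-2)(n-1)=D-1$ at $\xi^\perp$. Once full rank is established, the inverse function theorem produces an open neighborhood of $\Psi(\rho_1^*,\xi^\perp)$ contained in $\CHS(p)$. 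Shrinking $\delta$ keeps all perturbed orbits inside $\{\rho<1/R,\,U(s)<K\}$ for $R$ large and $K>U(s_0)$, yielding the analogous openness for $\CHS_R(p)$ and $\CHS_{R,K}(p)$.

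The main technical obstacle is the in-plane non-degeneracy for $n\ge 3$: one must show that the $(2n-4)$-dimensional in-plane tangent perturbations of $\eta$ produce $2n-4$ linearly independent deformations modulo the radial direction picked out by $I(\xi^\perp)$. For $n=2$ the in-plane tangent subspace is zero-dimensional and the claim is automatic. For $n\ge 3$ it requires a careful analysis of the planar Hessian of $U$ along the half-circle $\gamma(\theta)=\xi\sin\theta+\xi^\perp\cos\theta$, where the non-collinearity hypothesis ensures that the integrated operator is non-degenerate on the shape-deformation subspace; I expect this to be the hardest step, likely handled either by an explicit calculation in coordinates adapted to $\Pi$ or by a transversality argument that is eventually pushed through in the stronger Theorem~\ref{thm:generic}.
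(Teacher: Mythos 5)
Your overall approach is the same one the paper takes: fatten the one-parameter curve of Proposition~\ref{prop_imagecurve} by perturbing $s_1$ as well as $\rho_1$, show the leading-order derivative of the endpoint map has rank $D-1$ at small $\rho_1>0$, and conclude by the inverse function theorem. Your splitting into in-plane and out-of-plane pieces via reflection across the plane $\Pi$, and your computation of the out-of-plane quadratic form as $-\sum_{i<j}\tfrac{m_im_j}{r_{ij}^3}\,|\delta q_i-\delta q_j|^2$ (negative definite on the zero-center-of-mass subspace), reproduce the paper's $d>2$ argument verbatim.

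However there is a genuine gap, and it is exactly where you flag uncertainty: the in-plane nondegeneracy. You need that the integrated planar Hessian $\bar D = \int_{-\pi/2}^{\pi/2} D\nabla U\bigl(R(\theta)\xi\bigr)\cos\theta\,d\theta$, restricted to planar vectors with zero center of mass that are orthogonal to $s_0$ and to $\eta=\xi^\perp$, is injective, and that its image together with $\bar D\eta = 2\nabla U(\xi)^\perp$ fills out $D-1$ dimensions. This is not a side computation to be deferred; it is the theorem's essential technical content, and it is what Lemma~\ref{lem: xi nondeg} supplies. That lemma first evaluates the blocks of $\bar D$ explicitly as $\bar D_{ij} = -\tfrac{2m_j}{r_{ij}^3}v_{ij}v_{ij}^T$ with $v_{ij}=u_{ij}^\perp$, and then characterizes the kernel of $\bar D$ on all of $\R^{2n}$ as exactly the three-dimensional span of the dilation vector $\xi$ and the two translation vectors $(1,0,1,0,\dots)$, $(0,1,0,1,\dots)$. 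The argument is a geometric one: a kernel vector $\beta$ must have each $\beta_{ij}$ either zero or parallel to $\xi_{ij}$, and the non-collinearity hypothesis lets one build nondegenerate triangles out of triples of bodies to force $\beta$ to be a global homothety of $\xi$ up to translation. Without proving this (or an equivalent), nothing rules out extra in-plane kernel directions collapsing the rank. Also note that your suggestion that the in-plane step might be ``pushed through in the stronger Theorem~\ref{thm:generic}'' has the dependency backwards: Theorem~\ref{thm:generic} is an analyticity argument that propagates nondegeneracy from the planar non-collinear case to generic configurations, so it takes the present theorem's kernel lemma as its base point rather than providing it.
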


\begin{proof}
The open subset will be a  neighborhood of the curve in Proposition~\ref{prop_imagecurve} with its initial endpoint  $\rho_1 = 0$ deleted.  In other words, around each point of the curve with $0<\rho_1$ sufficiently small, there is some neighborhood contained in $\CHS(p)$.   

Let $\xi=-s_0, \eta=\xi^\perp$.  Consider the manifold $Z$ of initial conditions in $W^u_{loc}(p)$ of the form
$0\le \rho_1<\delta$, $\normtwo{s_1-\eta}<\delta$ with $\metrictwo{s_0,s_1}=0, \normtwo{s_1}=1$ (see Figure~\ref{fig:conenbhdcurve}).  For $\delta>0$ sufficiently small, we have $Z\subset \CH$ and we can consider the image
 $\pi_+(Z)\subset \CHS(p)\subset \hat\CE_+$.  Points in this  image are  of the form $\pi_+ (\rho_1, s_1) = (s (\rho_1,s_1),\sqrt{2h})$
 for some analytic map $(\rho_1, s_1) \mapsto s(\rho_1, s_1)$ with $s(0,s_1)= -s_0$.    Define 
 $$g(\rho_1,s_1) = 4h s (\rho_1, s_1) = 2\sqrt{2h} A (\rho_1, s_1)$$ 
where $A = v_0 s = \sqrt{2h} s$ is the Chazy parameter $A$ as per Lemma \ref{lemma_scatteringformula}. 
Lemma~\ref{lemma_scatteringformula} gives the  Taylor expansion in $\rho_1$ for $g$ to be
 $$g(\rho_1,s_1) = \rho_1 I(\xi,v_0,s_1) +O(\rho_1^2)$$
 where 
 $$I(\xi,v_0,s_1)=\int_{-\frac\pi{2}}^{\frac\pi{2}}\nabla U(\xi \sin\theta+ s_1\cos\theta)\,d\theta.$$
 We will show that the derivative of $g$ is nondegenerate at points of the form $(\rho_1,\eta)$, $\rho_1>0$ sufficiently small.  Then the inverse function theorem shows that $g(Z)$ contains a neighborhood of $g(\rho_1,\eta)$. Consequently we get a neighborhood of our curve, as claimed.

\begin{figure}[ht]
\scalebox{0.6}{\includegraphics{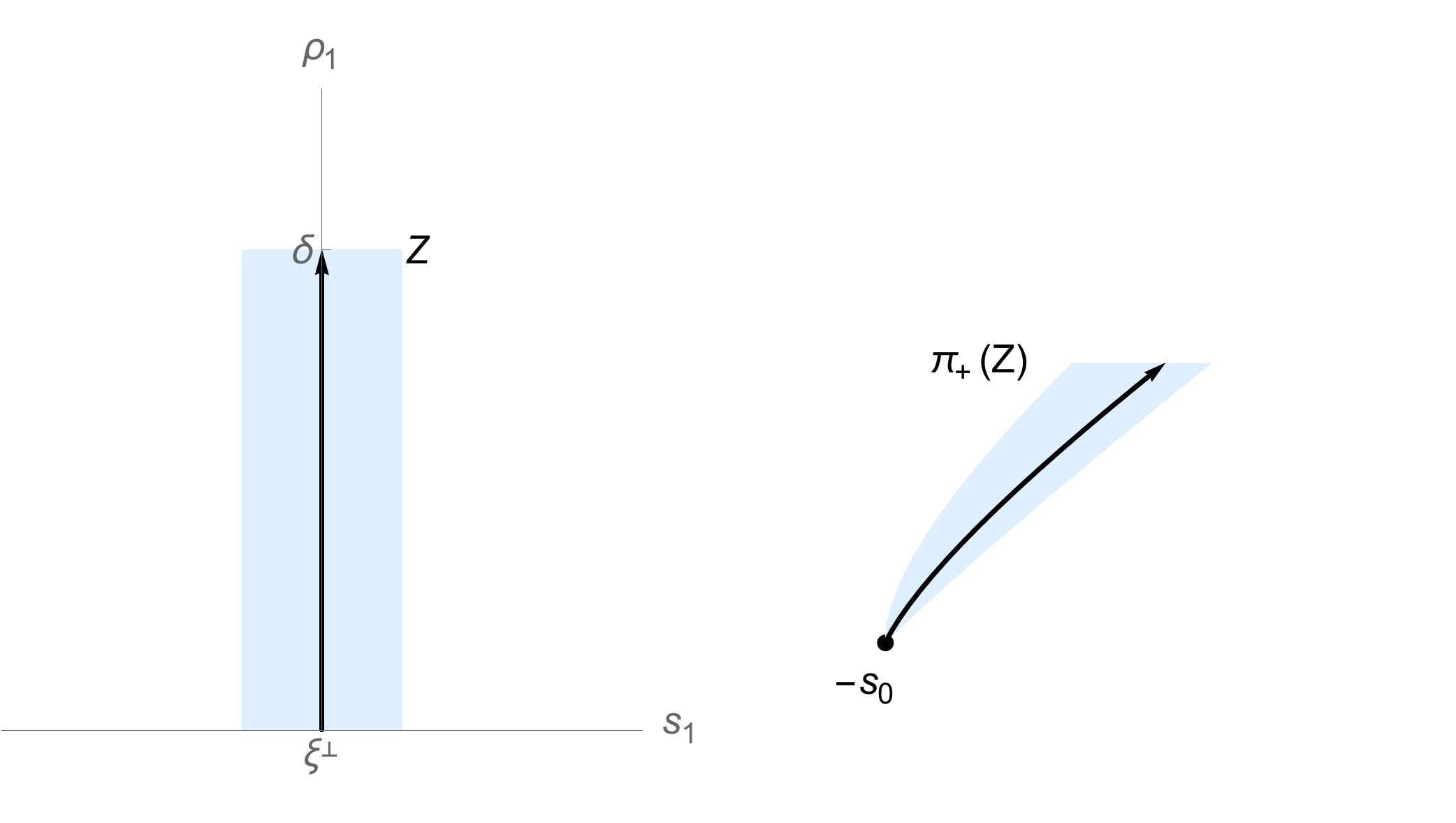}}
\caption{The domain $Z$ parametrizing the thickened curve and its image, the thickened curve.  $Z\subset W^u_{loc}(s_0,v_0)$ and the image is contained in $\CE_+$.}
\label{fig:conenbhdcurve} 
\end{figure}
 
 We have
 $$g_{\rho_1}(\rho_1,\eta) = I(\xi,v_0,\eta) +O(\rho_1) = 2\nabla U(\xi)^\perp+O(\rho_1)$$
 and 
 $$\begin{aligned}
 g_{s_1}(\rho_1,\eta) &= \rho_1\frac\partial{\partial s_1}I(\xi,v_0,s_1)|_{s_1=\eta} +O(\rho_1^2)\\
&= \rho_1 \int_{-\frac\pi{2}}^{\frac\pi{2}}D\nabla U(\xi\sin\theta+\eta\cos\theta)\,\cos\theta\,d\theta +O(\rho_1^2).
\end{aligned}
 $$
 
 {\bf Case $d=2$.}  We first give the proof for the case of the planar $n$-body problem and later extend to the case of a planar configuration in $\R^d$.
Then the  right hand side of this last  formula  is a $2n \times 2n$ matrix which we will now evaluate.  The rotational symmetry of the potential gives
$$D\nabla U(\xi\sin\theta+\eta\cos\theta) = D\nabla U(R(\theta)\xi) = R(\theta)D\nabla U(\xi)R(\theta)^{-1}$$
where  $R(\theta)$ acts on the $2\times 2$ blocks of $D\nabla U(\xi)$. 

It is straightforward to calculate the $2n\times 2n$ matrix $D\nabla U(\xi)$ with the result
\begin{equation}\label{eq_DnablaU}
D\nabla U(\xi) = \m{D_{11}&D_{12}&\ldots&D_{1n}\\
D_{21}&D_{12}&\ldots&D_{2n}\\
\vdots&&&\vdots}
\end{equation}
where the $2\times 2$ blocks are
$$D_{ij}  = \frac{m_j}{r_{ij}^3}\left(I - 3u_{ij}u_{ij}^T\right), \quad u_{ij} = \frac{\xi_i-\xi_j}{r_{ij}}\qquad \text{ for }i\ne j$$
and 
$$D_{ii} = -\sum_{j\ne i}D_{ij}$$
where $r_{ij}=|\xi_i-\xi_j|$.
The action of the rotation just conjugates each $2\times 2$ block $D_{ij}$ by $R(\theta)$.  One can check that the result of conjugating each block by $R(\theta)$, multiplying by $\cos\theta$ and then integrating over $[-\pi/2,\pi/2]$ is to replace the blocks $D_{ij}, D_{ii}$ by
$$\bar{D}_{ij} = \frac{-2m_j}{ r_{ij}^3}v_{ij}v_{ij}^T\qquad \bar{D}_{ii} = -\sum_{j\ne i}\bar{D}_{ij}$$
where $v_{ij} = u_{ij}^\perp$.
Writing  $\bar D$ for this integrated matrix, we have
 $$g_{s_1}(\rho_1,\eta) =\rho_1\bar D +O(\rho_1^2).$$

 If $\beta\in\R^{2n}$ then one can check that
$$\beta^T M \bar D \beta = 2\sum_{i<j}\frac{m_im_j}{r_{ij}^3}(\beta_{ij}\cdot v_{ij})^2.$$
Regarding the null space of this quadratic form, which is the same as the kernel of $\bar D$, we have the following lemma, whose proof will be postponed for the moment.

\begin{lemma}
\label{lem: xi nondeg} The kernel of $\bar{D}$ has dimension $3$ and can be spanned by three independent vectors
$\beta  = \xi$, $\beta = (1,0,1,0 \dots)$ and $\beta = (0,1,0,1,\ldots)$.
\end{lemma}

Now we  should  restrict  $g_{s_1}(\rho_1,\eta) $ and $\bar D$ to vectors $\beta$ with center of mass 0 and with $\metrictwo{\beta,s_0}=\metrictwo{\beta,\eta}=0$ since, in $Z$,  we are perturbing away from $s_1=\eta$ while keeping $\metrictwo{s_0,s_1}=0$ and $\normtwo{s_1}=1$.  Lemma \ref{lem: xi nondeg}  shows that none of these vectors are in  the kernel of $\bar D$.  On the other hand, the vector
$$g_{\rho_1}(\rho_1,\eta) =   2\nabla U(\xi)^\perp+O(\rho_1)$$
and this is equal to $\bar D \eta$.    Namely, the $i$-th pair of components of $\bar D \eta$ is
$$(\bar D \eta)_i = (\bar D\xi^\perp)_i= 2 \sum_{j\ne i}\frac{m_j}{r_{ij}^3}(\xi^\perp_i-\xi^\perp_j) = (2 \nabla U(\xi)^\perp)_i.$$
Lemma \ref{lem: xi nondeg} shows that $\eta$ is also not in the kernel of $\bar D$.  Therefore, 
taking the vector $\bar D \eta$ together with the image under $g_{s_1}(\rho_1,\eta) $ of all relavant vectors $\beta$, we find  that for $\rho_1$ sufficiently small, the image of the derivative map of $g$ at $(\rho_1,\eta)$ has dimension $2n-3 = D-1$.  Since this is the dimension of $Z$ and of $\{(s,v)\in \hat\CE_+:v=\sqrt{2h}\}$, the derivative map  is indeed nondegenerate for $\rho_1$ sufficiently small and the proof is complete.

 {\bf Case $d>2$.}    When $d > 2$ we think of the planar $ n $-body problem as it embeds
 in the higher-dimensional $ n $-body problem, writing $\R^d = \R^2 \oplus \R^{d-2}$ as
 an orthogonal decomposition.   We continue to perturb off the same analytic
 curve of hyperbolic solutions  as per proposition \ref{prop_imagecurve}, with the inducing hyperbolic
 scattering curve now  viewed as lying in $(\R^2)^n \subset (\R^d)^n$.  To show that the derivative along this curve is
 invertible we  split each component variation $\beta_i$ into $\beta_i = \beta_i ^{||} + \beta_i ^{\perp}$, $\beta_i ^{||} \in \R^2, 
\beta_i^{\perp} \in \R^{d-2}$.
 The formulae for $D_{ij}$ still holds. 
 Since  $u_{ij} \in \R^2$ so we have that each  $D_{ij}$ itself  splits into  block-diagonal form : $D_{ij} \beta_j = D_{ij}^{||} \beta_j^{||} + D_{ij}^{\perp} \beta_j ^{\perp}$
 with $D_{ij}^{||} : \R^2 \to \R^2$  being exactly the same as the 2 by 2 block
 $D_{ij}$ for the $d =2$ case, and with  $D_{ij}^{\perp} : \R^{d-2} \to \R^{d-2}$ equal to  $(m_j / r_{ij}^3 ) I_{d-2}$, a multiple of the identity independent of $\theta$. 
 The conjugation by $R(\theta)$ result still holds, with $R(\theta)$ acting trivially on the orthogonal ``$\perp$'' blocks.  Multiplying by $\cos\theta$ and integrating gives a block matrix $\bar D$ with a $2\times 2$ block ${\bar D}^{||}$ as above and a $(d-2)\times (d-2)$ block ${\bar D}^{\perp}$ which is just the original block $D^\perp$ multiplied by a factor of 2.
 
 The end result is that
 $$\beta^T M \bar D \beta =2\sum_{i<j}\frac{m_im_j}{r_{ij}^3}(\beta_{ij}^{||}\cdot v_{ij})^2 - 2 \sum_{i<j}\frac{m_im_j}{r_{ij}^3}|\beta_i ^{\perp} -\beta_j ^{\perp} |^2.$$
Recall that Lemma \ref{lem: xi nondeg} implies, the first term, $\bar D^{||}$ has corank 3. The second term has a $d-2$-dimensional kernel consisting of vectors with all $\beta_i ^{\perp}$ equal. It is nondegenerate on the space of vectors $\beta^\perp$ with center of mass zero.
Then  for $\rho_1$ sufficiently small, the image of the derivative map of $g$ at $(\rho_1,\eta)$ has dimension
$dn-3 - (d-2) = d(n-1)-1 =D-1$ and the implicit function theorem applies as before to complete the proof.
 \end{proof}

\begin{proof}[Proof of Lemma \ref{lem: xi nondeg}]

Let $\beta=(\beta_i)_{i=1}^n$ be a vector from the kernel of $\bar D$. Then $\langle v_{ij}, \beta_{ij} \rangle=0$, for all pairs of $i, j$ with $i \ne j$, where $\beta_{ij}= \beta_i - \beta_j$. Recall that $v_{ij} = u_{ij}^{\perp}$ and $u_{ij}= \frac{\xi_i - \xi_j}{r_{ij}}$. This means either $\beta_{ij}=0$ or $\beta_{ij} \ne 0$ and is parallel to $\xi_{ij}=\xi_i -\xi_j$. Notice that $\xi_{ij} \ne 0$, for any $i \ne j$, as $\xi$ is a non-singular configuration.

We claim if there exists a pair of $i \ne j$, such that $\beta_{ij}=0$, then $\beta_{k}=\beta_i$, for all $1 \le k \le n$.  In  this case, $\beta$ is in the span of $(1,0,\dots), (0,1, \ldots)$.
To prove the claim, choose a $k \ne i$ or $j$. First let's assume $\xi_k \notin \overline{\xi_{ij}}$, where $\overline{\xi_{ij}}$ represents the straight line pass through $\xi_i$ and $\xi_j$. Since $\xi$ is non-singular, $\xi_i \ne \xi_j$, and this line is unique.  The non-singularity of $\xi$ further implies $\xi_{ik} \ne \xi_{jk}$ and neither is zero.  Meanwhile $\beta_{ij}=0$ implies $\beta_{ik} = \beta_{jk}$. This means $\beta_{ik}$ must be zero, as otherwise, $\beta_{ik}$ is parallel to both $\xi_{ik}$ and $\xi_{jk}$, which is absurd.

Now assume $\xi_k \in \overline{\xi_{ij}}$. Since $\xi$ is non-collinear, there must exist another index $\ell$, such such $\xi_{\ell} \notin \overline{\xi_{ij}}$. By the previous result, $\beta_{i\ell}=0$. Meanwhile $\xi_k \notin \overline{\xi_{i\ell}}$ and by repeating the previous argument, we can show $\beta_{ik}=0$. This finished our proof of the claim. 

With the above claim, we may assume $\beta_{ij} \ne 0$, for all pairs of $i \ne j$, as otherwise all $\beta_{ij}$ must be zero. Since $\xi$ is non-collinear, there exist at least three different indices $i, j$ and $k$, such that $\xi_i, \xi_j$ and $\xi_k$ are the vertices of a non-degenerate triangle. Meanwhile by the assumption, $\beta_i, \beta_j$ and $\beta_k$ are the vertices of a non-degenerate triangle as well. Since $\beta$ is in the kernel of $\bar D$, the corresponding sides of the two triangles are parallel to each other. Therefore the two triangles must be similar to each other and there is a constant $\lmd >0$, such that 
\begin{equation}
\label{eq_triangle lmd}  \beta_{ij} = \lmd \xi_{ij}, \;\; \beta_{jk} = \lmd \xi_{jk}, \;\; \beta_{ki} = \lmd \xi_{ki}.
\end{equation}
Hence after translating $\beta$ by a constant vector, we have 
\begin{equation}
\label{eq_homoth lmd} 
\beta_i = \lmd \xi_i, \;\; \beta_j = \lmd \xi_j, \;\; \beta_k = \lmd \xi_k.
\end{equation} 

Now for any index $\ell$ different from $i, j, k$, we can always find two vectors from $\xi_i, \xi_j$ and $\xi_k$, such that together with $\xi_{\ell}$, they form the vertices of a non-degenerate triangle. Without loss of generality, we assume $\xi_i$ and $\xi_j$ are such two vectors. Then by a similar argument as above we can show the two non-degenerate triangles form by $\xi_i, \xi_j, \xi_{\ell}$ and $\beta_i, \beta_j, \beta_{\ell}$ correspondingly are similarly to each other, and there is a constant $\mu >0$, such that 
$$ \beta_{ij} = \mu \xi_{ij}, \;\; \beta_{j \ell} = \mu \xi_{j \ell}, \;\; \beta_{\ell i} = \mu \xi_{\ell i}. $$ 
Combining the first identity above with the first identity in \eqref{eq_triangle lmd}, we get $\mu = \lmd$. Then the last identity above implies 
$$ \beta_{\ell}- \beta_i = \lmd \xi_{\ell} - \lmd \xi_{i}. $$ 
Combining this with the first identity in \eqref{eq_homoth lmd}, we get $\beta_{\ell} = \lmd \beta_{\ell}$. As a result we have proven that up to a translation $\beta = \lmd \xi$.  This means that $\beta$ is in the span of the three vectors given in the lemma.
\end{proof}

{\sc Finishing off the proof of Theorem \ref{thm:scatteringreln}.} To finish off the proof of the openness property (vii) of
Theorem \ref{thm:scatteringreln} we must  extend the  results just achieved  for planar configurations to generic non-planar ones when $d>2$.
We will use  an analyticity argument.
\begin{theorem}\label{thm:generic}
There are real-valued continuous functions $f, K$  defined on an open, dense subset of $S$ with $f$ real analytic and non-constant, such that if $p=(s_0,-v_0) \in \hat{\CE}_-$ and $f(s_0)\ne 0$,  then the subsets $\CHS(p)$, $\CHS_R(p)$ and $\CHS_{R,K}(p)$ for $K>K(s_0)$  all satisfy the openness condition,
condition  (vii) of theorem \ref{thm:scatteringreln}: each set contains a non-empty open
subset of the sphere $\hat{\CE}_+ \cap \{v = + v_0\} $ whose closure contains $-p$.
\end{theorem}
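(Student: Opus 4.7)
The plan is to deduce Theorem \ref{thm:generic} from Theorem \ref{thm:openimage} by packaging its rank criterion into an explicit real-analytic function $f$ on a dense open subset of $\hat S$, and then invoking the standard fact that a real-analytic function which is non-zero at a single point is non-zero on an open dense subset.

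First I would fix once and for all a $2$-plane $P \subset \R^d$ and let $J_P \in O(d)$ denote the rotation by $\pi/2$ within $P$, extended by the identity on $P^\perp$. For $s_0 \in \hat S$ set $\tilde\eta(s_0) := (J_P s_{0,i})_{i=1}^n \in \E$. The pointwise identity $\langle J_P s_{0,i}, s_{0,i} \rangle_{\R^d} = 0$ gives $\metrictwo{s_0, \tilde\eta(s_0)} = 0$. Let $U := \{s_0 \in \hat S : \tilde\eta(s_0) \ne 0\}$, an open dense subset of $\hat S$ whose complement is cut out by the linear condition that every $s_{0,i}$ lie in $P^\perp$. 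On $U$ define $\eta(s_0) := \tilde\eta(s_0)/\normtwo{\tilde\eta(s_0)}$, which is real-analytic. The crucial property is that when $s_0 \in P^n \cap \hat S$, one has $\normtwo{\tilde\eta(s_0)} = 1$ and $\eta(s_0)$ reduces to the $s_0^\perp$ used in the proof of Theorem \ref{thm:openimage}.

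Next I would run the perturbation analysis of Theorem \ref{thm:openimage} with $\xi = -s_0$ and this canonical $\eta = \eta(s_0)$, producing an analytic map $g(\rho_1, s_1)$ into $\CHS(p)$ whose leading-order derivative along the admissible $(D-1)$-dimensional subspace of $s_1$-variations is
\[
Dg(\rho_1,\eta(s_0)) = \bigl(\bar D(s_0)\eta(s_0) + O(\rho_1),\; \rho_1\bar D(s_0) + O(\rho_1^2)\bigr),
\]
where $\bar D(s_0)$ is the real-analytic matrix from the proof of Theorem \ref{thm:openimage} with $\eta(s_0)$ in place of $\xi^\perp$. I would let $f(s_0)$ be the determinant of a fixed $(D-1) \times (D-1)$ minor of this derivative matrix (the leading coefficient in $\rho_1$), with the choice of minor made via a local analytic framing of the tangent bundle of $\hat S$ pulled back from a framing associated to $P$. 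Set $K(s_0) := U(s_0) + 1$. Both $f$ and $K$ are continuous on $U$, and $f$ is real-analytic.

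For $s_0 \in P^n \cap \hat S$ which is non-collinear in $P$, the Case $d > 2$ argument inside the proof of Theorem \ref{thm:openimage} shows that the chosen minor is non-zero, so $f(s_0) \ne 0$; meanwhile $f$ vanishes on collinear planar configurations, since by Lemma \ref{lem: xi nondeg} the kernel of $\bar D$ grows there, so $f$ is non-constant. For any $s_0 \in U$ with $f(s_0) \ne 0$, the rank of $Dg(\rho_1, \eta(s_0))$ equals $D-1$ for all sufficiently small $\rho_1 > 0$, and the inverse function theorem, applied verbatim as at the end of the proof of Theorem \ref{thm:openimage}, produces the required open subset of $\hat{\CE}_+ \cap \{v = +v_0\}$ inside $\CHS(p)$, $\CHS_R(p)$, and $\CHS_{R, K(s_0)}(p)$, whose closure contains $-p$ upon letting $\rho_1 \to 0^+$. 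The main obstacle is the globalization step: assembling a single real-analytic $f$ on $U$ from determinant computations that a priori depend on tangent-space framings varying with $s_0$. The fixed plane $P$ does this work by supplying both a canonical perturbation direction $\eta(s_0)$ and a canonical analytic framing of the relevant tangent spaces, after which everything reduces to real-analytic propagation of the non-vanishing established by Theorem \ref{thm:openimage} at planar non-collinear $s_0$.
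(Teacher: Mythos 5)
Your proposal follows essentially the same strategy as the paper's proof---fix a $2$-plane $P$, build an analytic $\eta(s_0)$ by projecting to $P$ and rotating, define $\bar D(s_0)$ by the same integral, package a rank condition into a determinant $f$, and propagate non-vanishing from planar non-collinear configurations by real analyticity. However, there are several genuine gaps.

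First, a concrete error in the definition of $\eta$: if $J_P$ is rotation by $\pi/2$ in $P$ \emph{extended by the identity} on $P^\perp$, then writing $s_{0,i} = a_i + b_i$ with $a_i \in P$, $b_i \in P^\perp$ gives
$\langle J_P s_{0,i}, s_{0,i}\rangle = \langle J_P a_i, a_i\rangle + |b_i|^2 = |b_i|^2$,
which is generally nonzero, so $\metrictwo{s_0, \tilde\eta(s_0)} = 0$ fails. You need $J_P$ to be the rotation on $P$ extended by \emph{zero} (equivalently: project to $P$, then rotate), which is what the paper does; then the claimed orthogonality holds.

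Second, your open set $U$ is cut out only by the linear condition $\tilde\eta(s_0)\ne 0$, but this is not enough for $\bar D(s_0)$ to be defined: the integral defining $\bar D$ has $D\nabla U$ evaluated along the great circle $-s_0\sin\theta + \eta(s_0)\cos\theta$, which may pass through collisions, in which case the integrand diverges. The paper therefore restricts to $s_0$ for which the entire great circle stays off the collision locus, and it shows this extra condition is also cut out by the non-vanishing of an analytic function. Without this restriction, $f$ as you define it is not defined on the set you claim. Relatedly, your choice $K(s_0) = U(s_0) + 1$ is incorrect for non-planar $s_0$, since $U$ is no longer constant along the great circle; the correct threshold is $\max_\theta U(s(\theta,s_0))$.

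Third, the final density step needs more than ``a real-analytic function non-zero at a point is non-zero on an open dense set.'' That principle requires the domain to be connected, or at least that $f$ is not identically zero on any component. Once you restrict to the smaller set where $\bar D$ is defined (projection condition and collision-avoidance of the great circle), connectedness is not obvious. The paper supplies a homotopy argument for the projection condition and a codimension-$\ge 2$ argument for avoiding collision great circles, and then exploits connectedness by restricting $f$ to analytic curves joining a given $s_0$ to a planar non-collinear one. Your proposal omits this, so the claimed density of $\{f \ne 0\}$ is not established.
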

\begin{proof}
 When  $s_0$ is planar and non-collinear, the  proofs we  just gave  above  that the sets $\CHS(p), \ldots $ satisfy condition (vii) boiled
down to showing  a certain linear operator had full rank, this rank being $D-1$.  This operator was the restriction of 
   $\bar D$  to the $D-1$-dimensional linear space  $V(s_0)$ of vectors with zero center of mass and  orthogonal to $s_0$.  
Once bases are chosen  in the domain and range of the operator,  this rank condition  could have been verified
 by showing a    determinant is nonzero.   When  $s_0$ is
not planar, the linear space $V(s_0)$ still  has dimension $D-1$,   the same as the dimension of the target space
 $\{(s,v)\in \hat\CE_+:v=\sqrt{2h}\} \cong S$.  Nondegeneracy  of a  corresponding operator  will  allow us to apply the inverse function theorem and
conclude that the  sets again satisfy condition (vii).

$\bar D$ was defined as the integral  
$$\bar D(s_0) = \int_{-\frac\pi{2}}^{\frac\pi{2}}D\nabla U(\xi\sin\theta+\eta\cos\theta)\,\cos\theta\,d\theta $$
where $\xi = - s_0$ and $\eta = \xi^\perp$ (obtained by rotating all of the $\xi_i$ by 90 degrees in the plane).  This  construction of
an operator can be extended to almost all nonplanar $s_0$ by a choosing an $\eta = \eta(s_0)$  in the following (rather artificial) way.

Let $\xi = -s_0$ where $s_0$ is not necessarily planar.  Project all of the $\xi_i$ into $\R^2 \times 0$ and let $\eta$ be obtained by rotating all of these projections in that plane.  As long as not all of the projections are $(0,0)$ this gives a nonzero vector $\eta$ which is orthogonal to $\xi$ and  which can be normalized  so  $|| \eta || = 1$ as in  Proposition~\ref{prop_infinityflow}.  Call this vector $\eta(s_0)$.  This $\eta(s_0)$ is well defined and analytic on an open and dense subset of $\CE_-$.  We define  $\bar D(s_0)$ to be the integral above, an $nd \times nd$ matrix depending analytically on $s_0$, 
and defined for all $s_0$ in the open, dense set $\CU$ of $s_0$'s satisfying the projection condition above and the additional condition that   that the great circle  path $s(\theta,s_0)= -s_0\sin \theta + \eta(s_0) \cos \theta$ misses the collision locus.  The operator continues to represent
the linearized flow along the corresponding great circle, `pushed' out infinitesimally into the `bulk' $\rho> 0$ so the arguments relating
the nondegeneracy of $\bar D(s_0)$ to condition (vii) continue to hold. 
To see that $\CU$  can  be specified by the nonvanishing of an analytic function $g$, observe that the projection condition, being a linear independence condition, can be written as the non-vanishing
of   products of minors built from the analytic vector functions   $-s_0$ and $\eta(s_0)$,
while  the non-collision condition can be written as the nonvanishing of the pullback of the product of the algebraic functions  $r_{ij} ^2$'s by
the geodesic flow on the sphere, which is analytic.   

Regarding the function $K$, note that  although our paths  $s(\theta,s_0)$ avoid the poles of $U$,  the potential $U(s(\theta,s_0)))$ is no longer constant along them.  For this reason, the constant $U(s_0)$ in the planar theorem should be replaced by $K(s_0) = \max_\theta U(s(\theta,s_0)))$.

Now let $\CS(s_0)$ be the $D-1$  dimensional subspace of vectors with zero center of mass and orthogonal to $s_0$.
If we can choose a basis for $\CS(s_0)$ in an analytic way, then the nondegeneracy becomes the requirement that some $(D-1) \times (D-1)$ determinant  be nonzero.  Multiply this by  determinant by $g$  we get the required $f$.
There is no problem choosing a basis for the zero center of mass subspace (for example the basis implicit in Jacobi coordinates).  Choosing a basis for the spaces orthogonal to $s_0$ is not possible globally on $S$ (unless $D-1 =3$ or $7$)  but since we have deleted the singular points, the resulting  normal bundle is trivial and such a basis can be found.  We now have our function $f$. 

We now have condition (vii) holding for the set $\CV$  all $s_0$'s such that   $f(s_0)\ne 0$.  $\CV$ is nonempty
because it contains the planar nonconlinear $s_0$'s described earlier. $\CV$ is clearly open. 
To see that $\CV$ is dense, we will use the fact that the open, dense set where $g(s_0)\ne 0$ is   connected.  To see its connectedness, 
note that the projection condition gives a connected set since we can use a homotopy to carry out the projection (without changing $\eta$).  Now the set of initial conditions $\xi,\eta$ leading to collision  paths $\xi \sin \theta + \eta \cos \theta$ has codimension at least two, so it can be avoided.
Thus any configuration $s_0$  with $g(s_0)\ne 0$ can be connected to a planar, noncollinear one by an analytic curve, say $\gamma(s)$, with $G(\gamma(s))\ne 0$.  Then $f(\gamma(s))$ is an analytic function of one variable which is not identically zero.  Therefore its zeros are isolated.
\end{proof}

{\sc What's Left to do? } Most   questions  regarding  hyperbolic scattering   remain open.  The broadest  is simply
\begin{problem}
From a given initial state $p\in \CEh_-$ determine which final states in $\CEh_+$ can be reached by hyperbolic scattering.  In other words, determine the image $\CHS(p)$ under the hyperbolic scattering relation.
\end{problem}
For the two-body problem in the plane we saw in example \ref{ex_Kepler}  that every state can in $\CE_+$ having the same energy
as $p$ can be reached, except for $-p$.  If the infinity manifold is added, then its flow takes $p$ to $-p$. Written
in terms of Chazy variables, $\CHS( A) = S^1$, the full circle of asymptotic states of radius $\|A \|$.   
Perhaps an analogous result holds for the $ n $-body problem, i.e. it could be that $\CHS(A) = \Sh$ for all $A \in \Sh$.

 Since so little is known about the problem with $n\ge 3$, even the following seems to be mostly open.
\begin{problem}
From a given initial state $p\in \CEh_-$ give examples of states besides $-p$  which can be reached or of states which cannot be reached by hyperbolic scattering.
\end{problem}

One can pose the same problems with some restrictions on the scattering relation, for example, with $\CHS(p)$ replaced by $\CHS_{R}(p)$ or $\CHS_{R,K}(p)$.   Theorem~\ref{th_scatteringnearinfinity} shows that certain states, namely those outside $\CV$, cannot be reached using $\CHS_{R,K}(p)$, while Proposition~\ref{prop_imagecurve} and Theorems \ref{thm:openimage}  and \ref{thm:generic} give some information in the other direction.  But little else seems to be known.

It seems likely that near collisions will lead to very different kinds of scattering, even for orbits near infinity. In particular, the scattering relation $\CHS_R$ does not impose a bound on the potential and allows near collisions.
\begin{problem}
From a given initial state $p\in \CEh_-$, determine the image $\CHS_R(p)$.  Calculate $\cap_{R>0}\CHS_R(p)$.  Is it equal to $\{-p\}$ ?
\end{problem}

Ideas from \cite{Maderna}  might help in  making progress on these problems. 

\color{black}

\appendix

\section{Analytic Linearizations}\label{sec_ProofOfThm}

Here we prove the analytic linearization theorem, Theorem \ref{th_analyticlinearizationv1} as a  
corollary to:   

\begin{theorem}\label{th_analyticlinearization}
	Let $ X$ be a real analytic vector field defined   on an open subset  of 
	 $\V = \R^m \times \R^k$ and vanishing on $\CE = U \times 0$ 	 where  $U$ is an open subset of  $\R^m$.  
	 Suppose    that    the   linearization
	 $L(p) = DX_p: \V \to \V$ of $X$ at  each $p \in \CE$    has exactly one   nonzero eigenvalue $\lambda(p)$ (necessarily real since $X$ is real)  of   algebraic   	  multiplicity $k$.  Set $N_p = L(p) (\V) \subset \V$ and write  $N \to \CE$ for the rank k analytic vector bundle over $\CE$ whose fiber
	 over $p$ is  $N_p$.  Let $X_N:  N \to N$ be the fiber-linear vector field given by  $X_N (p, v) = (0, L(p) v)$. 
	  Then there is an analytic diffeomorphism $\Phi$ such that $\Phi^* X = X_N$
	  where $\Phi$ maps a   \nbhd of the zero section in $N$ to  a neighborhood of $\CE$ in $\V$.  
	  Finally, if we define  $i: N \to  \V$  by $i(p,v) = p+ v$,  then,  upon restriction to a   \nbhd of the zero section,  $i$ is an   analytic  diffeomorphism onto a \nbhd of $\CE$   and $\Phi$ agrees with $i$ to 1st order along the zero section.
	 
	 \end{theorem}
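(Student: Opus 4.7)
The plan is to construct $\Phi$ in two stages: first an analytic change of coordinates on $\V$ that straightens the bundle $N$ to the vertical subbundle $\{0\} \oplus \R^k$, then a linearizing diffeomorphism $\Phi_1$ that is the identity to first order along $\CE$.

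For the straightening, note that $T_p \CE \subseteq \ker L(p)$ because $X$ vanishes on $\CE$, and since $L(p)$ has only the eigenvalues $0$ (with algebraic multiplicity $m$) and $\lambda(p) \ne 0$ (with algebraic multiplicity $k$), in fact $T_p \CE = \ker L(p)$ and $N_p$ is the generalized $\lambda(p)$-eigenspace. Consequently $N_p$ is complementary to $T_p \CE$, and since the projection $\V \to \R^k$ restricts to an isomorphism on each $N_p$, there is an analytic map $A : U \to \R^{m \times k}$ with $N_{(x, 0)} = \{(A(x) v, v) : v \in \R^k\}$. The map $\Psi_0 : \V \to \V$ defined by $\Psi_0(x, y) := (x + A(x) y, y)$ is then an analytic diffeomorphism near $\CE$ whose differential along $\CE$ sends the vertical subspace $\{0\} \oplus \R^k$ onto $N_{(x, 0)}$. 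In the coordinates pulled back by $\Psi_0$, the vector field takes the triangular form
\[
X(x, y) = \bigl(P(x, y),\; B(x) y + Q(x, y)\bigr), \qquad P, Q = O(\|y\|^2),
\]
with $B(x)$ an analytic family of $k \times k$ matrices whose sole eigenvalue is $\lambda(x)$, while $X_N$ becomes $(x, y) \mapsto (0, B(x) y)$ under the natural identification of $N$ with the vertical subbundle.

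Next, I would seek an analytic $\Phi_1(x, y) = (x + u(x, y), y + w(x, y))$ with $u, w = O(\|y\|^2)$ satisfying the infinitesimal conjugacy equation $D\Phi_1 \cdot X_N = X \circ \Phi_1$. Expanding $u$ and $w$ in power series in $y$ with analytic coefficients in $x$ and matching coefficients of each monomial $y^\alpha$ of degree $N = |\alpha| \geq 2$, one obtains a cascade of Sylvester-type cohomological equations whose homological operator acts on degree-$N$ polynomials with spectrum concentrated at $N \lambda(x)$ (for the $u_\alpha$ equation) and $(N - 1)\lambda(x)$ (for the $w_\alpha$ equation), both bounded away from zero uniformly on compact subsets of $U$ for every $N \geq 2$. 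Hence the coefficients are analytically determined order by order. Convergence of the resulting formal series on a complex polydisc neighborhood of $\CE$ then follows from a parameter-dependent majorant estimate in the Poincaré style: the $O(1/(N|\lambda(x)|))$ bounds on the inverse cohomological operators, combined with the $O(\|y\|^2)$ smallness of $P, Q$, dominate the combinatorial growth of the nonlinear right-hand sides built from the lower-order $u_\beta, w_\beta$, yielding geometric decay of the Taylor coefficients.

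Setting $\Phi := \Psi_0 \circ \Phi_1$ (with $\Phi_1$ precomposed by the identification of $N$ with the vertical subbundle) produces the required analytic diffeomorphism from a neighborhood of the zero section of $N$ onto a neighborhood of $\CE$ in $\V$, conjugating $X$ and $X_N$. Since $\Phi_1$ equals the identity plus $O(\|y\|^2)$ and $\Psi_0$ sends a vertical vector $(x, v)$ to $(x + A(x) v, v) = p + v$ with $p = (x, 0) \in \CE$, the composition $\Phi$ agrees with the inclusion $i(p, v) = p + v$ to first order along the zero section. The chief technical obstacle is the convergence step: establishing parameter-uniform majorants that simultaneously control the $N$-dependent inverse cohomological operators at all orders and the compounding of nonlinear contributions from lower orders, a parametric version of the classical convergence argument in the Poincaré domain.
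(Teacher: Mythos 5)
Your proposal takes a genuinely different route from the paper.  The paper's proof proceeds in three steps: (1) straighten the \emph{stable manifolds} $W^s_{loc}(p)$ so they coincide with the fibers of $N$ --- which requires an analytic-in-parameters Perron/stable-manifold argument (with a complexification to get analyticity) and has the effect of making the pushed-forward vector field \emph{tangent to the fibers}; (2) invoke Brushlinskaya's parametric Poincar\'e--Dulac normal form theorem to linearize the resulting fiberwise vector field on $\R^k$, noting that the resonance condition $|Q|\lambda = \lambda$ is impossible for $|Q|\ge 2$; (3) use the absence of resonances to show the local linearizations are unique, hence patch together over all of $\CE_+$.  You instead do a purely algebraic straightening $\Psi_0$ that aligns the vertical subbundle with $N_p$ (which does \emph{not} make the vector field vertical --- your $P(x,y)$ term survives as $O(\|y\|^2)$), and then solve the full $(m+k)$-dimensional conjugacy equation order-by-order directly.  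This is cleaner conceptually, avoids the stable manifold theorem and the patching step altogether, and the triangular structure of the homological equations (the degree-$N$ equations for $u_N, w_N$ depend only on lower-order data) is correctly identified, as is the spectrum $N\lambda(x)$ and $(N-1)\lambda(x)$ of the cohomological operators.

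The gap is the convergence step, which you flag yourself, and it is more delicate than the $O(1/(N|\lambda(x)|))$ bound you cite suggests.  The paper deliberately emphasizes that $L(p)$ restricted to $N_p$ may have nontrivial Jordan blocks, i.e. $B(x) = \lambda(x)I + N_0(x)$ with $N_0$ nilpotent.  The cohomological operator at degree $N$ then has semisimple part $(N-1)\lambda(x) I$ plus a nilpotent correction built from $N_0$ and the derivation $h \mapsto (\partial_y h)N_0 y$; the nilpotency order of this correction \emph{grows with $N$}, and its operator norm scales like $N\|N_0\|$.  The resulting inverse norm is bounded by a geometric sum in $N\|N_0\|/((N-1)|\lambda|)$, which is only $O(1/(N|\lambda|))$ when $\|N_0\| < |\lambda|$; otherwise it blows up.  One can repair this by a preliminary linear rescaling of the $y$-coordinates (conjugation by $\mathrm{diag}(1,\varepsilon,\varepsilon^2,\ldots)$ makes $\|N_0\|$ small) --- but done uniformly and analytically in $x$ over compact subsets of $U$, this is exactly the kind of careful parameter-uniform estimate that Brushlinskaya's theorem packages.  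In effect your proposal, pushed to completion, is a self-contained reproof of a parametric non-resonant Poincar\'e linearization theorem in the presence of Jordan blocks; the paper avoids that burden by first using stable manifold theory to reduce to a fiberwise ODE on $\R^k$ and then citing Brushlinskaya as a black box.
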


\begin{remark} The kernel of $L(p)$ equals $\R^m \times 0$. This fact follows   from the assumptions that   $U$ is open
and that the  multiplicity of $\lambda(p)$ is $k$.  
$N_p$   is a complementary subspace  to  $\R^m \times 0$   but need not be  equal to $0 \times \R^k$.
$L(p)$ restricted to $N_p$ is a rank k linear map. It is essential for our application, namely theorem  \ref{th_analyticlinearizationv1}, that 
this map be  allowed to have nonzero Jordan blocks.

\end{remark} 

Let us see   see how theorem \ref{th_analyticlinearizationv1} follows from theorem \ref{th_analyticlinearization}.  

\begin{proof} [Proof of theorem \ref{th_analyticlinearizationv1}]. 
Let $X^{newt}$ be   our N-body vector field, extended to infinity, as per \eqref{eq_odex}.
We saw in the discussion of the linearization structure,  immediately  following \eqref{eq:tgtSpace} and  also   remark \ref{rmk:Alert},  that $X^{newt}$ 
is the restriction to a codimension 2 analytic subvariety of an analytic vector field $X$
 defined on an open subset of $\R \times \E \times \R \times \E = \V$ and whose variables we
wrote as $(\rho, s, v, w)$ in eq \eqref{eq_odex}. Rearranging these  coordinates in the order $(s, v, \rho, w)$,  
viewing $(s,v) \in \R^m, (\rho, w) \in \R^k$ (with $k = m$), and referring to the discussion of the linearization structure 
of $X$ following \eqref{eq:tgtSpace}, we see that $X$ satisfies the hypothesis of theorem
\ref{th_analyticlinearization} with $U = \{(s,v) \in \E \times \R: s \notin \Delta,  v \ne 0 \}$, and with $\lambda(s, v) = -v$.
Theorem \ref{th_analyticlinearization} now supplies the   analytic conjugation to $X_N$ for $X$.    
But  theorem \ref{th_analyticlinearizationv1}  claims the analytic  conjugation  for $X^{newt}$ not  for $X$.  To finish off,
observe that the extended phase space $P$  on which $X^{newt}$ lives is invariant under the flow of $X$, 
and hence $\Phi^{-1} (P)$ is invariant under the flow of $X_N$.    Restricting $X_N$ and   $\Phi$ to $\Phi^{-1}(P)$
yields the claimed conjugacy.
\end{proof}

\begin{proof} 
[Proof of   \ref{th_analyticlinearization}.]

Decompose $\CE$ into $\CE_+$ and $\CE_-$ with  $\CE_- = \{ p  \in \CE : \lambda (p) < 0 \}$
and  $\CE_+ = \{ p  \in \CE : \lambda (p) > 0 \}$.  (The $+$ and $-$ subscripts are for forward-time attracting
and backward time attracting.)  We give the proof for the open component  $\CE_+$ of $\CE$.  
The proof for  $\CE_-$   follows a symmetrical proof with stable manifold replaced by unstable
and $t \to + \infty$ by $t \to - \infty$.  

The proof proceeds in three steps.  {\bf Step 1.}  Straighten out the local stable manifolds so they are aligned with the fibers of $N$.
{\bf Step 2.}  Work within each fiber $N_p$ and linearize the vector field there insuring that the linearization is also analytic in $p$.
{\bf Step 3.}  Use uniqueness of the linearization from step 1 and 2 to insure that the linearizing transformation is defined in a whole 
\nbhd of $\CE_+$. 

{\bf Step 1.}  [Straightening out the stable manifolds.]  The generalized eigenspace for $\lambda(p)$ is $N_p = L(p)( \V)$
and must be the tangent space at $p$  to the local stable manifold $W(p): = W^s _{loc} (p)$
passing through   $p \in \CE_+$. 
Now $\V = \R^m \oplus N_p$.  
 According to the usual stable manifold theorem,    $W(p)$
is the graph of a    smooth  function $\psi_p: (N_p,0) \to \R^m$ with $\psi_p (0) = p$ and $d \psi_p (0) = 0$.  Here, and in the remainder of this section,
the broken arrow notation is used to  indicate that the domain of the function is an open \nbhd of $p$ and not all of $N_p$.

We need  to upgrade $v \to \psi_p (v) $ so as  to be analytic  not just smooth, and    analytic  in both $v$ and  $p$.
To this end, look at  \cite[Chapter 13, Theorem~4.1, also  Exercise~4.11]{Coddington},   where this work is essentially done. 
The  Perron-style proof of the stable manifold  theorem there  proceeds by constructing 
a  nonlinear integral operator  $F= F_{p, v} $ acting on paths.  The path space it acts on is the
space of  smooth paths $\gamma: [0, \infty) \to \V$ which tend to $p$ as $t \to \infty$ and 
have  initial condition $\gamma(0)$ projecting onto $v \in N_p$.  One proves  that for  $|v|$  small    $F$ is
a contraction mapping. Its  unique  fixed point $\gamma(t; p, v)$ lies in   $W(p)$. 
One has, by definition, that   $\gamma(0; p, v) = p + \psi_p (v) + v$ - yielding the map  $\psi_p: \V \to \R^m$. 
To get   $\psi$   analytic in both $p$ and $v$,   complexify both vectors so that   $p \in \C^m$ ,  $v \in  \N_p ^{\C} = N_p \otimes \C$,  and $p + v \in \V^{\C}$.
Write $Re(p), Im(p) \in \R^m,  Re(v), Im(v) \in N_p$ for their real and imaginary parts.
One verifies that all the properties of $F$ are retained in this complexified setting provided   $|Im(p)| + |Im(v)| < \delta$ is sufficiently small.
One also verifies that the iteration scheme  $\gamma^j (t;  p, v) = F_{(p, v)} ^j (\gamma_0 (t)) \in \V^{\C}$  satisfies   uniform
$C^0$ bounds in this thickened \nbhd  with the $\gamma^j$ analytic in  $t, p, v$ at each step. 
Since   the uniform  limit of  complex  analytic functions is complex analytic, we get that the endpoints,
the $\gamma(0; p, v) = p + \psi_p (v)+ v $ are complex  analytic in $p, v$ in this thickened strip, so, upon
restriction to the real parts, are analytic.  We have our analytic stable manifold,
depending analytically on $p$.  

Now write $\phi_p (v) = p + v + \psi_p (v)$ for $v \in N_p$.  Because $W(p)$ is the graph of $\psi_p$
we have that $\phi_p$ maps a \nbhd of 0 in $N_p$  to a \nbhd of $p$ in   $W(p)$. 
Then  $\Phi: N - \to \V$ by $\Phi(p, v) = \phi_p (v)$  is our desired analytic
 straightening
diffeomorphism, with domain a \nbhd of the zero section of $N$.  Recall that  $N_p$ is tangent to $W(p)$ at $p$, 
and that $\psi_p (0) = 0, d \psi_p (0) = 0$ to  see that   the derivative
of $\Phi$  along the zero section $(p,0)$ of $N$ can be identified with the ``identity'' 
$(h, v)  \mapsto  h+v$ from $\R^m \oplus N_p  \to \V$.
In other words, the derivative is invertible and $\Phi$ agrees with   $i$ to first order along the zero section. (
By the inverse function theorem,  $\Phi$ and $i$ are invertible in a \nbhd of the zero section.) 
  Now  the vector field $X$ is everywhere tangent to the foliation
of a \nbhd of $\CE_+$ by the stable manifolds,  the $W(p)$'s, 
so $\Phi^* X$ must be everywhere tangent to the inverse image of this foliation by $\Phi$,
which is to say,  tangent to the fibers of $N \to \CE_+$.  This means that $\Phi^* X$ is a vertical vector field.
Finally, due to the nature of the linearization of $\Phi$ along the zero section, the linearization of 
$\Phi^* X$ and of $X$ both agree along the zero section, which means that
$$\Phi^* X (p, v) = (0 , L(p) v + g(p, v)),  g(p, v) = O(|v|^2) $$
with $g$ analytic.

{\bf Step 2.} [Linearizing fiber-by-fiber].  Invoke the theorem of  Brushlinskaya. 
\begin{theorem}[Brushlinskaya \cite{Brushlinskaya}]\label{thm:ParameterAnalyticConjugacy}
	Consider a family of analytic vector fields $ X_p $ on $ \R^k $, analytically depending on the parameters $ p\in\Omega \subset \R^m$,
	$\Omega$ open.  Assume that at $ 0 $ and $ p=p_0 \in \Omega $ there is an equilibrium with eigenvalues whose real parts  all have  the same sign. Then, with the aid of a near identity transformation $ \Phi$, analytic in a  neighborhood of $ 0 $ and  depending analytically  on the parameters $ p $ within  a  small neighborhood of $ p_0 $, 
	one can  reduce the family $X_p$ to ``resonant polynomial form'':  $\hat X_p = \Phi^* X_p$ is a   family  of polynomial vector fields whose coefficients depend analytically on the parameters $ p $ and for which  the only  monomials occuring in these polynomials
	are the resonant monomials in the sense of the Poincar\'e-Dulac theory.  
	\end{theorem}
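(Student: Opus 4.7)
The plan is to follow the Poincar\'e--Dulac normal form strategy parameter-by-parameter, exploiting the Poincar\'e-domain hypothesis (all eigenvalues lie in an open half-plane, since they all have real parts of the same sign) to guarantee finiteness of resonances, convergence of the conjugating power series, and analyticity in $p$ via uniform estimates.

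First, I would reduce the linear part $A(p) = DX_p(0)$ to Jordan (or block-diagonal) form by a linear change of coordinates that is analytic in $p$. Because the eigenvalues $\lambda_1(p),\ldots,\lambda_k(p)$ depend continuously on $p$, the Poincar\'e-domain condition at $p_0$ persists on a small neighborhood $V \subset \Om$: there is an open half-plane $H \subset \C$ with $0 \notin \overline{H}$ containing all $\lambda_j(p)$ for $p \in V$. A potential nuisance is that Jordan structure may jump with $p$, but in the Poincar\'e domain one can absorb any non-analytic part of the Jordan decomposition into the nonlinear iteration below; equivalently, one keeps the off-diagonal $(p$-analytic$)$ pieces in $A(p)$ and works with the full operator, since only its resonance structure matters for the small-divisor estimates.

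Next, I would construct the normalizing transformation $\Phi_p = \mathrm{id} + \sum_{r \ge 2} \Phi_p^{(r)}$ order by order. At each homogeneous degree $r \ge 2$ one must solve the homological equation
\begin{equation*}
[A(p),\Phi_p^{(r)}] = R_p^{(r)} - N_p^{(r)},
\end{equation*}
where $R_p^{(r)}$ is the degree-$r$ piece of the transformed field to be removed and $N_p^{(r)}$ is the resonant remainder. The operator $\mathrm{ad}_{A(p)}$ acting on degree-$r$ vector-valued polynomials has spectrum $\{\metric{m,\lambda(p)}-\lambda_j(p) : |m|=r,\ 1 \le j \le k\}$. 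Since all $\lambda_j(p)$ lie in $H$, only finitely many $(m,j)$ give zero (the resonances), and the remaining combinations are bounded below in modulus by $c\,r$ for some $c > 0$ uniformly in $p \in V$. This is the key Poincar\'e-domain estimate and gives $\|\Phi_p^{(r)}\| \le C\,\|R_p^{(r)}\|/r$ uniformly in $p \in V$. Retaining only the finitely many resonant monomials at each order produces the desired \emph{polynomial} normal form $\hat X_p$.

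The main obstacle, and the step that requires the most care, is simultaneous convergence in $v$ and analyticity in $p$. My plan is to use the classical Poincar\'e majorant method (as in Arnold, \emph{Geometrical Methods}, Ch.~5): the uniform lower bound on small divisors allows one to dominate the iterative scheme by a geometric majorant series on a common polydisc $\{|v| < \rho\}\times V$. Because each $\Phi_p^{(r)}$ is obtained by linear algebra whose entries are rational functions of the analytic data $\lambda_j(p)$ and the coefficients of $X_p$, each partial sum is analytic in $(v,p)$; the uniform convergence on the polydisc then yields an analytic limit $\Phi_{(\cdot)}: V \times \{|v| < \rho\} \to \R^k$, and $\hat X_p = \Phi_p^* X_p$ is the required analytic family of polynomial resonant vector fields. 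The near-identity property $\Phi_p = \mathrm{id} + O(|v|^2)$ is built in by starting the iteration at degree $2$.
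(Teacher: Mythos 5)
The paper does not prove this theorem: it is stated with the attribution ``Brushlinskaya \cite{Brushlinskaya}'' and invoked as an external result, after which the text immediately turns to explaining the terms ``resonant monomial'' and ``near-identity'' so as to apply it. There is therefore no in-paper proof for your proposal to be compared against; I can only evaluate your sketch on its own terms.

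Your sketch is a sound outline of the standard parametrized Poincar\'e--Dulac argument in the Poincar\'e domain, and in broad strokes this is surely how the result is proved. Two places are looser than they should be. First, the handling of a jumping Jordan structure: rather than saying the non-analytic part of the Jordan decomposition ``can be absorbed into the nonlinear iteration,'' the cleaner route is to never put $A(p)$ into Jordan form at all. The homological operator $\mathrm{ad}_{A(p)}$ on degree-$r$ vector-valued polynomials has eigenvalues $\langle m,\lambda(p)\rangle-\lambda_j(p)$ regardless of the Jordan structure of $A(p)$; the nilpotent part only contributes a bounded-nilpotency correction on each fixed degree, which does not affect invertibility on the non-resonant subspace or spoil the uniform lower bound on divisors. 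Second, if the resonance set changes with $p$ near $p_0$ (a monomial resonant at $p_0$ may acquire a small nonzero divisor nearby, or conversely), one must fix the resonant subspace once and for all at $p_0$ and decline to eliminate those monomials even where the corresponding divisor is technically nonzero. You do not mention this, and it is the main subtlety in getting a normalizing map that is analytic in $p$, not merely analytic in $v$ for each $p$; without it the fiberwise inverses of $\mathrm{ad}_{A(p)}$ can blow up as $p\to p_0$. Neither issue matters for the application in the paper itself, where $\lambda(p)=(\lambda(p),\dots,\lambda(p))$ is a single repeated eigenvalue, the divisors are $(|Q|-1)\lambda(p)\ne 0$ for $|Q|\ge 2$, and there are no resonances at any $p$ — which is exactly what the paper uses in Step~3 of its proof of Theorem~\ref{th_analyticlinearization} to conclude the local conjugacies are unique and patch.
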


	To use this theorem in our situation we will  need to understand the terms  ``resonant monomials'' and ``near-identity''.
	Fix $ p $. Use the notation $ v^Q = v_1^{q_1}\dots v_k^{q_k}, Q=(q_1,\dots,q_k)\in\N^k $ to describe monomials on
	 $\R^k$. Let $\lambda = (\lambda_1,\dots,\lambda_k) $ be the eigenvalues of the linearization of $X_p $ at $0$. 
	\begin{definition} \label{def:resonant}A  resonant monomial $v^Q$  for $X_p$ is a monomial of degree $2$ or higher for which   $Q\cdot \lambda = \lambda_i$ for some $ i = 1,\dots,k$.
	\end{definition}
	\noindent Observe ``degree 2'' or higher is equivalent to  $ |Q| : = q_1+\dots + q_k \geq 2 $.  
	\begin{definition}A near-identity analytic  transformation
	$\phi$ of $\R^k$ is  an analytic transformation defined near $0$
	and having   convergent power series expansion  $v \mapsto v + \phi_2 (v) + \phi_3 (v) + \ldots$,  with  the $\phi_i (v)$ being  homogeneous vector-valued 
	polynomials of degree $i$.  
	\end{definition} 
	
	In the Poincar\'e-Dulac method, as explained for example in 
	 \cite{Arnold}, one tries to  successively kill all the monomials of degree 2 or higher arising in $X_p$ by appropriately choosing the $q_i$ of $\phi$.
	 All non-resonant  monomials can be killed.  The non-resonant ones, being in the kernel of the ``cohomological operator'' 
	 associated to the process remain.  (See step 3 below for this cohomological operator.) The essence of the Brushlinskaya theorem then is that this process leads to a convergent power series
	 for the map $\phi$ with coefficents depending analytically on $p$.  
	 
	 What does that mean for our specific family? We have  that the vector of eigenvectors  $ \lambda$
	 is $\lambda =  \lambda(p)(1, 1, \ldots, 1)$ at   each point $p$.
	 The resonance condition of definition \ref{def:resonant}  then   reads $|Q| \lambda(p) = \lambda(p)$ which is impossible since $|Q| \ge 2$. 
	 There are no  resonant monomials!  The normal form   $ \hat{X}_p $ is simply  the linearization $v \mapsto L(p) v$ of $ \tilde{X}_p $ for each $ p $. 
	 We now have that $X$ is locally  analytically conjugate to its linearizations $X_N$  in  neighborhoods $ U_p  $ of  each $p \in \CE_+ $.
	 
	 {\bf  Step 3.} [Patching together] We must make  sure that all  these local analytic conjugacies guaranteed by the last two steps 
	  piece together to form a single analytic conjugacy defined in a neighbhorhood of  all of $\CE_+$. 
	 
	 To this end, suppose that $\Psi_1, \Psi_2$ are two linearizations, each defined on its  own  open set $\CU_i \subset N$,
	 this open set containing neighborhoods $U_i \subset \CE$.  For simplicity we will say ``$\Psi_i$ is defined over $U_i$'' to encode
	 this information.   Then $\Psi_1 ^* X = X_N$ holds   over $U_1$
	 while $\Psi_2 ^* X = X_N$ holds over $U_2$.  The map $\Phi= \Psi_2 ^{-1} \circ \Psi_1$ is then a near
	 identity transformation defined over $U_1 \cap U_2$ and satisfying $\Phi^* X_N = X_N$ over $U_1 \cap U_2$. 
	 $\Phi$ must preserve the fibers of $N \to \CE_+$ , for otherwise upon pull-back it would add ``tangential terms''
	 to $X_N$. Thus we have 
	  $\Phi (p,v) = (p, v + h(p, v)))$. Because $\Psi_1$ and $\Psi_2$ are both near-identity, so is 
	  $\Phi$ which means that when expanded as a power series $h(p,v)$ consists
	 entirely of terms quadratic and higher in $v$.  Write out its  convergent Taylor series 
	\[ h=h_2+h_3+\dots, \]
	with $ h_i (p, \cdot) : N_p \to N_p$ a homogeneous degree $ i $  vector-valued polynomial.  At the heart of the Poincar\'e-Dulac method is the fact that
	$\Phi^* X_N = X_N$ is equivalent to the (infinite)  system of ``cohomological equations''
	$ [L,h_i]= 0$.  See for instance \cite[Chapter~5]{Arnold} for details.   But the kernel of this cohomological operator
	$h \mapsto [h, L]$ is precisely  the linear span of the resonant monomials.  (To be precise, what  we mean
	by  a `monomial'   $h$ is any $h$ of  the form $h(v) = v^Q e_i$ with
	$e_i$  one of the  standard basis  elements for $\R^k$.)    We have seen that in our case there are no resonant monomials: this kernel is zero.
	Thus $h_i = 0$, $i =2,3,\ldots$ and hence  $\Phi = Id$ so that  $\Psi_1 = \Psi_2$ over $U_1 \cap U_2$. 
	Thus there is a single $\Psi: N - \to \V$,  defined over $\CE_+$,  whose restrictions to the various open
	sets  $U_p \subset \CE$ as per step 2  are the analytic near-identity diffeomorphisms  guaranteed by Brushlinskaya.
	
\end{proof}

\bibliographystyle{abbrv}

\bibliography{RefScattering}

\end{document}